\numberwithin{equation}{section}
\shorttitle{Kesten's bound on the real line and its multi-dimensional analogues}
\newcommand{\eps}{\varepsilon}
\newcommand{\la}{\lambda}
\newcommand{\ga}{\gamma}
\newcommand{\ka}{\varkappa}
\newcommand{\R}{\mathbb{R}}
\newcommand{\X}{{\mathbb{R}^d}}
\renewcommand{\N}{\mathbb{N}}
\renewcommand{\B}{\mathcal{B}}
\newcommand{\1}{1\!\!1}
\DeclareMathOperator*{\essinf}{ess\,inf}
\newcommand\Dt[1][d]{{\mathcal{D}_{#1}}}
\newcommand\D[1][d]{{\widetilde{\mathcal{D}}_{#1}}}
\renewcommand\S[1][]{\mathcal{S}_{\mathrm{reg}\ifthenelse{\equal{#1}{}}{}{,}#1}}
\newcommand\St[1][d]{\widetilde{\mathcal{S}}_{\mathrm{reg},#1}}
\begin{document}

\title{Kesten's bound for sub-exponential densities on the real line and its multi-dimensional analogues}

\authorone[Swansea University]{Dmitri Finkelshtein}
\addressone{Department of Mathematics, Swansea University, Singleton Park, Swansea SA2~8PP, U.K. ({\tt d.l.finkelshtein@swansea.ac.uk})} 
\authortwo[Gran Sasso Science Institute]{Pasha Tkachov}
\addresstwo{Gran Sasso Science Institute, Viale Francesco Crispi, 7, 67100 L'Aquila AQ, Italy ({\tt pasha.tkachov@gssi.it})}

\begin{abstract}
We study the tail asymptotic of sub-exponential probability densities on the real line. Namely, we show that the $n$-fold convolution of a sub-exponential probability density on the real line is asymptotically equivalent to this density times $n$. We prove Kesten's bound, which gives a uniform in $n$ estimate of the $n$-fold convolution by the tail of the density. We also introduce a class of regular sub-exponential functions and use it to find an analogue of Kesten's bound for functions on $\X$. The results are applied for the study of the fundamental solution to a nonlocal heat-equation. 
\end{abstract}

\keywords{sub-exponential densities; long-tail functions; heavy-tailed distributions; convolution tails; tail-equivalence; asymptotic behavior}

\ams{60E05}{45M05, 62E20}

\section{Introduction}
Let $F$ be a probability distribution on $\R$. Denote by $\overline{F}(s):=F\bigl((s,\infty)\bigr)$, $s\in\R$ its tail function. For probability distributions $F_1$, $F_2$ on $\R$, their convolution ${F}_1*{F}_2$ has the tail function 
\[
\overline{F_1*F_2}(s)=\int_\R \overline{F}_1(s-\tau)F_2(d\tau)=\int_\R \overline{F}_2(s-\tau)F_1(d\tau),
\]
where $\overline{F}_1, \overline{F}_2$ are the corresponding tail functions of $F_1,F_2$.

If a probability distribution $F$ is concentrated on $\R_+:=[0,\infty)$ and $\overline{F}(s)>0$, $s\in\R$, then, see e.g.~\cite{Chi1964},
\begin{equation}\label{liminfleq2}
\liminf_{s\to\infty} \frac{\overline{F*F}(s)}{\overline{F}(s)}\geq2.
\end{equation}
If, additionally, $F$ is heavy-tailed, i.e.~$\int_\R e^{\la s} F(ds)=\infty$ for all $\la>0$, 
then the equality holds in \eqref{liminfleq2}, see \cite{FoKo2007}. An important sub-class of heavy-tailed distributions concentrated on $\R_+$ constitute \emph{sub-exponential} ones, for which 
\begin{equation}\label{limeq2}
\lim_{s\to\infty} \frac{\overline{F*F}(s)}{\overline{F}(s)}=2.
\end{equation}
Any sub-exponential distribution on $\R_+$ is (right-side) long-tailed on $\R$, see e.g.~\cite{Chi1964}, i.e.~(cf. Definition~\ref{def:longtailedfunc} below)  
\begin{equation}\label{eq:longtailetail}
\lim_{s\to\infty} \frac{\overline{F}(s+t)}{\overline{F}(s)}=1 \quad \text{for each } t>0.
\end{equation}

If distributions $F_1,F_2$ on $\R$ have probability densities $f_1\geq0,f_2\geq0$, with $\int_\R f_1(s)\,ds=\int_\R f_2(s)\,ds=1$, then $F_1*F_2$ has the density 
\[
(f_1*f_2)(s):=\int_\R f_1(s-t)f_2(t)\,dt, \quad s\in\R.
\] 
The density $f$ of a sub-exponential distribution $F$ concentrated on $\R_+$ (i.e.~$f(s)=0$ for $s<0$) is said to be sub-exponential on $\R_+$ if $f$ is long-tailed, i.e.~\eqref{eq:longtailetail} holds with $\overline{F}$ replaced by $f$ (see also Definition~\ref{def:longtailedfunc} below), and, cf.~\eqref{limeq2},
\begin{equation}\label{subexpofdens2}
\lim_{s\to\infty} \frac{(f*f)(s)}{f(s)}=2.
\end{equation}
It can be shown (see e.g. \cite{AFK2003,FKZ2013,Klu1989}) that, in this case, for any $n\in\N$,
\begin{equation}\label{subexpofdensn}
\lim_{s\to\infty} \frac{f^{*n}(s)}{f(s)}=n,
\end{equation}
where $f^{*n}:=f*\ldots*f$ ($n-1$ times). Note that, in general, the density of a sub-exponential distribution concentrated on $\R_+$ even being long-tailed does not need to be a sub-exponential one; the corresponding characterisation can be found in \cite{AFK2003,FKZ2013}, see \eqref{eq:safsfa} below. 
The property \eqref{subexpofdensn} implies, in particular, that, for each $\delta>0$, $n\in\N$, there exists $s_n>0$, such that $f^{*n}(s)\leq (n+\delta)f(s)$ for $s>s_n$. In many situations, it is important to have similar inequalities `uniformly' in $n$, i.e.~on a set independent of $n$. A~possible solution is given by the so-called Kesten's bound, see \cite{Klu1989,AFK2003}: for a bounded sub-exponential density $f$ on $\R_+$ and for any $\delta>0$, there exist $c_\delta,s_\delta>0$, such that
\begin{equation}\label{eq:Kestenbdd}
 f^{*n}(s)\leq c_\delta (1+\delta)^n f(s), \quad s>s_\delta, \ n\in\N.
\end{equation}
For the corresponding results for distributions, see \cite{Chi1964,CNW1973,AN1972,FKZ2013}. Kesten's bounds were used to study series of convolutions of distributions on $\R_+$, $\sum_{n=1}^\infty \la_n F^{*n}$, and of the corresponding densities, $\sum_{n=1}^\infty \la_n f^{*n}$, 
appeared in different contexts: starting from the renewal theory (that was the motivation for the original paper \cite{Chi1964}) to branching age dependent processes, random walks, queue theory, risk theory and ruin probabilities, compound Poisson processes, and the study of infinitely divisible laws, see e.g.~\cite{EKM1997,AFK2003,FKZ2013,Wat2008,Klu1989,BB2008} and the references therein.  

If $F$ is a probability distribution on the whole $\R$, such that $F^+$, given by $F^+(B):=F(B\cap\R_+)$ for all Borel $B\subset\R$, is sub-exponential on $\R_+$, then (see e.g.~\cite[Lemma~3.4]{FKZ2013}) $F$ is long-tailed on $\R$ and \eqref{limeq2} holds. The distributions on $\R$ and their densities were considered by several authors, see \cite{Sgi1990,Sgi1982,RS1999,Wat2008} and others. The reference \cite{Wat2008}, in particular, gives a review of difficulties appeared in the case of the whole $\R$ and closes several gaps in the preceding results. However, even some basic properties of sub-exponential densities on the whole $\R$ remained open. 

\newpage

Namely, in \cite[Lemma 4.13]{FKZ2013}, it was shown that if an integrable on $\R$ function $f$\nopagebreak
\begin{itemize}
  \item is right-side long-tail and, being restricted on $\R_+$ and normalized in $L^1(\R_+)$, satisfies \eqref{subexpofdens2} (we will say then that $f$ is \emph{weekly sub-exponential} on $\R$, cf.~Definition~\ref{def:weaksubexponR} below), and if
  \item the condition 
  \begin{equation}\label{eq:condcond}
  f(s+\tau)\leq K f(s), \quad s>\rho, \ \tau>0
  \end{equation}
  holds, for some $K,\rho>0$ (in particular, if $f$ decays to $0$ at $\infty$, cf.~Definition~\ref{def:taildecreasing}), 
\end{itemize}
then \eqref{subexpofdens2} holds for the original $f$ on $\R$ as well. We generalize this to an analogue of \eqref{subexpofdensn} with a general $n\in\N$. In particular, we prove in Theorem~\ref{thm:maincor} below that
\begin{thm}\label{thm:intro1}
Let $f$  be an integrable weakly sub-exponential on $\R$ function, such that \eqref{eq:condcond} holds. Then $f$, being normalized in $L^1(\R)$, satisfies \eqref{subexpofdensn} for all $n\in\N$.
\end{thm}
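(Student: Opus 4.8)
The plan is to argue by induction on $n$, reducing the general case to the already-known case $n=2$. The statement is trivial for $n=1$, and for $n=2$ it is precisely \eqref{subexpofdens2} on $\R$, which holds under the present hypotheses by \cite[Lemma~4.13]{FKZ2013}. For the inductive step I would isolate the following convolution principle as the key lemma: if $f$ is integrable, long-tailed, satisfies \eqref{eq:condcond}, and fulfils $(f*f)(s)\sim 2f(s)$, and if $g\ge0$ is a probability density (so $\int_\R g=1$) with $g(s)\sim c\,f(s)$ as $s\to\infty$ for some $c\ge0$ and $g$ itself obeys a bound $g(s+\tau)\le K'g(s)$ for large $s$ and all $\tau>0$, then $(f*g)(s)\sim(c+1)f(s)$. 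Granting this, the theorem follows by taking $g=f^{*n}$ and $c=n$, since $f^{*n}*f=f^{*(n+1)}$ and $f^{*n}$ has unit integral.

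Before applying the lemma I must verify that $g=f^{*n}$ meets its hypotheses. The relation $g(s)\sim n f(s)$ is the inductive hypothesis, and it makes $g$ long-tailed because $f$ is. The uniform bound $g(s+\tau)\le K'g(s)$ does not follow merely from asymptotics, but it can be obtained by writing $g(s+\tau)/g(s)=[g(s+\tau)/f(s+\tau)]\cdot[f(s+\tau)/f(s)]\cdot[f(s)/g(s)]$; for $s$ large the two outer factors are close to $n$ and $1/n$ \emph{uniformly} in $\tau\ge0$ (since $s+\tau\to\infty$ together with $s$), while the middle factor is at most $K$ by \eqref{eq:condcond}, yielding a uniform constant $K'$.

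To prove the key lemma I would fix a function $h=h(s)\to\infty$ with $h(s)\le s/2$ for which $f$, and hence $g$ (as $g\sim cf$), is $h$-insensitive, i.e.\ $f(s-t)/f(s)\to1$ uniformly for $|t|\le h(s)$; such $h$ exists for any long-tailed $f$. Writing $(f*g)(s)=\int_\R f(s-t)g(t)\,dt$, I split the integral at $t=h(s)$ and $t=s-h(s)$. On $t\le h(s)$ the $h$-insensitivity of $f$ handles $|t|\le h(s)$ and gives a contribution $\sim f(s)\int_{-h(s)}^{h(s)}g\to f(s)$, while the far part $t<-h(s)$ is dominated by \eqref{eq:condcond} by $Kf(s)\int_{-\infty}^{-h(s)}g=o(f(s))$. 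By the symmetric representation $\int_\R g(s-u)f(u)\,du$, the region $t\ge s-h(s)$ contributes $\sim c\,f(s)$, with its far tail controlled by the bound on $g$ established above.

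The middle region $h(s)\le t\le s-h(s)$, where both arguments tend to infinity, is the main obstacle and is exactly where sub-exponentiality enters. Here I would use $g(s-t)\le C f(s-t)$, valid once $s-t\ge h(s)$ is large (from $g\sim cf$), to dominate the middle integral by $C\int_{h(s)}^{s-h(s)}f(s-t)f(t)\,dt$. This last integral is $o(f(s))$: it equals $(f*f)(s)$ minus its two boundary pieces, and since each boundary piece is $\sim f(s)$ (by the very same $h$-insensitivity and \eqref{eq:condcond} argument) while $(f*f)(s)\sim2f(s)$ by the case $n=2$, the difference is $o(f(s))$. Collecting the three regions gives $(f*g)(s)\sim(c+1)f(s)$ and closes the induction. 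The delicate points throughout are the uniform left-tail control supplied by \eqref{eq:condcond} and the propagation of that bound from $f$ to $f^{*n}$, which together are what make the argument work on all of $\R$ rather than only on $\R_+$.
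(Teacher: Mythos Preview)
Your argument is correct and follows essentially the same route as the paper (Proposition~\ref{prop:almostdream} plus the induction in Theorem~\ref{thm:maincor}): split $(f*g)(s)$ into three regions, handle the wings via $h$-insensitivity together with \eqref{eq:condcond}, and kill the middle by sub-exponentiality. The paper's formulation is slightly cleaner in that it does not impose a quasi-decreasing condition on $g$: on the far tail $u<-h(s)$ one writes $g(s-u)\le(c+\delta)f(s-u)\le(c+\delta)Kf(s)$ directly from $g\sim cf$ and \eqref{eq:condcond} for $f$, so your separate verification of that bound for $f^{*n}$ is unnecessary.
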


Moreover, in Theorem~\ref{thm:KestenonR}, we prove that then \eqref{eq:Kestenbdd} holds as well. Namely, one has the following result.
\begin{thm}\label{thm:intro2}
Let $f$ be a bounded weakly sub-exponential probability density on $\R$, such that \eqref{eq:condcond} holds.
  Then, for each $\delta>0$, there exist $c_\delta, s_\delta>0$, such that \eqref{eq:Kestenbdd} holds.
\end{thm}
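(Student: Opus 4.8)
The plan is to run Kesten's classical recursion in the density setting, tracking the ratio of $f^{*n}$ to $f$ on a half-line. Fix $\delta>0$ and a small $\eps>0$ (to be calibrated so that $1+c\eps\le 1+\delta$ for the absolute constant $c$ produced below), and for a threshold $s_0>\rho$ to be fixed later set
\[
\delta_n:=\sup_{s\ge s_0}\frac{f^{*n}(s)}{f(s)},\qquad n\in\N,
\]
the supremum understood essentially. This is a legitimate quantity: \eqref{eq:condcond} forces $f(s+\tau)\le Kf(s)$ for all $\tau>0$, so $f$ cannot vanish on any right half-line (otherwise its right tail would be identically zero, contradicting heavy-tailedness), whence the ratio is well defined for $s_0>\rho$. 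Finiteness of each $\delta_n$ I would establish \emph{a posteriori} by induction from the recursion below, with base value $\delta_1=1$.

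First I would collect three consequences of the hypotheses, all valid for $s$ beyond some $s_1=s_1(\eps)$: (i) that $f$ is genuinely sub-exponential on $\R$, i.e.\ $(f*f)(s)\le(2+\eps)f(s)$, which is exactly \eqref{subexpofdens2} holding on all of $\R$, the $n=2$ instance of Theorem~\ref{thm:intro1}; (ii) the uniform-on-compacts form of the long-tail property \eqref{eq:longtailetail}, giving $1-\eps\le f(s-u)/f(s)\le 1+\eps$ uniformly for $u$ in any fixed compact set; and (iii) the one-sided control $f(s-u)\le Kf(s)$ for every $u\le 0$ delivered directly by \eqref{eq:condcond} (since then $s-u\ge s>\rho$). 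Combining (ii) and (iii) yields a single bound $f(s-u)\le K'f(s)$ for all $u\le s_0$, with $K'=\max(K,1+\eps)$.

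The heart of the argument is a recursion $\delta_{n+1}\le(1+3\eps)\delta_n+K'$ with constants independent of $n$. Writing $f^{*(n+1)}(s)=\int_\R f^{*n}(s-t)f(t)\,dt$, I split the $t$-integral at $t=s-s_0$. On the far part $\{t\le s-s_0\}$ one has $s-t\ge s_0$, so $f^{*n}(s-t)\le\delta_n f(s-t)$; the remaining integral I rewrite as $(f*f)(s)-\int_{\{t>s-s_0\}}f(s-t)f(t)\,dt$ and bound the subtracted piece \emph{below} — substituting $u=s-t$ and using (ii)--(iii) together with $\int_{-B}^{s_0}f\ge 1-\eps$ for $B,s_0$ large — by $(1-2\eps)f(s)$; with (i) this gives a far-part bound $(1+3\eps)\delta_n f(s)$. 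On the near part $\{t>s-s_0\}$, substituting $u=s-t\le s_0$, using $f(s-u)\le K'f(s)$, and crucially the fact that $f^{*n}$ is a probability density so $\int_\R f^{*n}=1$, the contribution is at most $K'f(s)$, uniformly in $n$. Hence $f^{*(n+1)}(s)\le\big[(1+3\eps)\delta_n+K'\big]f(s)$ for $s\ge s_0$ (with $s_0$ taken $\ge s_1$), which is the recursion. Solving it from $\delta_1=1$ yields $\delta_n\le c_\delta(1+3\eps)^n$, and relabelling $3\eps$ as $\delta$ gives \eqref{eq:Kestenbdd} with $s_\delta=s_0$.

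I expect the delicate step to be the far-part estimate, specifically recovering the coefficient $1$ rather than the naive $2$ in front of $\delta_n$: this is exactly what separates the usable bound $(1+\delta)^n$ from the worthless $2^n$, and it rests on subtracting the near-diagonal mass of the convolution. On $\R$ (as opposed to $\R_+$) this is precisely where \eqref{eq:condcond} is indispensable: to make the subtracted integral recover the \emph{full} unit mass of $f$ one must integrate $u$ over $[-B,s_0]$, which forces control of $f(s-u)$ for arguments $s-u>s$ lying to the right of $s$, and that control is exactly \eqref{eq:condcond}. A secondary, but purely routine, matter is the bookkeeping ensuring a single threshold $s_0$ simultaneously serves the definition of $\delta_n$, the validity of inputs (i)--(iii), and the range in \eqref{eq:Kestenbdd}, which is arranged once $\eps$ is fixed.
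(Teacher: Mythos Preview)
Your direct Kesten recursion is a genuinely different route from the paper's. The paper never tracks $\delta_n$; instead it symmetrises $f$ to a two-sided well-behaved $\tilde b$, sets $\upomega=\tilde b^{*m_0}$ for a large $m_0$ so that $(\tilde b*\upomega)/\upomega$ is uniformly close to~$1$ at both tails, and applies an abstract weight-function bound (if $(a*\phi)/\phi\le\gamma$ everywhere then $a^{*n}\le\gamma^{n-1}\|a\|_\phi\,\phi$) with $\phi=\min\{\lambda,\upomega\}$ to obtain $\tilde b^{*n}\le C(1+\tfrac\delta2)^n\phi$; it then transfers back to $f^{*n}$ via $f\le f\,\1_{(-\infty,-s_1)}+\tilde b$ and a binomial expansion. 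Your approach is more elementary and is precisely what the paper's own remark after the theorem flags as the alternative AFK2003 scheme; it buys transparency at the cost of the paper's generality (their weight-function framework is reused verbatim for the $\X$ results).

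There is, however, one point you underestimate: the ``purely routine'' bookkeeping hides a circular dependence. Property~(ii) on the compact $[0,s_0]$---needed for the near-part upper bound $f(s-u)\le(1+\eps)f(s)$ on $u\in(0,s_0]$ and for the subtracted-piece lower bound on $[-B,s_0]$---holds only for $s\ge s_1(s_0,\eps)$, and you then require the recursion for all $s\ge s_0$, i.e.\ $s_0\ge s_1(s_0,\eps)$. For $f(s)\asymp(1+s)^{-D}$ one finds $s_1(s_0,\eps)\asymp s_0/\eps\gg s_0$, so no single threshold closes the loop. The standard fix is to decouple: fix $A$ with $\int_{-A}^A f\ge1-\eps$, define $\delta_n$ over $[A,\infty)$, and split at $t=s-A$; your recursion inequality then holds for $s\ge T$ with $T=T(A,\eps)>A$ determined by the \emph{fixed} compact $[-A,A]$, and on the finite range $[A,T]$ one uses the crude estimate $f^{*(n+1)}(s)\le\|f\|_\infty$ together with $\essinf_{[A,T]}f>0$ (delivered by \eqref{eq:condcond} via $f(s)\ge K^{-1}\int_s^{s+1}f$) to bound $f^{*(n+1)}/f$ by a constant $M$ there. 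This absorbs into the additive constant of the recursion, and the rest goes through as you wrote.
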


Note that the all `classical' examples of sub-exponential functions satisfy assumptions of Theorems~\ref{thm:intro1} and~\ref{thm:intro2}, see Subsection~\ref{subsec:examples}. 

The multi-dimensional version of the constructions above is much more non-trivial. Currently, there exist at least three different definitions of sub-exponential distributions on $\X$ for $d>1$, see \cite{CR1992,Ome2006,SS2016}. The variety is mainly related to  different possibilities to describe the zones in $\X$ where an analogue of the equivalence \eqref{limeq2} takes place. However, any results about sub-exponential densities in $\X$, $d>1$, seem to be absent at all.
Note also that if, e.g.~$a$ is radially symmetric, i.e.~$a(x)=b(|x|)$, $x\in\X$ (here $|x|$ denotes the Euclidean norm on $\X$) and $b$, being normalized, is a sub-exponential density on $\R_+$, then 
$(a*a)(x):=\int_\X a(x-y)a(y)\,dy=p(|x|)$, $x\in\X$, 
for some $p:\R_+\to\R_+$ (i.e.~$a*a$ is also radially symmetric), however, asymptotic behaviors of $b$ and $p$ at $\infty$ are hardly to be compared. Leaving this problem as on open, we focus in this paper on an analogue of Kesten's bound \eqref{eq:Kestenbdd} in the multi-dimensional case. 

To do this, we introduce a special class $\St[d]$ of regular sub-exponential functions on $\R_+$ (see~Definitions~\ref{def:super-subexp} and~\ref{def:super-subexp-tilde}). Functions from this class are either inverse polynomials (i.e.~\eqref{ass:newA5} holds), or decay at $\infty$ faster than any polynomial (i.e.~\eqref{eq:quicklydecreasing} holds), but slower than any exponential function, with the fastest allowed asymptotic $\exp\bigl(-s(\log s)^{-q}\bigr)$ with $q>1$, cf.~Remark~\ref{rem:exception}. Then, in Corollary~\ref{cor:mainRd}, we show the following result.
\begin{thm}\label{thm:intro3}
Let $a=a(x)$ be a probability density on $\X$, such that $a(x)=b(|x|)$, $x\in\X$, for some $b\in\St[d]$. Then, for each $\delta>0$ and for each $\alpha<1$ close enough~to~$1$, 
\begin{equation}\label{eq:Kestenbddmulti}
 a^{*n}(x)\leq c_{\alpha} (1+\delta)^n a(x)^\alpha, \quad |x|>s_\alpha, \ n\in\N
\end{equation}
for some $c_{\alpha}=c_\alpha(\delta)>0$ and $s_\alpha=s_\alpha(\delta)>0$.
\end{thm}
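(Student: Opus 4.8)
The plan is to deduce the $d$-dimensional estimate \eqref{eq:Kestenbddmulti} from the one-dimensional Kesten bound of Theorem~\ref{thm:intro2} by passing to the radial profile. Since $a$ is radial, polar coordinates reduce everything to the interplay of $b$ with the Jacobian $s^{d-1}$. Accordingly, fix $\alpha<1$ and introduce the one-dimensional function
\[
f_\alpha(s):=m_\alpha^{-1}\, b(s)^\alpha\, s^{d-1},\qquad s>0,\quad m_\alpha:=\int_0^\infty b(s)^\alpha s^{d-1}\,ds,
\]
normalised to a probability density on $\R_+$. The first task is to verify that, for $\alpha$ close enough to $1$, $f_\alpha$ is a bounded weakly sub-exponential density to which Theorem~\ref{thm:intro2} applies. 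In the inverse-polynomial regime \eqref{ass:newA5}, $b(s)\asymp s^{-p}$ with $p>d$, so $f_\alpha(s)\asymp s^{d-1-\alpha p}$ is integrable and long-tailed precisely when $\alpha p>d$, i.e. for every $\alpha\in(d/p,1)$; in the faster-than-polynomial regime \eqref{eq:quicklydecreasing} the factor $s^{d-1}$ is harmless and $f_\alpha$ inherits regular sub-exponentiality from $b^\alpha$ for every $\alpha\in(0,1)$. The class $\St[d]$ is designed so that this holds, and hence $f_\alpha^{*n}(s)\le c_\delta(1+\delta)^n f_\alpha(s)$ for $s>s_\delta$ and all $n\in\N$.

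The heart of the argument is the pointwise \emph{reduction inequality}
\[
a^{*n}(x)\ \le\ C\, r^{-(d-1)}\, f_\alpha^{*n}(r),\qquad r=|x|>s_\alpha,\ n\in\N,
\]
with $C$ independent of $n$, which I would establish by induction on $n$, the two-fold step being the crux. Passing to polar coordinates $y=u\sigma$ turns $a^{*2}(x)$ into $\int_0^\infty u^{d-1}b(u)\bigl(\int_{S^{d-1}}b(|x-u\sigma|)\,d\sigma\bigr)\,du$, so the entire problem collapses to estimating the spherical average $\int_{S^{d-1}}b(|x-u\sigma|)\,d\sigma$. As $b$ is non-increasing, this average concentrates near the direction $\sigma=x/r$, where $|x-u\sigma|\approx|r-u|$; writing $|x-u\sigma|^2=(r-u)^2+2ru(1-\cos\theta)$ and integrating the resulting Laplace-type integral against $d\sigma\asymp\theta^{d-2}\,d\theta$ produces exactly the polynomial factor $r^{-(d-1)}$ together with a one-dimensional kernel in $|r-u|$. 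The passage from $b(r)$ to $b(r)^\alpha$ is what furnishes the slack needed here: splitting $b=b^{\alpha}\cdot b^{1-\alpha}$ and spending the factor $b^{1-\alpha}$ — a positive power of a decreasing function — on the angular direction makes the spherical average summable uniformly in $r$, and this is precisely where the strict inequality $\alpha<1$ is used. At $\alpha=1$ the middle/angular contribution sits at the borderline where the surface factor $r^{d-1}$ is not strictly beaten, which is the very comparison of the radial profile of $a*a$ with $b$ left open in the introduction.

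Granting the reduction inequality, the conclusion follows at once. For $r>\max\{s_\alpha,s_\delta\}$ the one-dimensional Kesten bound applied to $f_\alpha$ gives
\[
a^{*n}(x)\ \le\ C\,r^{-(d-1)} f_\alpha^{*n}(r)\ \le\ C\,c_\delta(1+\delta)^n\, r^{-(d-1)}f_\alpha(r)\ =\ C\,c_\delta\,m_\alpha^{-1}(1+\delta)^n\,b(r)^\alpha,
\]
which is exactly \eqref{eq:Kestenbddmulti} with $c_\alpha:=C\,c_\delta\,m_\alpha^{-1}$ and $s_\alpha$ enlarged if necessary. Should the inductive constant emerge as $C^{n}$ rather than a single $C$, it is absorbed into the geometric factor by replacing $\delta$ with a slightly larger value, exactly as in the one-dimensional proof of Theorem~\ref{thm:intro2}.

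I expect the main obstacle to be the spherical-average estimate and, above all, its uniformity in $n$. In one dimension the convolution runs over a bounded interval and sub-exponentiality alone controls the "middle" region; in $\X$ the sphere of radius $r/2$ carries surface measure of order $r^{d-1}$, and one must show that the angular integration contributes \emph{only} the factor $r^{-(d-1)}$ and the decay defect $b^{1-\alpha}$, with no dependence on $n$. Separating the regimes $u\approx r$ and $u\not\approx r$ of the angular integral, and controlling the accumulation of the constant through the $(n-1)$-fold iteration, is the delicate point; the regularity built into $\St[d]$ — the inverse-polynomial behaviour \eqref{ass:newA5}, or the faster-than-polynomial but slower-than-$\exp\bigl(-s(\log s)^{-q}\bigr)$ decay \eqref{eq:quicklydecreasing} — is precisely what renders these estimates tractable with constants uniform in $n$.
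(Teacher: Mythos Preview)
Your strategy differs from the paper's in a way that creates a real gap. The paper does \emph{not} reduce $a^{*n}$ to a one-dimensional $n$-fold convolution $f_\alpha^{*n}$ and then invoke Theorem~\ref{thm:intro2}. Instead it works directly in $\X$: the abstract iteration of Proposition~\ref{prop:quasiKersten} shows that once a \emph{single} convolution satisfies $(a*\upomega)(x)\le(1+\delta)\,\upomega(x)$ for $\upomega(x)=b(|x|)^\alpha$ and large $|x|$, the bound $a^{*n}\le c_\delta(1+\delta)^n\upomega$ follows at once. All the work (Propositions~\ref{prop:poldecbdd}, \ref{prop:quickerdec} and Lemma~\ref{lem:newcool}) goes into that one-step estimate, and there the crude triangle inequality $|x-y|\ge\bigl||x|-|y|\bigr|$ already reduces the tail region to the one-dimensional inequality \eqref{eq:ineqinsteadsubexp} for $b^\alpha$. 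No $n$-fold reduction, no spherical-average computation, no accumulation of constants.

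Your route, by contrast, stakes everything on the reduction inequality $a^{*n}(x)\le C\,r^{-(d-1)}f_\alpha^{*n}(r)$, and that is where the gap lies. The inductive step does not close as written: after convolving the hypothesis with $a$ and passing to polar coordinates, you need the spherical average $\int_{S^{d-1}}b(|x-u\sigma|)\,d\sigma$ to be dominated pointwise by a multiple of $r^{-(d-1)}f_\alpha(|r-u|)$. But $f_\alpha(s)=m_\alpha^{-1}b(s)^\alpha s^{d-1}$ vanishes at $s=0$ for $d>1$, whereas the spherical average does \emph{not} vanish as $u\to r$ (for $d=3$, say, it equals $\frac{\pi}{ru}\int_{|r-u|}^{r+u}b(t)\,t\,dt$, which tends to a positive multiple of $r^{-2}$). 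So the pointwise comparison fails on the diagonal $u\approx r$, and the induction cannot reproduce the factor $f_\alpha(|r-u|)$. Even if one patches this with a different profile, the constant produced at each step is a fixed geometric quantity (a surface-area factor), not a parameter that can be driven to~$1$. Your fallback of absorbing $C^n$ into $(1+\delta)^n$ therefore does not work: in Theorem~\ref{thm:KestenonR} the analogous absorption succeeds only because $\|b-\tilde b\|_1\le\varepsilon$ is at your disposal, and there is no comparable smallness here. The paper's one-step formulation via Proposition~\ref{prop:quasiKersten} is precisely what sidesteps this accumulation problem.
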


Clearly, $a(x)=o\bigl(a(x)^\alpha\bigr)$, $|x|\to\infty$, for any $\alpha\in(0,1)$, hence the inequality \eqref{eq:Kestenbddmulti} is weaker than \eqref{eq:Kestenbdd} for the case $d=1$.

The results of Corollary~\ref{cor:mainRd} is based on more general Theorem~\ref{thm:mainRd}, which says that if, for some $b\in\St[d]$ and decreasing on $\R_+$ function $p$,
  \begin{equation*}
  a(x)\leq p(|x|), \quad x\in\X, \ \qquad\ \log p (s)\sim \log b(s), \quad s\to\infty,
  \end{equation*}  
then \eqref{eq:Kestenbddmulti} holds with $a(x)$ replaced by $b(|x|)$ in the right hand side.
 
The paper is organized as follows. In Section~\ref{sec:techtoolsonR}, we consider properties of general sub-exponential functions on the real line and prove the results which imply Theorems~\ref{thm:intro1} and~\ref{thm:intro2}. In Section~\ref{sec:regsubexp}, we define and study properties of regular sub-exponential functions on the real line and consider the corresponding examples. In Section~\ref{sec:KbddRd}, we prove Theorem~\ref{thm:intro3} and its generalizations. Finally, in Appendix, we apply the obtained results to the study of a non-local heat equation.

\section{Sub-exponential functions and Kesten's bound on the real line}\label{sec:techtoolsonR}
\begin{defn}\label{def:longtailedfunc}
        A function $b:\R\to\R_+$ is said to be {\em (right-side) long-tailed} if there exists $\rho\geq0$, such that $b(s)>0$, $s\geq\rho$; and, for any $\tau\geq 0$,
                \begin{equation}\label{eq:longtaileddef}
                        \lim_{s\to\infty}\frac{b(s+\tau)}{b(s)}=1.
                \end{equation}
\end{defn}

\begin{rem}
        By \cite[formula (2.18)]{FKZ2013}, the convergence in \eqref{eq:longtaileddef} is equivalent to the locally uniform in $\tau$ convergence, namely, \eqref{eq:longtaileddef} can be replaced by the assumption that, for all $h>0$,
        \begin{equation}\label{eq:longtaileddef-uniform}
                        \lim_{s\to\infty}\sup_{|\tau|\leq h}\biggl\lvert\frac{b(s+\tau)}{b(s)}-1\biggr\rvert= 0.
                \end{equation}
\end{rem}

A long-tailed function has to have a `heavier' tail than any exponential function; namely, the following statement holds.

\begin{lem}[\!\!\!{\cite[Lemma 2.17]{FKZ2013}}]\label{eq:slowlythanexp}
Let $b:\R\to\R_+$ be a long-tailed function. Then, for any $k>0$,   $\lim\limits_{s\to\infty} e^{ks}b(s)=\infty$. 
\end{lem}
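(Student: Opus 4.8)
The plan is to show that the long-tail property forces $b$ to decay more slowly than any geometric sequence, and then to compare this with $e^{ks}$. Fix $k>0$. It suffices to produce, for all large $s$, a lower bound of the form $b(s)\geq C q^{s}$ with a ratio $q$ so close to $1$ that $e^{k}q>1$; multiplying by $e^{ks}$ will then bound the product below by a constant times $(e^{k}q)^{s}$, which tends to $\infty$.

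First I would invoke the locally uniform reformulation \eqref{eq:longtaileddef-uniform} of the long-tail property with $h=1$: given any $\eps\in(0,1)$, there is $S\geq\rho$ such that, for all $s\geq S$ and all $\tau$ with $|\tau|\leq 1$, one has $b(s+\tau)\geq (1-\eps)\,b(s)$; in particular the single step $\tau=1$ gives $b(s+1)\geq(1-\eps)b(s)$ for $s\geq S$. Iterating this one-step inequality is the heart of the argument. For $s\geq S$ write $s=S+n+r$ with $n=\lfloor s-S\rfloor\in\N\cup\{0\}$ and $r\in[0,1)$; applying the step inequality $n$ times and once more for the fractional shift $r\leq 1$ yields
\[
b(s)=b(S+n+r)\geq (1-\eps)^{n}\,b(S+r)\geq (1-\eps)^{n+1}\,b(S).
\]
Since $s\geq S+n$, multiplying by $e^{ks}$ produces
\[
e^{ks}b(s)\geq e^{k(S+n)}(1-\eps)^{n+1}b(S)=\bigl(e^{kS}(1-\eps)b(S)\bigr)\bigl(e^{k}(1-\eps)\bigr)^{n}.
\]

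Finally I would fix the free parameter. Because $k>0$ we have $e^{-k}<1$, so we may choose $\eps\in(0,\,1-e^{-k})$, which makes $e^{k}(1-\eps)>1$. Then the right-hand side above tends to $\infty$ as $n\to\infty$, i.e.\ as $s\to\infty$, giving $\lim_{s\to\infty}e^{ks}b(s)=\infty$, as claimed.

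I do not expect a serious obstacle here: the argument is essentially the observation that a vanishing logarithmic increment (the content of long-tailedness) cannot outrun a linear one. The only point requiring a little care is the treatment of non-integer arguments, handled above by absorbing the fractional part $r\in[0,1)$ into one extra application of the uniform estimate, which is exactly why I invoke the locally uniform form \eqref{eq:longtaileddef-uniform} rather than the pointwise definition \eqref{eq:longtaileddef}.
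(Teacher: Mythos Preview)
Your argument is correct. The paper does not supply its own proof of this lemma; it is quoted verbatim from \cite[Lemma~2.17]{FKZ2013}, so there is nothing in the present paper to compare against. Your route---iterating the one-step lower bound $b(s+1)\geq(1-\eps)b(s)$ obtained from the locally uniform form \eqref{eq:longtaileddef-uniform}, and then choosing $\eps<1-e^{-k}$ so that $e^{k}(1-\eps)>1$---is precisely the standard proof one finds in the cited reference. The only implicit fact you use without comment is $b(S)>0$, which is part of Definition~\ref{def:longtailedfunc} since $S\geq\rho$.
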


The constant $h$ in \eqref{eq:longtaileddef-uniform} may be arbitrary big. It is quite natural to ask what will be if $h$ increases to $\infty$ consistently with $s$.

\begin{lem}[{Cf.~\cite[Lemma 2.19, Proposition 2.20]{FKZ2013}}]
Let $b:\R\to\R_+$ be a long-tailed function. Then there exists a function $h:(0,\infty)\to(0,\infty)$, with
$h(s)<\frac{s}{2}$ and $\lim\limits_{s\to\infty}h(s)=\infty$, such that, cf.~\eqref{eq:longtaileddef-uniform},
\begin{equation}\label{eq:uniformlongtailedR}
  \lim_{s\to\infty}\sup_{|\tau|\leq h(s)}\biggl\lvert\frac{b(s+\tau)}{b(s)}-1\biggr\rvert= 0.
\end{equation}
\end{lem}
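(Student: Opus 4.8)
The plan is to turn the fixed-window uniform convergence \eqref{eq:longtaileddef-uniform} into a growing-window statement by a diagonal construction, letting $h$ be a step function that increases through the integers as slowly as the thresholds coming from \eqref{eq:longtaileddef-uniform} permit. First I would fix notation: for each $m\in\N$ I set
\begin{equation*}
\phi_m(s):=\sup_{|\tau|\leq m}\biggl\lvert\frac{b(s+\tau)}{b(s)}-1\biggr\rvert,
\end{equation*}
which is well defined once $s$ is large enough that $s-m\geq\rho$ (so that every $b(s+\tau)$ with $|\tau|\leq m$ is positive, $b$ being long-tailed). Applying \eqref{eq:longtaileddef-uniform} with the constant $h=m$ gives $\phi_m(s)\to0$ as $s\to\infty$, for each fixed $m$.

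Next I would choose the thresholds. For each $m$ the convergence $\phi_m(s)\to0$ supplies some point beyond which $\phi_m(s)<\frac1m$; I would then enlarge it to obtain a value $S_m$ with $\phi_m(s)<\frac1m$ for all $s\geq S_m$ and, simultaneously, $S_m>2m$, $S_m\geq\rho+m$, and $(S_m)_{m\in\N}$ strictly increasing (only lower bounds on $S_m$ are being imposed, so all these demands are compatible). In particular $S_m\to\infty$. I would then define $h(s):=m$ for $s\in[S_m,S_{m+1})$ and $h(s):=s/4$ for $0<s<S_1$. This $h$ is positive on $(0,\infty)$; for $s\in[S_m,S_{m+1})$ the bound $s\geq S_m>2m$ yields $h(s)=m<s/2$, while for $s<S_1$ one has $h(s)=s/4<s/2$, so $h(s)<\tfrac{s}{2}$ throughout; and since $S_m\to\infty$, the value $h(s)=m$ grows without bound, giving $\lim_{s\to\infty}h(s)=\infty$.

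Finally I would verify \eqref{eq:uniformlongtailedR}. For $s\in[S_m,S_{m+1})$ with $m\geq1$ the choices above make the relevant ratios well defined, and since $h(s)=m$ while $s\geq S_m$,
\begin{equation*}
\sup_{|\tau|\leq h(s)}\biggl\lvert\frac{b(s+\tau)}{b(s)}-1\biggr\rvert
=\sup_{|\tau|\leq m}\biggl\lvert\frac{b(s+\tau)}{b(s)}-1\biggr\rvert
=\phi_m(s)<\frac1m.
\end{equation*}
As $s\to\infty$ the index $m$ with $s\in[S_m,S_{m+1})$ tends to infinity, so the supremum tends to $0$, which is exactly \eqref{eq:uniformlongtailedR}. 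The only genuinely delicate point is the bookkeeping of the thresholds: they must be pushed out far enough to secure both the smallness $\phi_m(s)<\frac1m$ and the geometric constraint $h(s)<\tfrac{s}{2}$ (hence the requirement $S_m>2m$), so that the competing demands of \emph{fast} window growth and \emph{slow} window growth are reconciled. Once the $S_m$ are selected accordingly, every remaining estimate is immediate.
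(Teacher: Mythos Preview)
Your diagonal construction is correct: choosing thresholds $S_m>2m$ with $\phi_m(s)<\tfrac{1}{m}$ for $s\geq S_m$ and defining $h$ as the step function $h(s)=m$ on $[S_m,S_{m+1})$ secures all three required properties, and the bookkeeping you flag (making $S_m$ large enough to enforce both $\phi_m<\tfrac1m$ and $h(s)<\tfrac{s}{2}$) is handled cleanly. The paper itself does not supply a proof of this lemma, deferring instead to \cite[Lemma~2.19, Proposition~2.20]{FKZ2013}; your argument is precisely the standard diagonalisation that underlies those results, so there is no substantive difference to discuss.
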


Following \cite{FKZ2013}, we will say then that $b$ is $h$-insensitive. Of course, for a given long-tailed function $b$, the function $h$ that fulfills \eqref{eq:uniformlongtailedR} is not unique, see also \cite[Proposition~2.20]{FKZ2013}.

The convergence in \eqref{eq:longtaileddef} will not be, in general, monotone in $s$. To get this monotonicity, we consider the following class of functions.

\begin{defn}
    A function $b:\R\to\R_+$ is said to be {\em (right-side) tail-log-convex}, if there exists $\rho>0$, such that $b(s)>0$, $s\geq \rho$, and the function $\log b$ is convex on $[\rho,\infty)$.
\end{defn}
\begin{rem}
  It is well-known that any function which is convex on an open interval is continuous there. Therefore, a tail-log-convex function $b=\exp(\log b)$ is continuous on $(\rho_b,\infty)$ as well.
\end{rem}

\begin{lem}
        Let $b:\R\to\R_+$ be tail-log-convex, with $\rho=\rho_b$. Then, for any $\tau>0$, the function $\frac{b(s+\tau)}{b(s)}$ is non-decreasing in $s\in[\rho,\infty)$.
\end{lem}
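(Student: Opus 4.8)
The plan is to pass to logarithms and reduce the claim to the elementary fact that the increments of a convex function over an interval of fixed length are monotone. Write $g:=\log b$, which by hypothesis is convex on $[\rho,\infty)$ and finite there (since $b(s)>0$ for $s\geq\rho$). As $\exp$ is strictly increasing, $\frac{b(s+\tau)}{b(s)}=\exp\bigl(g(s+\tau)-g(s)\bigr)$ is non-decreasing in $s$ if and only if the function $\phi(s):=g(s+\tau)-g(s)$ is non-decreasing on $[\rho,\infty)$; so it suffices to establish the latter.

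Next, I would fix $\rho\leq s_1<s_2$ and consider the four points $A:=s_1$, $B:=s_1+\tau$, $C:=s_2$, $D:=s_2+\tau$, all lying in $[\rho,\infty)$. The key observation is that $A\leq B\leq D$ and $A\leq C\leq D$, while simultaneously $A+D=B+C=s_1+s_2+\tau$; thus $\{B,C\}$ is a pair contained in $[A,D]$ with the same sum as the extreme pair $\{A,D\}$. Since $\phi(s_1)=g(B)-g(A)$ and $\phi(s_2)=g(D)-g(C)$, the desired inequality $\phi(s_1)\leq\phi(s_2)$ is exactly $g(B)+g(C)\leq g(A)+g(D)$.

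To prove this inequality I would write $B$ and $C$ as complementary convex combinations of the endpoints $A$ and $D$: setting $t:=\frac{B-A}{D-A}\in[0,1]$ (note $D-A=\tau+(s_2-s_1)>0$), one has $B=(1-t)A+tD$ and, because $A+D=B+C$, also $C=A+D-B=tA+(1-t)D$. Convexity of $g$ then gives $g(B)\leq(1-t)g(A)+tg(D)$ and $g(C)\leq t\,g(A)+(1-t)g(D)$; adding these yields $g(B)+g(C)\leq g(A)+g(D)$, as required. Exponentiating recovers the monotonicity of $\frac{b(s+\tau)}{b(s)}$.

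There is no genuine obstacle here beyond bookkeeping; the point worth stressing is that this equal-sum formulation handles uniformly both the case where $[s_2,s_2+\tau]$ is disjoint from $[s_1,s_1+\tau]$ and the case where they overlap (i.e.\ $s_2<s_1+\tau$), so no separate case analysis is needed. As an alternative I would mention only in passing that a convex $g$ has a non-decreasing right derivative $g'_+$, whence $\phi(s_2)-\phi(s_1)=\int_{s_1}^{s_1+\tau}\bigl(g'_+(t+s_2-s_1)-g'_+(t)\bigr)\,dt\geq0$; this is equally short but relies on the differentiability theory of convex functions, whereas the four-point argument uses only the definition of convexity.
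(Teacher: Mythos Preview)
Your proof is correct and is essentially the same as the paper's: both pass to $g=\log b$, write the two ``inner'' points as complementary convex combinations (with parameter $\tau/(\tau+|s_2-s_1|)$) of the two ``outer'' points, apply convexity twice, and add. The only differences are notational---the paper swaps the roles of $s_1$ and $s_2$ and works directly with the four arguments rather than introducing the labels $A,B,C,D$.
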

\begin{proof}
Take any $s_1>s_2\geq\rho$. Set $B(s):=\log b(s)\leq 0$, $s\in[\rho,\infty)$. Then the desired inequality $  \frac{b(s_1+\tau)}{b(s_1)}\geq\frac{b(s_2+\tau)}{b(s_2)}$
  is equivalent to
  $  B(s_1+\tau)+B(s_2)\geq B(s_2+\tau)+B(s_1)$.
  Since $B$ is convex, we have, for $\la=\frac{\tau}{s_1-s_2+\tau}\in(0,1)$,
  \begin{align*}
  B(s_1)=B\bigl(\la s_2+(1-\la)(s_1+\tau)\bigr)\leq \la B(s_2)+(1-\la)B(s_1+\tau), \\
  B(s_2+\tau)=B\bigl((1-\la)s_2+\la(s_1+\tau)\bigr)\leq (1-\la) B(s_2)+\la B(s_1+\tau),
  \end{align*}
  that implies the needed inequality.
\end{proof}

Because of the terminology mentioned in the introduction, we will use the following definition.
\begin{defn}\label{def:weaksubexponR}
 We will say that a function $b:\R\to\R_+$ is {\em weakly (right-side)  sub-exponential on~$\R$} if $b$ is long-tailed, $b\in L^1(\R_+)$, and the function 
\begin{equation}\label{eq:defofbplus}
      b_+(s):=\1_{\R_+}(s)\biggl(\int_{\R_+}b(\tau)d\tau\biggr)^{-1}b(s), \quad s\in\R,
  \end{equation}
  satisfies the following asymptotic relation (as $s\to\infty$)
  \begin{equation}
    (b_+*b_+)(s)=\int_\R b_+(s-\tau) b_+(\tau) \,d\tau=\int_0^s b_+(s-\tau) b_+(\tau) \,d\tau\sim 2 b_+(s).
    \label{eq:defofsubexponR+}
  \end{equation}
\end{defn}

The next statement shows that a long-tailed tail-log-convex function is weakly sub-exponential on $\R$ provided that it decays at $\infty$ fast enough.
\begin{lem}[{cf.~\cite[Theorem~4.15]{FKZ2013}}]\label{le:SimplsubexpR+}
  Let $b:\R\to\R_+$ be a long-tailed tail-log-convex function such that $b\in L^1(\R_+)$. Suppose that, for a function $h:(0,\infty)\to(0,\infty)$, with
$h(s)<\frac{s}{2}$ and $\lim\limits_{s\to\infty}h(s)=\infty$, the asymptotic \eqref{eq:uniformlongtailedR} holds, and 
\begin{equation}\label{eq:S0}
  \lim_{s\to\infty}s\, b\bigl(h(s)\bigr)=0.
\end{equation}
Then $b$ is weakly sub-exponential on~$\R$.
\end{lem}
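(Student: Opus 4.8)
The plan is to unwind Definition~\ref{def:weaksubexponR}. Writing $m:=\int_{\R_+}b(\tau)\,d\tau$, so that $b_+=\1_{\R_+}m^{-1}b$, a direct computation gives $(b_+*b_+)(s)=m^{-2}\int_0^s b(s-\tau)b(\tau)\,d\tau$ and $2b_+(s)=2m^{-1}b(s)$ for $s\geq0$; hence the asymptotic relation \eqref{eq:defofsubexponR+} is equivalent to
\[
\int_0^s b(s-\tau)b(\tau)\,d\tau \sim 2m\,b(s), \quad s\to\infty.
\]
Since $b$ is long-tailed, $b(s)>0$ for $s$ large, so I may divide by $b(s)$. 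The device is to split the integral at the insensitivity scale $h(s)$ into the three pieces $\int_0^{h(s)}$, $\int_{h(s)}^{s-h(s)}$ and $\int_{s-h(s)}^{s}$ (note $h(s)<s/2$ guarantees the middle interval is non-degenerate and, as $h(s)\to\infty$, that $h(s)\geq\rho_b$ eventually).

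For the two outer pieces I would use $h$-insensitivity. On $[0,h(s)]$ one has $s-\tau=s+(-\tau)$ with $|-\tau|\leq h(s)$, so \eqref{eq:uniformlongtailedR} yields $b(s-\tau)/b(s)\to1$ uniformly; bounding this ratio between $1\pm\eps$ and using $\int_0^{h(s)}b(\tau)\,d\tau\to m$ gives $\frac{1}{b(s)}\int_0^{h(s)}b(s-\tau)b(\tau)\,d\tau\to m$. The substitution $u=s-\tau$ shows the piece over $[s-h(s),s]$ equals the one over $[0,h(s)]$, so it also tends to $m\,b(s)$. Together the outer pieces contribute $2m\,b(s)$.

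The main obstacle is the middle piece, and this is exactly where tail-log-convexity and \eqref{eq:S0} are needed. Setting $B=\log b$, the map $\tau\mapsto B(\tau)+B(s-\tau)$ is convex on $[h(s),s-h(s)]$ (a sum of $B$ and of $B$ composed with an affine map, both convex once $h(s)\geq\rho_b$), so it attains its maximum at an endpoint; by the symmetry $\tau\leftrightarrow s-\tau$ both endpoints give the same value $B(h(s))+B(s-h(s))$. Exponentiating,
\[
b(\tau)\,b(s-\tau)\leq b\bigl(h(s)\bigr)\,b\bigl(s-h(s)\bigr),\qquad \tau\in[h(s),s-h(s)],
\]
whence $\int_{h(s)}^{s-h(s)}b(s-\tau)b(\tau)\,d\tau\leq s\,b\bigl(h(s)\bigr)\,b\bigl(s-h(s)\bigr)$. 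Dividing by $b(s)$ and using that $b(s-h(s))/b(s)\to1$ by \eqref{eq:uniformlongtailedR} (again with $\tau=-h(s)$) together with $s\,b(h(s))\to0$ from \eqref{eq:S0}, I conclude the middle piece is $o(b(s))$.

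Adding the three contributions gives $\frac{1}{b(s)}\int_0^s b(s-\tau)b(\tau)\,d\tau\to 2m$, which is the required relation; since $b$ is long-tailed and lies in $L^1(\R_+)$ by hypothesis, all conditions of Definition~\ref{def:weaksubexponR} are met and $b$ is weakly sub-exponential on $\R$. The only delicate point is the convexity bound on the central region, where I must check that $[h(s),s-h(s)]$ sits inside the convexity range $[\rho_b,\infty)$ for $s$ large; everything else reduces to a routine $\eps$-estimate.
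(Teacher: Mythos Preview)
The paper does not actually give its own proof of this lemma; it is stated with a reference to \cite[Theorem~4.15]{FKZ2013}. Your argument is correct and is precisely the standard proof from that source: split $\int_0^s$ at $h(s)$ and $s-h(s)$, use $h$-insensitivity \eqref{eq:uniformlongtailedR} on the two outer pieces to extract the contribution $2m\,b(s)$, and control the middle piece by the log-convexity bound $b(\tau)b(s-\tau)\le b(h(s))b(s-h(s))$ combined with \eqref{eq:S0}. The caveat you flag---that $[h(s),s-h(s)]\subset[\rho_b,\infty)$ for large $s$---is exactly the point to check, and it holds since $h(s)\to\infty$; nothing else is missing.
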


\begin{rem}
Let $b:\R\to\R_+$ be a weakly sub-exponential function on~$\R$. Then, by \eqref{eq:defofbplus}, \eqref{eq:defofsubexponR+}, we have
    \begin{equation}\label{eq:tocompare}
    \int_0^s b(s-\tau) b(\tau) \,d\tau\sim 2 \biggl(\int_{\R_+}b(\tau)d\tau\biggr) b(s), \quad s\to\infty.
    \end{equation}
\end{rem}

\begin{defn}
 We will say that a function $b:\R\to\R_+$ is {\em (right-side)  sub-exponential on~$\R$} if $b$ is long-tailed, $b\in L^1(\R)$, and the following asymptotic relation holds, cf.~\eqref{eq:defofsubexponR+}, \eqref{eq:tocompare},
\begin{equation}\label{eq:realsubexponR}
  (b*b)(s)=\int_\R b(s-\tau) b(\tau)\,d\tau\sim 2 \biggl(\int_{\R}b(\tau)d\tau\biggr) b(s), \quad s\to\infty.
\end{equation}
\end{defn}

\begin{rem}
  By \cite[Lemma~4.12]{FKZ2013}, a sub-exponential function on $\R$ is weakly sub-exponential there. The following lemma presents a sufficient condition to get the converse.
\end{rem}

\begin{lem}[cf.~{\cite[Lemma~4.13]{FKZ2013}}]\label{lem:repeatFoss}
  Let $b\in L^1(\R\to\R_+)$ be a weakly sub-exponential function on $\R$. Suppose that there exists $\rho=\rho_b>0$ and $K=K_b>0$ such that
\begin{equation}\label{eq:quasi-decreasing}
  b(s+\tau)\leq K b(s), \quad s>\rho, \ \tau>0.
\end{equation}
Then \eqref{eq:realsubexponR} holds, i.e.~$b$ is 
sub-exponential on $\R$.
\end{lem}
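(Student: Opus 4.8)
The plan is to split the full convolution $(b*b)(s)$ into the part already controlled by the weak sub-exponentiality and two ``boundary'' pieces, and to show that the latter contribute exactly the mass of $b$ carried on the negative half-line. Concretely, I would write $b=g+h$ with $g:=b\,\1_{\R_+}$ and $h:=b\,\1_{(-\infty,0)}$, so that, by commutativity of convolution, $b*b=g*g+2\,g*h+h*h$. For $s>0$ the term $h*h$ vanishes identically, because its integrand requires both $\tau<0$ and $s-\tau<0$, which is impossible for $s>0$. The term $(g*g)(s)=\int_0^s b(s-\tau)b(\tau)\,d\tau$ is precisely the left-hand side of \eqref{eq:tocompare}, so the weak sub-exponentiality of $b$ gives $(g*g)(s)\sim 2\bigl(\int_{\R_+}b\bigr)b(s)$.

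It then remains to analyse the cross term $(g*h)(s)=\int_{-\infty}^0 b(s-\tau)b(\tau)\,d\tau$, which after the substitution $u=-\tau$ becomes $\int_0^\infty b(s+u)\,b(-u)\,du$. The claim I would establish is $(g*h)(s)\sim\bigl(\int_{-\infty}^0 b\bigr)b(s)$. To prove it, fix $A>0$ and split the integral at $u=A$. On $[0,A]$ the long-tailed property in its locally uniform form \eqref{eq:longtaileddef-uniform} gives $b(s+u)/b(s)\to1$ uniformly in $u$, so $b(s)^{-1}\int_0^A b(s+u)b(-u)\,du\to\int_0^A b(-u)\,du$. On $[A,\infty)$ the key assumption \eqref{eq:quasi-decreasing} yields $b(s+u)\leq Kb(s)$ for $s>\rho$ and all $u>0$, whence $b(s)^{-1}\int_A^\infty b(s+u)b(-u)\,du\leq K\int_A^\infty b(-u)\,du$. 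Since $b\in L^1$, the tail $\int_A^\infty b(-u)\,du\to0$ as $A\to\infty$; combining the two estimates and letting $A\to\infty$ pins down both the $\liminf$ and the $\limsup$ of $(g*h)(s)/b(s)$ at $\int_{-\infty}^0 b$.

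Putting the pieces together gives $(b*b)(s)=(g*g)(s)+2(g*h)(s)\sim 2\bigl(\int_{\R_+}b\bigr)b(s)+2\bigl(\int_{-\infty}^0 b\bigr)b(s)=2\bigl(\int_\R b\bigr)b(s)$, which is exactly \eqref{eq:realsubexponR}. The only genuinely delicate point is the cross term $g*h$: the region of large $u$ cannot be handled by long-tailedness alone, since $b(s+u)/b(s)$ is uniformly close to $1$ only on bounded $u$-sets. This is precisely where the hypothesis \eqref{eq:quasi-decreasing} is indispensable, as it supplies the uniform domination needed to discard the contribution of large $u$; everything else reduces to the already-available asymptotic \eqref{eq:tocompare} and a routine truncation argument.
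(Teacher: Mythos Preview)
Your proof is correct. The decomposition $b*b=g*g+2\,g*h+h*h$ with $g=b\1_{\R_+}$, $h=b\1_{(-\infty,0)}$ is clean: $h*h$ vanishes on $(0,\infty)$, $g*g$ is handled directly by \eqref{eq:tocompare}, and your truncation argument for the cross term $g*h$---long-tailedness on $[0,A]$, the hypothesis \eqref{eq:quasi-decreasing} on $[A,\infty)$---is exactly what is needed.

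The paper does not give a self-contained proof of this lemma (it refers to \cite[Lemma~4.13]{FKZ2013}); instead it proves the more general Proposition~\ref{prop:almostdream}, from which the lemma follows by taking $b_1=b_2=b$. That argument proceeds differently: rather than splitting $b$ by the sign of its argument, it invokes the characterisation \eqref{eq:safsfa} of sub-exponential densities via an $h$-insensitivity function and decomposes the convolution integral as $\int_{-\infty}^{h(s)}+\int_{h(s)}^{s-h(s)}$, controlling the middle piece by \eqref{eq:safsfa} and the outer pieces by \eqref{eq:quasi-decreasing} and long-tailedness. Your route is more elementary for the specific case $b_1=b_2=b$ and avoids appealing to \cite[Theorem~4.7]{FKZ2013}, at the cost of not immediately generalising to two different functions $b_1,b_2$ with prescribed tail ratios $c_1,c_2$. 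The paper's decomposition, while heavier, is what allows the inductive step in Theorem~\ref{thm:maincor}.
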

\begin{rem}\label{rem:quasi-decreasing_tends_to_0}
	For $b\in L^1(\R\to\R_+)$, condition \eqref{eq:quasi-decreasing} yields that $\sup_{t\geq s} b(t) \to 0$, $s\to\infty$. In particular $b(s)\to 0$, as $s\to\infty$. 
\end{rem}
An evident sufficient condition which ensures \eqref{eq:quasi-decreasing} is that $b$ is decreasing on $[\rho,\infty)$. Consider the corresponding definition.
\begin{defn}\label{def:taildecreasing}
  A function $b:\R\to\R_+$ is said to be {\em (right-side) tail-decreasing} if there exists a number $\rho=\rho_b\geq0$ such that $b=b(s)$ is strictly decreasing on $[\rho,\infty)$ to~$0$. In particular, $b(s)>0$, $s\geq\rho$.
\end{defn}

The proof of the following useful statement is straightforward.
\begin{prop}\label{prop:useful}
Let $b:\R\to\R_+$ be a tail-decreasing function. Let $h:(0,\infty)\to(0,\infty)$, with $h(s)<\frac{s}{2}$ and $\lim\limits_{s\to\infty}h(s)=\infty$. Then \eqref{eq:uniformlongtailedR} holds, if and only if
\begin{equation}\label{eq:surprise}
\lim_{s\to\infty}\frac{b(s\pm h(s))}{b(s)}=1.
\end{equation}
\end{prop}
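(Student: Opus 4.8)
The plan is to prove Proposition~\ref{prop:useful} by exploiting that tail-decreasingness makes the ratio $\frac{b(s+\tau)}{b(s)}$ monotone in $\tau$ for fixed large $s$, so controlling the endpoints $\tau=\pm h(s)$ automatically controls the whole interval $|\tau|\leq h(s)$. The forward direction is immediate: \eqref{eq:uniformlongtailedR} is a supremum over $|\tau|\leq h(s)$, and since $\pm h(s)$ both satisfy $|\tau|\leq h(s)$, taking these two particular values inside the supremum forces \eqref{eq:surprise}. So the only real content is the converse.

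For the converse, suppose \eqref{eq:surprise} holds. Fix $s$ large enough that $s-h(s)\geq \rho_b$ (possible since $h(s)<\frac{s}{2}$ and $h(s)\to\infty$ forces $s-h(s)\to\infty$), so that $b$ is strictly decreasing on the entire window $[s-h(s),s+h(s)]$. Then for any $\tau\in[-h(s),h(s)]$ monotonicity gives
\[
b\bigl(s+h(s)\bigr)\leq b(s+\tau)\leq b\bigl(s-h(s)\bigr).
\]
Dividing through by $b(s)>0$ yields
\[
\frac{b\bigl(s+h(s)\bigr)}{b(s)}\leq \frac{b(s+\tau)}{b(s)}\leq \frac{b\bigl(s-h(s)\bigr)}{b(s)},
\]
and these bounds are uniform in $\tau$. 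Consequently
\[
\sup_{|\tau|\leq h(s)}\biggl\lvert\frac{b(s+\tau)}{b(s)}-1\biggr\rvert
\leq \max\Biggl\{\, \biggl\lvert\frac{b\bigl(s+h(s)\bigr)}{b(s)}-1\biggr\rvert,\ \biggl\lvert\frac{b\bigl(s-h(s)\bigr)}{b(s)}-1\biggr\rvert \,\Biggr\}.
\]
Letting $s\to\infty$, both quantities on the right tend to $0$ by hypothesis \eqref{eq:surprise}, which gives \eqref{eq:uniformlongtailedR}.

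The argument is essentially elementary and I do not anticipate a genuine obstacle; this is why the statement is flagged as having a straightforward proof. The one point requiring care is the preliminary reduction ensuring that the whole window $[s-h(s),s+h(s)]$ lies in the region of strict monotonicity $[\rho_b,\infty)$, so that the sandwiching by endpoint values is legitimate; this is exactly where the hypotheses $h(s)<\frac{s}{2}$ and $h(s)\to\infty$ are used, and it holds for all sufficiently large $s$, which is all the limit requires.
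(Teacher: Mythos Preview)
Your proof is correct and is precisely the straightforward monotonicity-sandwich argument the paper alludes to (the paper states only that the proof is straightforward and gives no details). One minor remark: to get $s-h(s)\to\infty$ you only need $h(s)<\tfrac{s}{2}$, not $h(s)\to\infty$; otherwise nothing needs adjustment.
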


The next statement and its proof follow ideas of \cite[Lemma 4.13, Lemma 4.9]{FKZ2013}.
\begin{prop}\label{prop:almostdream}
Let $b:\R\to\R_+$ be a weakly sub-exponential function on $\R$, such that \eqref{eq:quasi-decreasing} holds. Let $b_1,b_2\in L^1(\R\to\R_+)$
and there exist $c_1,c_2\geq0$, such that
\begin{equation}\label{eq:cond2}
\lim\limits_{s\to\infty}\frac{b_j(s)}{b(s)}= c_j , \quad j=1,2.
\end{equation}
Then
\begin{equation}\label{eq:dream}
  \lim\limits_{s\to\infty}\frac{(b_1*b_2)(s)}{b(s)}= c_1\int_\R b_2(\tau)\,d\tau +c_2\int_\R b_1(\tau)\,d\tau.
\end{equation}
\end{prop}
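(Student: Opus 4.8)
The plan is to split the convolution integral $(b_1*b_2)(s)=\int_\R b_1(s-\tau)b_2(\tau)\,d\tau$ into regions dictated by an $h$-insensitivity function for the long-tailed $b$. Using the existence of such a function (cf.~\eqref{eq:uniformlongtailedR}), I first fix $h\colon(0,\infty)\to(0,\infty)$ with $h(s)<s/2$, $h(s)\to\infty$, and $\sup_{|\tau|\le h(s)}\lvert b(s-\tau)/b(s)-1\rvert\to0$. Since $h(s)<s/2$, the real line decomposes (up to endpoints, which carry no measure) into the five $\tau$-regions $(-\infty,-h(s)]$, $[-h(s),h(s)]$, $(h(s),s-h(s))$, $[s-h(s),s+h(s)]$, and $[s+h(s),\infty)$, and I estimate each contribution separately as $s\to\infty$. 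Throughout, \eqref{eq:cond2} lets me bound $b_j(u)\le(c_j+1)b(u)$ for all $u$ beyond a fixed threshold, which I combine with the quasi-decreasing bound \eqref{eq:quasi-decreasing} on $b$.

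On the bulk region $[-h(s),h(s)]$ the argument $s-\tau$ stays above $s/2\to\infty$, so $b_1(s-\tau)/b(s-\tau)\to c_1$ uniformly while $b(s-\tau)/b(s)\to1$ uniformly by $h$-insensitivity; hence $b_1(s-\tau)\sim c_1 b(s)$ uniformly there and, since $b_2\in L^1(\R)$, $\int_{-h(s)}^{h(s)}b_1(s-\tau)b_2(\tau)\,d\tau\sim c_1 b(s)\int_\R b_2(\tau)\,d\tau$. The symmetric substitution $\tau\mapsto s-\tau$, which interchanges the roles of $b_1$ and $b_2$, shows that the region $[s-h(s),s+h(s)]$ contributes $\sim c_2 b(s)\int_\R b_1(\tau)\,d\tau$. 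For the two outer tails I use domination: on $(-\infty,-h(s)]$ one has $s-\tau=s+(-\tau)$ with $-\tau>h(s)>0$, so \eqref{eq:quasi-decreasing} gives $b(s-\tau)\le Kb(s)$ and thus $b_1(s-\tau)\le(c_1+1)Kb(s)$, whence this region is bounded by $(c_1+1)Kb(s)\int_{-\infty}^{-h(s)}b_2(\tau)\,d\tau=o(b(s))$ because $b_2\in L^1(\R)$ and $h(s)\to\infty$; the region $[s+h(s),\infty)$ is handled symmetrically using $b_1\in L^1(\R)$.

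The decisive step is the middle diagonal region $(h(s),s-h(s))$, where both $\tau$ and $s-\tau$ tend to infinity. Here I dominate $b_1(s-\tau)b_2(\tau)\le(c_1+1)(c_2+1)\,b(s-\tau)b(\tau)$ and reduce the claim to showing $\int_{h(s)}^{s-h(s)}b(s-\tau)b(\tau)\,d\tau=o(b(s))$. Writing this as $\int_0^s-\int_0^{h(s)}-\int_{s-h(s)}^s$, I use that weak sub-exponentiality in the form \eqref{eq:tocompare} gives $\int_0^s b(s-\tau)b(\tau)\,d\tau\sim 2\bigl(\int_{\R_+}b\bigr)b(s)$, while each end piece satisfies $\int_0^{h(s)}b(s-\tau)b(\tau)\,d\tau\sim\bigl(\int_{\R_+}b\bigr)b(s)$ (and likewise for $\int_{s-h(s)}^s$ by symmetry), again by $h$-insensitivity together with $b\in L^1(\R_+)$. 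The two leading terms therefore cancel the doubled one, so the middle region is indeed $o(b(s))$. This cancellation is precisely where sub-exponentiality (and not merely long-tailedness) is essential, and it is the main obstacle; everything else is uniform-convergence bookkeeping. Summing the five contributions yields \eqref{eq:dream}. (Specializing to $b_1=b_2=b$ and $c_1=c_2=1$ recovers the sub-exponentiality statement of Lemma~\ref{lem:repeatFoss}.)
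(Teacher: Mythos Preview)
Your proof is correct and follows essentially the same approach as the paper's. The paper uses a three-region split (folding $[s-h(s),\infty)$ onto $(-\infty,h(s)]$ via $\tau\mapsto s-\tau$) and invokes \cite[Theorem~4.7]{FKZ2013} directly for the key estimate $\int_{h(s)}^{s-h(s)}b(s-\tau)b(\tau)\,d\tau=o(b(s))$, whereas you use an explicit five-region split and derive that same estimate from \eqref{eq:tocompare} together with $h$-insensitivity; these are cosmetic differences.
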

\begin{proof}
  Since $b_+$, given by \eqref{eq:defofbplus}, is long-tailed, and \eqref{eq:defofsubexponR+} holds, we have, by \cite[Theorem 4.7]{FKZ2013}, that there exists an increasing function $h:(0,\infty)\to(0,\infty)$, such that $h(s)<\frac{s}{2}$, $\lim\limits_{s\to\infty}h(s)=\infty$, and
\begin{equation}\label{eq:safsfa}
  \int_{h(s)}^{s-h(s)}b_+(s-\tau) b_+(\tau)\,d\tau=o(b_+(s)), \quad s\to\infty,
\end{equation}
and, evidently, one can replace $b_+$ by $b$ in \eqref{eq:safsfa}.

 Next, for any $b_1,b_2\in L^1(\R\to\R_+)$, one can easily get that
  \begin{align*}
    (b_1*b_2)(s) &=\int_{-\infty}^{h(s)} \bigl(b_1(s-\tau)b_2(\tau)+b_1(\tau)b_2(s-\tau)\bigr)\,d\tau +\int_{h(s)}^{s-h(s)} b_1(s-\tau)b_2(\tau)\,d\tau.
  \end{align*}
Take an arbitrary $\delta\in(0,1)$. By~\eqref{eq:cond2}, \eqref{eq:uniformlongtailedR}, and \eqref{eq:safsfa} (the latter, with $b_+$ replaced by $b$), there exist $K,\rho>0$, such that \eqref{eq:quasi-decreasing} holds and, for all $s\geq h(\rho)$,
  \begin{gather}\label{eq:newdop}
    \bigl\lvert b_j(s)-c_jb(s)\bigr\rvert+\sup_{|\tau|\leq h(s)}\bigl\lvert b(s+\tau)-b(s)\bigr\rvert+\int_{h(s)}^{s-h(s)}b(s-\tau) b(\tau)\,d\tau\leq\delta b(s),\\
    \int_{-\infty}^{-h(s)} b_j(\tau)d\tau+
    \int_{h(s)}^{\infty} b_j(\tau)d\tau\leq \delta, \quad j=1,2.
    \label{eq:newdop4}
  \end{gather}
Then, by \eqref{eq:quasi-decreasing}, \eqref{eq:newdop}, \eqref{eq:newdop4}, for $s\geq\rho>h(\rho)$,
  \begin{align*}
    &\quad\int_{-\infty}^{-h(s)} \bigl(b_1(s-\tau)b_2(\tau)+b_1(\tau)b_2(s-\tau)\bigr)\,d\tau \leq \delta K (c_1+c_2+2\delta) b(s).
  \end{align*}
Set $B_j:=\int_\R b_j(s)\,ds$, $j=1,2$. By \eqref{eq:newdop}, for $s\geq \rho$, 
  \begin{gather*}
    \quad\biggl\lvert \int_{-h(s)}^{h(s)} b_1(s-\tau)b_2(\tau)\,d\tau
    -c_1 b(s)\int_{-h(s)}^{h(s)} b_2(\tau)\,d\tau\biggr\rvert \leq \delta B_2(1+c_1)b(s),\\
    \int_{h(s)}^{s-h(s)} b_1(s-\tau)b_2(\tau)\,d\tau \leq \delta (c_1+\delta)(c_2+\delta) b(s).
  \end{gather*}

 From the obtained estimates, it is straightforward to get that, for some $M>0$, $\bigl\lvert (b_1*b_2)(s)-(c_1B_2+c_2B_1)b(s)\bigr\rvert\leq \delta M b(s)$ for $s\geq\rho$.
  The latter implies \eqref{eq:dream}.
\end{proof}
\begin{cor}\label{cor:right-side_is_right-side}
	The property of an integrable on $\R$ function to be weekly sub-exponential on $\R$ depends on its tail property  only. Namely, for a weakly sub-exponential on $\R$ function $b\in L^1(\R\to\R_+)$ and for any $c\in L^1(\R\to\R_+)$ and $s_0\in\R$, the function $\tilde{b}(s) = \1_{(-\infty,s_0)}(s) c(s) + \1_{[s_0,\infty)}(s) b(s)$ is weakly sub-exponential on $\R$, cf.~also Theorem~\ref{thm:coolthm} below.
\end{cor}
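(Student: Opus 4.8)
The plan is to reduce the claim to its one nontrivial ingredient, the asymptotic \eqref{eq:defofsubexponR+} for the normalized tail $\tilde b_+$, and then to establish that by a bilinear decomposition of the convolution. First I would dispose of the easy parts. Since $\tilde b$ coincides with $b$ on $[s_0,\infty)$ (and when $s_0\le 0$ one has $\tilde b=b$ on all of $\R_+$, so $\tilde b_+=b_+$ and there is nothing to prove), the long-tail property \eqref{eq:longtaileddef} of $\tilde b$ is inherited from that of $b$, and $\tilde b\in L^1(\R_+)$ follows from $b\in L^1(\R_+)$ together with $c\in L^1(\R)$. Thus I may assume $s_0>0$, and everything reduces to verifying $(\tilde b_+*\tilde b_+)(s)\sim 2\tilde b_+(s)$ as $s\to\infty$.

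Set $v:=b_+$ and $u:=\tilde b_+$; both belong to $L^1(\R\to\R_+)$ and are supported on $\R_+$. Observe that $v$ is itself weakly sub-exponential on $\R$, since $(v)_+=v$ and $v*v\sim 2v$ by \eqref{eq:defofsubexponR+}; in particular $v$ is long-tailed. Writing $\beta:=\bigl(\int_{\R_+}b\bigr)\bigl(\int_{\R_+}\tilde b\bigr)^{-1}\in(0,\infty)$, a direct inspection of \eqref{eq:defofbplus} shows that $u=\beta v$ on $[s_0,\infty)$. Hence the signed function $w:=u-\beta v$ is supported in the bounded set $[0,s_0)$, lies in $L^1$, and satisfies $\int_\R w=\int_\R u-\beta\int_\R v=1-\beta$, since both $u$ and $v$ are normalized.

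Next I would use bilinearity of convolution to write $u*u=\beta^2\,(v*v)+2\beta\,(v*w)+w*w$. Because $w$ is supported in $[0,s_0)$, the term $w*w$ is supported in $[0,2s_0)$ and therefore vanishes identically for $s\ge 2s_0$. For the cross term, long-tailedness of $v$ yields, via the locally uniform convergence \eqref{eq:longtaileddef-uniform} over the bounded range $\tau\in[0,s_0]$, that
\[
(v*w)(s)=\int_0^{s_0} v(s-\tau)w(\tau)\,d\tau=v(s)\int_0^{s_0}w(\tau)\,d\tau+o\bigl(v(s)\bigr)\sim(1-\beta)\,v(s),
\]
where $w\in L^1$ is used to absorb the error uniformly despite $w$ being signed. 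Combining with $(v*v)(s)\sim 2v(s)$ gives $(u*u)(s)\sim\bigl(2\beta^2+2\beta(1-\beta)\bigr)v(s)=2\beta v(s)=2u(s)$, the last equality holding for $s\ge s_0$ because $w$ vanishes there; this is exactly the required $(\tilde b_+*\tilde b_+)(s)\sim 2\tilde b_+(s)$.

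The remaining bookkeeping (positivity of $\beta$, which follows from $\int_{\R_+}b>0$ and $\int_{\R_+}\tilde b\ge\int_{s_0}^\infty b>0$, the exact supports, and the identity $\int_\R w=1-\beta$) is routine. The only step requiring genuine care is the cross term: since $w$ is not sign-definite, the estimate $(v*w)(s)\sim(1-\beta)v(s)$ must be extracted from the uniform form \eqref{eq:longtaileddef-uniform} of long-tailedness together with the integrability of $w$, rather than from any monotonicity. I would stress that, unlike Proposition~\ref{prop:almostdream}, this argument relies only on the long-tailedness of $v=b_+$ and the relation $v*v\sim 2v$, and needs no quasi-decreasing hypothesis \eqref{eq:quasi-decreasing} on $b$.
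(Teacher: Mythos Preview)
Your argument is correct. The paper, by contrast, places this result immediately after Proposition~\ref{prop:almostdream} and intends it as a direct application: take there $b\leftarrow b_+$ and $b_1=b_2=\tilde b_+$, so that $c_1=c_2=\beta$, whence $(\tilde b_+*\tilde b_+)(s)/b_+(s)\to 2\beta$ and therefore $(\tilde b_+*\tilde b_+)(s)\sim 2\tilde b_+(s)$. Since Proposition~\ref{prop:almostdream} is stated under the extra hypothesis \eqref{eq:quasi-decreasing}, one must also observe that this hypothesis enters its proof only through the bound on $\int_{-\infty}^{-h(s)}(\cdots)\,d\tau$, which vanishes automatically here because $\tilde b_+$ is supported on $\R_+$. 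Your route sidesteps this entirely: exploiting that $w=\tilde b_+-\beta b_+$ has \emph{compact} support, the bilinear expansion reduces the whole question to the two raw ingredients $b_+*b_+\sim 2b_+$ and the locally uniform long-tail relation \eqref{eq:longtaileddef-uniform}, with no appeal to the $h$-function machinery or the estimate \eqref{eq:safsfa}. This is a genuinely more elementary and self-contained argument, and---as you rightly stress---it makes transparent that \eqref{eq:quasi-decreasing} plays no role. The paper's approach, on the other hand, packages the computation inside a general tail-convolution lemma that is reused repeatedly afterwards (e.g.\ in Theorem~\ref{thm:maincor}), so the overhead is amortised.
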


Now one gets a generalization of Lemma~\ref{lem:repeatFoss}.

\begin{thm}\label{thm:maincor}
Let $b\in L^1(\R\to\R_+)$ be a weakly sub-exponential on $\R$ function, such that \eqref{eq:quasi-decreasing} holds (for example, let $b$ be tail-decreasing).
Then
  \begin{equation}\label{eq:subexpproperty}
    \lim_{s\to\infty} \frac{b^{*n}(s)}{b(s)}=n\Bigl(\int_\R b(\tau)\,d\tau\Bigr)^{n-1}, \quad n\geq2,
  \end{equation}
  where $b^{*n}(s)=(\underbrace{b*\ldots * b}_n)(s)$, $s\in\R$.
\end{thm}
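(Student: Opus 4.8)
The plan is to argue by induction on $n$, with Proposition~\ref{prop:almostdream} doing essentially all the work at each step. Write $B:=\int_\R b(\tau)\,d\tau$, so that the claim \eqref{eq:subexpproperty} reads $b^{*n}(s)/b(s)\to nB^{n-1}$ as $s\to\infty$.

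For the base case $n=2$, I would apply Proposition~\ref{prop:almostdream} with $b_1=b_2=b$. The hypotheses are met: by assumption $b\in L^1(\R\to\R_+)$ is weakly sub-exponential with \eqref{eq:quasi-decreasing}, and trivially $b_j(s)/b(s)\to 1=:c_j$, $j=1,2$. The conclusion \eqref{eq:dream} then gives $(b*b)(s)/b(s)\to 1\cdot B+1\cdot B=2B$, which is \eqref{eq:subexpproperty} for $n=2$.

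For the inductive step, assume \eqref{eq:subexpproperty} holds for some $n\geq 2$, i.e.~$b^{*n}(s)/b(s)\to nB^{n-1}$. Since $b^{*(n+1)}=b^{*n}*b$, I would apply Proposition~\ref{prop:almostdream} once more, now with $b_1:=b^{*n}$ and $b_2:=b$. Here $b^{*n}\in L^1(\R\to\R_+)$ as a convolution of non-negative integrable functions, and Tonelli's theorem gives $\int_\R b^{*n}=B^n$. The ratio hypotheses \eqref{eq:cond2} hold with $c_1=nB^{n-1}$ (the induction hypothesis) and $c_2=1$. Then \eqref{eq:dream} yields
\[
\lim_{s\to\infty}\frac{b^{*(n+1)}(s)}{b(s)}=c_1\int_\R b\,d\tau+c_2\int_\R b^{*n}\,d\tau=nB^{n-1}\cdot B+1\cdot B^n=(n+1)B^n,
\]
which is exactly \eqref{eq:subexpproperty} for $n+1$, completing the induction.

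I do not expect a serious obstacle: the content of the theorem is entirely absorbed into Proposition~\ref{prop:almostdream}, whose formulation with two distinct functions $b_1,b_2$ and two constants $c_1,c_2$ is precisely what makes the induction close. The only points requiring a word of care are (i) checking that $b^{*n}$ remains an admissible choice for $b_1$ at each stage --- non-negativity and integrability are clear, and crucially one does \emph{not} need $b^{*n}$ itself to satisfy \eqref{eq:quasi-decreasing}, since in Proposition~\ref{prop:almostdream} that condition is imposed only on the fixed weakly sub-exponential function $b$ --- and (ii) the elementary identity $\int_\R b^{*n}=B^n$ used to evaluate the second term.
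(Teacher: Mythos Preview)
Your proof is correct and essentially identical to the paper's own argument: both proceed by induction on $n$, using Proposition~\ref{prop:almostdream} with $b_1=b_2=b$ for the base case and with one factor equal to $b$ and the other equal to the lower convolution power for the step. The only difference is cosmetic (the paper takes $b_1=b$, $b_2=b^{*(n-1)}$ rather than your $b_1=b^{*n}$, $b_2=b$), and your explicit remarks that $b^{*n}\in L^1(\R\to\R_+)$, $\int_\R b^{*n}=B^n$, and that \eqref{eq:quasi-decreasing} is only required of $b$ itself are all to the point.
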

\begin{proof}
  Take in Proposition~\ref{prop:almostdream}, $b_1=b_2=b$, i.e.~$c_1=c_2=1$. Then, for $B:=\int_\R b(\tau)\,d\tau$, one gets $b^{*2}(s)\sim 2B b(s)$, $s\to\infty$.
  Proving by induction, assume that $b^{*(n-1)}(s)\sim (n-1)B^{n-2} b(s)$, $s\to\infty$, $n\geq 3$. Take in Proposition~\ref{prop:almostdream}, $b_1=b$, $b_2=b^{*(n-1)}$, $c_1=1$, $c_2=(n-1)B^{n-2} $, then, since $\int_\R b^{*(n-1)}(\tau)\,d\tau =B^{n-1}$, one gets
    $\lim\limits_{s\to\infty}\frac{b^{*n}(s)}{b(s)}=B^{n-1}+B(n-1)B^{(n-2)} =nB^{(n-1)}$.
\end{proof}

Consider now some general statements in the Euclidean space  $\X$, $d\in\N$. Fix the Borel $\sigma$-algebra $\B(\X)$ there. All functions on $\X$ in the sequel are supposed to be $\B(\X)$-measurable. Let $0\leq a\in L^1(\X)$ be a fixed probability density on $\X$, i.e.~
\begin{equation}
\int_\X a(x)\,dx=1. \label{eq:normed}
\end{equation}
 Let $f:\R\to\R$; we will say that the convolution
\[
(a*f)(x):=\int_\X a(x-y)f(y)\,dy, \quad x\in\X
\]
is well-defined if the function $y\mapsto a(x-y)f(y)$ belongs to $L^1(\X)$ for a.a. $x\in\X$. In particular, this holds if $f\in L^\infty(\X)$. 
Next, for a function $\phi:\X\to(0,+\infty)$, we define, for any $f:\X\to\R$,
\begin{equation*}
  \lVert f\rVert_\phi:=\sup_{x\in\X} \frac{|f(x)|}{\phi(x)}\in[0,\infty].
\end{equation*}

\begin{prop}\label{prop:quasiKersten}
 Let a function $\phi:\X\to(0,+\infty)$ be such that $a*\phi$ is well-defined,
 $\lVert a\rVert_\phi<\infty$, and, for some $\ga\in(0,\infty)$,
\begin{equation} \label{eq:ba:Phi_est_suff_cond}
\frac{(a*\phi)(x)}{\phi(x)}\leq\ga, \quad x\in\X.
\end{equation}
Then $a^{*n}(x)\leq \ga^{n-1} \lVert a\rVert_\phi \phi(x)$, $x\in\X$.
\end{prop}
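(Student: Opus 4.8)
The plan is a straightforward induction on $n$, exploiting the monotonicity of convolution against the non-negative kernel $a$ together with the key hypothesis \eqref{eq:ba:Phi_est_suff_cond}. For the base case $n=1$, the claimed inequality reads $a(x)\leq \lVert a\rVert_\phi\,\phi(x)$, which is immediate from the very definition of $\lVert a\rVert_\phi$ as the supremum over $x\in\X$ of $a(x)/\phi(x)$ (recall $a\geq0$).

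For the inductive step, I would assume $a^{*n}(x)\leq \ga^{n-1}\lVert a\rVert_\phi\,\phi(x)$ for all $x\in\X$ and write $a^{*(n+1)}=a*a^{*n}$. Since $a\geq0$, integrating the inductive bound against $a(x-\,\cdot\,)$ gives
\[
a^{*(n+1)}(x)=\int_\X a(x-y)\,a^{*n}(y)\,dy\leq \ga^{n-1}\lVert a\rVert_\phi\int_\X a(x-y)\,\phi(y)\,dy=\ga^{n-1}\lVert a\rVert_\phi\,(a*\phi)(x).
\]
Applying \eqref{eq:ba:Phi_est_suff_cond} to the last factor then yields $a^{*(n+1)}(x)\leq \ga^{n}\lVert a\rVert_\phi\,\phi(x)$, which is exactly the assertion at level $n+1$ and closes the induction.

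The only point that needs care is the well-definedness of the iterated convolutions $a*a^{*n}$ at each stage, so that the identity $a^{*(n+1)}=a*a^{*n}$ and the interchange above are legitimate. This is not a serious obstacle: the inductive hypothesis bounds $a^{*n}(y)$ pointwise by a constant multiple of $\phi(y)$, so the integrand $a(x-y)\,a^{*n}(y)$ is dominated by $\ga^{n-1}\lVert a\rVert_\phi\,a(x-y)\,\phi(y)$, and the latter is integrable in $y$ for a.a.\ $x$ precisely because $a*\phi$ is well-defined by assumption. Hence each $a^{*(n+1)}(x)$ is finite for a.a.\ $x$, and the chain of inequalities carries through. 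In short, the domination built into the inductive hypothesis both validates the convolutions and drives the estimate, so no delicate analysis beyond this bookkeeping is required.
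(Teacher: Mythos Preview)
Your proof is correct and is essentially the same argument as the paper's: the paper packages your inductive step as the norm inequality $\lVert a*f\rVert_\phi\leq\ga\lVert f\rVert_\phi$ for any $f$ with $\lVert f\rVert_\phi<\infty$, and then iterates it starting from $f=a$, which is exactly your pointwise induction rewritten in $\lVert\cdot\rVert_\phi$-language. Your extra remark on well-definedness of the iterated convolutions is a helpful clarification that the paper leaves implicit.
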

\begin{proof}
For any $f:\X\to\R$, with $\lVert f\rVert_\phi<\infty$, we have, for $x\in\X$,
\begin{equation*}
  \biggl\lvert \frac{(a*f)(x)}{\phi(x)}\biggr\rvert\leq \int_{\X}\frac{a(y)\phi(x-y)}{\phi(x)}\frac{|f(x-y)|}{\phi(x-y)}dy\leq \frac{a*\phi(x)}{\phi(x)} \lVert f\rVert_\phi\leq \ga \lVert f\rVert_\phi.
\end{equation*}
In particular, since $\lVert a\rVert_\phi<\infty$, one gets 
$\lVert a*a\rVert_\phi\leq \ga \lVert a\rVert_\phi<\infty$.
Proceeding inductively, one gets $\lVert a^{*n}\rVert_\phi\leq \ga \lVert a*a^{n-1}\rVert_\phi\leq \ga^{n-1}\lVert a\rVert_\phi<\infty$,
that yields the statement.
\end{proof}

\begin{prop}\label{prop:ba:suff_cond1}
  Let a function $\upomega:\X\to(0,+\infty)$ be such that, for any $\la>0$, 
  \begin{equation}\label{eq:defofsetOmegala}
    \Omega_\la:=\Omega_\la(\upomega):=\bigl\{x\in\X: \upomega(x)<\la\bigr\}\neq\emptyset.
  \end{equation}
Suppose further 
\begin{equation} \label{eq:ba:23r23Phi_est_suff_cond}
\eta:=\limsup_{\la\to0+}\sup_{x\in\Omega_\la}\frac{( a *\upomega)(x)}{\upomega(x)}\in(0,\infty).
\end{equation}
Then, for any $\delta\in(0,1)$, there exists $\la=\la(\delta,\upomega)\in(0,1)$, such that \eqref{eq:ba:Phi_est_suff_cond} holds, with
 \begin{equation}\label{eq:defofomegala}
    \phi(x):=\upomega_\la(x):=\min\bigl\{\la,\upomega(x)\bigr\}, \quad x\in\X,
  \end{equation}
and $\ga:=\max\{1,(1+\delta)\eta\}$.
\end{prop}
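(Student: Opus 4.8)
The plan is to exploit two elementary features of the truncated weight $\phi=\upomega_\la=\min\{\la,\upomega\}$: it is bounded above by $\la$ everywhere, and it never exceeds $\upomega$. Together with the normalization $\int_\X a=1$, these two observations control $a*\phi$ from above, and the whole statement reduces to a two-region estimate according to whether $x\in\Omega_\la$ or not.

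First I would record the two pointwise bounds. Since $0\le\phi\le\la$ and $a$ is a probability density, $(a*\phi)(x)\le\la\int_\X a(x-y)\,dy=\la$ for every $x\in\X$; note that $a*\phi$ is well-defined because $\phi\in L^\infty(\X)$. Moreover, since $0\le\phi\le\upomega$ and $a\ge0$, monotonicity of the integral yields $(a*\phi)(x)\le(a*\upomega)(x)$ wherever the right-hand side is defined, in particular on $\Omega_\la$, where the quotient in \eqref{eq:ba:23r23Phi_est_suff_cond} is being controlled.

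Then I would choose $\la$. Writing $g(\la):=\sup_{x\in\Omega_\la}\frac{(a*\upomega)(x)}{\upomega(x)}$, assumption \eqref{eq:ba:23r23Phi_est_suff_cond} reads $\limsup_{\la\to0+}g(\la)=\eta\in(0,\infty)$. Since $\delta\in(0,1)$ and $\eta>0$ force $\eta<(1+\delta)\eta$, the definition of $\limsup$ provides a threshold $\la_0>0$ with $g(\la)\le(1+\delta)\eta$ for all $0<\la<\la_0$; I then fix any $\la\in\bigl(0,\min\{\la_0,1\}\bigr)$, which automatically satisfies $\la\in(0,1)$ as required. One may observe that $\Omega_\la$ shrinks as $\la\downarrow0$, so $g$ is monotone and the $\limsup$ is in fact an ordinary limit, but this refinement is not needed.

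Finally I would split on membership in $\Omega_\la$. If $\upomega(x)\ge\la$ then $\phi(x)=\la$, and the first bound gives $\frac{(a*\phi)(x)}{\phi(x)}\le\frac{\la}{\la}=1\le\ga$. If $\upomega(x)<\la$, i.e.~$x\in\Omega_\la$, then $\phi(x)=\upomega(x)$, and the second bound together with the definition of $g(\la)$ gives $\frac{(a*\phi)(x)}{\phi(x)}\le\frac{(a*\upomega)(x)}{\upomega(x)}\le g(\la)\le(1+\delta)\eta\le\ga$. In both regions the quotient is at most $\ga=\max\{1,(1+\delta)\eta\}$, which is exactly \eqref{eq:ba:Phi_est_suff_cond}. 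There is no genuine obstacle here; the only points requiring care are extracting a usable $\la$ from the $\limsup$ and taking $\ga$ as a maximum, so that the single constant dominates the quotient on both $\Omega_\la$ and its complement.
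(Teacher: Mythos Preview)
Your proof is correct and follows essentially the same approach as the paper: both split into the regions $\Omega_\la$ and its complement, use $(a*\phi)\le\la$ on the complement (from $\phi\le\la$ and $\int a=1$), and on $\Omega_\la$ use $a*\phi\le a*\upomega$ together with the choice of $\la$ from the $\limsup$ condition. The paper phrases the second estimate as $(a*\upomega_\la)=(a*\upomega)-\bigl(a*(\upomega-\upomega_\la)\bigr)$ and then drops the nonnegative term, which is exactly your monotonicity observation $\phi\le\upomega\Rightarrow a*\phi\le a*\upomega$.
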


\begin{proof}
By \eqref{eq:defofomegala}, for an arbitrary $\la>0$, we have $\upomega_\la(x)\leq \la$, $x\in\X$; then $(a*\upomega_\la)(x)\leq \la$, $x\in\X$, as well. In particular, cf.~\eqref{eq:defofomegala},
\begin{equation}\label{eq:ineq1213}
    (a*\upomega_\la)(x)\leq
    \upomega_\la(x), \quad x\in\X\setminus\Omega_\la.
  \end{equation}
Next, by \eqref{eq:ba:23r23Phi_est_suff_cond}, for any $\delta>0$ there exists $\la=\la(\delta)\in(0,1)$ such that
\[
\sup_{x\in\Omega_\la}\frac{( a *\upomega)(x)}{\upomega(x)}-\eta\leq \delta \eta,
\]
in particular, $(a*\upomega)(x)\leq (1+\delta)\eta \upomega(x)=(1+\delta)\eta \upomega_\la(x), \quad x\in\Omega_\la$.
  Therefore,
  \begin{equation}\label{eq:ineq1214}
        (a*\upomega_\la)(x)=(a*\upomega)(x)-\left( a*(\upomega-\upomega_\la) \right)(x)\leq (1+\delta)\eta \upomega_\la(x),
  \end{equation}
  for all $x\in\Omega_\la$; here we used that $\upomega\geq \upomega_\la$. Then \eqref{eq:ineq1213}--\eqref{eq:ineq1214} yield the statement.
\end{proof}

We are ready to prove now Kesten's bound on $\R$.

\begin{thm}\label{thm:KestenonR}
  Let $b\in L^1(\R\to\R_+)$ be a bounded weakly sub-exponential on $\R$ function with $\int_\R b(s)\,ds=1$, such that \eqref{eq:quasi-decreasing} holds. 
  Then, for any $\delta\in(0,1)$, there exist $C_\delta,s_\delta>0$, such that 
\begin{equation}\label{eq:Kb1}
  b^{*n}(s)\leq C_\delta (1+\delta)^{n} b(s), \quad s>s_\delta, n\in\N.
\end{equation}
\end{thm}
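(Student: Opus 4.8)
The plan is to feed the two abstract lemmas of this section into each other. Proposition~\ref{prop:quasiKersten} (with $a=b$, $d=1$) converts a pointwise bound $(b*\phi)/\phi\le\ga$ into the geometric estimate $b^{*n}\le\ga^{n-1}\lVert b\rVert_\phi\,\phi$, while Proposition~\ref{prop:ba:suff_cond1} produces such a $\phi$ in the capped form $\phi=\upomega_\la=\min\{\la,\upomega\}$ out of any weight $\upomega$ whose tail-ratio $\eta$ from \eqref{eq:ba:23r23Phi_est_suff_cond} is finite, the resulting constant being $\ga=\max\{1,(1+\theta)\eta\}$ with $\theta>0$ the free parameter of that proposition. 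Hence the whole statement reduces to exhibiting a single $\upomega$ for which simultaneously: (a) $\eta$ is as close to $1$ as we please, so that $\ga$ can be made $\le1+\delta$; (b) on some half-line $\{s>s_\delta\}$ the cap is inactive, $\upomega(s)=b(s)<\la$, so that the right-hand side $\ga^{n-1}\lVert b\rVert_\phi\,\phi(s)$ becomes $(1+\delta)^{n-1}\lVert b\rVert_\phi\,b(s)$; and (c) $\lVert b\rVert_\phi<\infty$, which only requires $\upomega\ge c\,b$ on the sublevel set where the cap is off.

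For (b) and (c) it is natural to pin $\upomega$ to $b$ far out, say $\upomega=b$ on $[s_0,\infty)$, and to keep $\upomega\ge c\,b$ everywhere. The subtle requirement is (a), and here the naive choice $\upomega=b$ is fatal: by sub-exponentiality (Lemma~\ref{lem:repeatFoss}) $(b*b)/b\to2$, so $\eta=2$ and the base degenerates to $2(1+\delta)$. The precise source of this `$2$' is Proposition~\ref{prop:almostdream}: applied with $b_1=b$, $b_2=\upomega$ and $c_1=c_2=1$ (the latter because $\upomega=b$ on the tail forces $\upomega/b\to1$), formula \eqref{eq:dream} gives $(b*\upomega)(s)/\upomega(s)\to 1+\int_\R\upomega(\tau)\,d\tau$ as $s\to+\infty$, so that the right-tail value of the ratio in \eqref{eq:ba:23r23Phi_est_suff_cond} is exactly $1+\int_\R\upomega$. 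This identity is both the obstruction and the way out: to force $\eta$ down to $1+\eps$ it suffices to make the total mass $\int_\R\upomega$ small \emph{while still keeping $\upomega$ equal to $b$ on the far tail}. Both contributions to the mass shrink independently: the tail piece $\int_{s_0}^\infty\upomega=\int_{s_0}^\infty b$ is small for $s_0$ large, and the remaining (body) piece is made small by deflating $\upomega$ there, e.g.\ taking $\upomega=C^{-1}b$ on $(-\infty,s_0)$ with $C$ large, which still respects $\upomega\ge C^{-1}b$ for (c) and contributes at most $C^{-1}$ to the mass.

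Assembling: given $\delta$, first fix $\eps,\theta>0$ with $(1+\theta)(1+\eps)\le1+\delta$; then choose $s_0$ and $C$ so large that $\int_\R\upomega\le\eps$, whence $\eta\le1+\eps$. Proposition~\ref{prop:ba:suff_cond1}, applied with its internal parameter equal to $\theta$, yields $\la$ and $\phi=\upomega_\la$ with $(b*\phi)/\phi\le\ga=\max\{1,(1+\theta)\eta\}\le1+\delta$, and one checks $\lVert b\rVert_\phi\le\max\{C,\lVert b\rVert_\infty/\la\}<\infty$. Proposition~\ref{prop:quasiKersten} then gives $b^{*n}\le(1+\delta)^{n-1}\lVert b\rVert_\phi\,\phi$ on all of $\R$; finally, choosing $s_\delta$ so large that $b(s)<\la$ (possible since $b(s)\to0$ by Remark~\ref{rem:quasi-decreasing_tends_to_0}), we get $\phi(s)=b(s)$ and hence \eqref{eq:Kb1} with $C_\delta:=\lVert b\rVert_\phi/(1+\delta)$ for $s>s_\delta$.

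The step I expect to be the real obstacle is making the computation of $\eta$ honest, since $\eta$ is a $\limsup_{\la\to0+}$ of a supremum over the entire sublevel set $\Omega_\la=\{\upomega<\la\}$, not just a limit along $s\to+\infty$; I must therefore guarantee that this supremum is asymptotically governed by the right-tail value $1+\int_\R\upomega$ and is not spoiled elsewhere. On the right this is clean: the one-sided bound \eqref{eq:quasi-decreasing} makes $b$ `$K$-almost decreasing' on $(\rho,\infty)$, so that for small $\la$ the set $\Omega_\la\cap[s_0,\infty)$ is a half-line on which the ratio converges to $1+\int_\R\upomega$, while the near-body $[\rho,s_0]$, where $b$ is bounded below by $b(s_0)/K$, drops out of $\Omega_\la$ once $\la$ is small. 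The genuinely delicate part is the far left: integrability of $\upomega$ forces $\upomega\to0$ as $s\to-\infty$, so $\Omega_\la$ always contains a left tail, and there one must bound $(b*\upomega)/\upomega$ without the help of \eqref{eq:quasi-decreasing}. This is where the argument must work hardest---prescribing the decay of $\upomega$ at $-\infty$ so that $\upomega\ge c\,b$, $\upomega\in L^1$, and the left-tail ratio stays $\le1+\int_\R\upomega$ all hold at once; the small left-tail mass of $b\in L^1(\R)$ (which enters Proposition~\ref{prop:almostdream} only through vanishing integrals $\int_{-\infty}^{-h(s)}$) is what ultimately makes this possible, possibly after a reduction via Corollary~\ref{cor:right-side_is_right-side}.
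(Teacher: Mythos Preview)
Your architecture is right and matches the paper: feed Proposition~\ref{prop:ba:suff_cond1} into Proposition~\ref{prop:quasiKersten}. Your mechanism for pushing $\eta$ down to $1$ on the \emph{right} tail---choosing $\upomega$ to coincide with $b$ for $s\ge s_0$ but to have small total mass, so that Proposition~\ref{prop:almostdream} gives $(b*\upomega)(s)/\upomega(s)\to 1+\int_\R\upomega$---is a neat alternative to the paper's device, which instead takes $\upomega=\tilde b^{*m_0}$ and uses $(m_0{+}1)/m_0\to1$.

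The gap is exactly where you flag it, and it is not a detail to be ``worked hard'' through: with the hypotheses given, no choice of $\upomega$ comparable to $b$ on the left can keep $\sup_{\Omega_\la}(b*\upomega)/\upomega$ bounded. Take $b(s)=e^{s}$ for $s<0$ (and any weakly sub-exponential right tail satisfying \eqref{eq:quasi-decreasing}, normalised). With your $\upomega=C^{-1}b$ on $(-\infty,s_0)$ one has, for $s\to-\infty$,
\[
(b*\upomega)(s)\ \ge\ \int_s^0 b(s-\tau)\,\upomega(\tau)\,d\tau
= C^{-1}\int_s^0 e^{s-\tau}e^{\tau}\,d\tau
= C^{-1}|s|\,e^{s}=|s|\,\upomega(s),
\]
so $(b*\upomega)/\upomega\ge|s|\to\infty$ and $\eta=\infty$. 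Replacing $C^{-1}b$ by a symmetrised tail $b(|s|)$ or $b(|s|+s_0)$ runs into the mirror problem: the contribution from the uncontrolled left tail of $b$ (now sitting in the integrand rather than in $\upomega$) produces terms of size $\int_{-\infty}^{-M}b\big/ b(M)$, which the hypotheses do not bound. And bounding $\upomega$ below by a constant on $(-\infty,s_0]$ kills $\upomega\in L^1$, which your use of Proposition~\ref{prop:almostdream} requires. In short, the left tail of $b$ is genuinely wild, and no one-shot choice of $\upomega$ for the original $b$ tames it.

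The paper resolves this not by a cleverer $\upomega$ but by a two-stage argument. First it replaces $b$ by a two-sided nice function $\tilde b$: mirror the right tail to the left ($\tilde b(s)=b(-s)$ for $s<-s_1$) and floor the middle so that $\tilde b$ is bounded below on compacts. Then \emph{both} $\tilde b$ and $\tilde b(-\cdot)$ are weakly sub-exponential (Corollary~\ref{cor:right-side_is_right-side}), so Theorem~\ref{thm:maincor} controls $\tilde b^{*m}/\tilde b^{*k}$ at $\pm\infty$; with $\upomega=\tilde b^{*m_0}$ this gives $\eta\le(1+\eps)$ on both tails, while $\essinf_{[-r,r]}\upomega>0$ keeps $\Omega_\la$ out of the body. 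Propositions~\ref{prop:ba:suff_cond1}--\ref{prop:quasiKersten} then yield Kesten's bound for $\tilde b$. The second stage recovers $b$: writing $b\le b_1+\tilde b$ with $b_1:=b\,\1_{(-\infty,-s_1)}$ of mass $\le\eps/2$, one expands $b^{*n}\le\sum_k\binom{n}{k}b_1^{*k}*\tilde b^{*(n-k)}$, uses the Kesten bound for $\tilde b^{*(n-k)}$ together with \eqref{eq:quasi-decreasing} (since $b_1^{*k}$ lives on the far left), and sums the binomial. Your hint at Corollary~\ref{cor:right-side_is_right-side} is the first half of this; the missing idea is the binomial recombination that transports the bound from $\tilde b$ back to $b$.
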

\begin{proof}
Fix $\delta\in(0,1)$ and  $\eps\in\bigl(0,\delta]$ with $(1+\eps)^3\leq 1+\frac{\delta}{2}$. Let $s_1,\la_1>0$ satisfy
\begin{equation}
\int_{-\infty}^{-s_1} b(s)ds+\int_{s_1}^{\infty} b(s)ds\leq \frac{\eps}{2}, \qquad 4s_1\la_1 \leq \eps.
   \label{eq:small_perturbation_around_infinity}
\end{equation}
Define the following functions, for $s\in\R$,
  \begin{equation}\label{eq:newfuncdef}
  \begin{gathered}
    \tilde{b}(s) := \1_{(-\infty,-s_1)}(s) b(-s)+ \1_{[-s_1,s_1]}(s) \max\{\la_1,b(s)\} + \1_{(s_1,\infty)}(s) b(s),\\
    b_1(s):=\1_{(-\infty,-s_1)}(s) b(s), \qquad a(s):=\frac{\tilde{b}(s)}{\|\tilde{b}\|_1}.
   \end{gathered}
  \end{equation}  
Here and below, $\|\cdot\|_1$ denotes the norm in $L^1(\R)$. Then, by \eqref{eq:small_perturbation_around_infinity}, \eqref{eq:newfuncdef},  $\|b-\tilde{b}\|_1\leq \eps$, and hence
  \begin{equation}
\| \tilde{b}\|_1 \leq 1+\eps.\label{eq:tildebnormest}
\end{equation}

By Corollary \ref{cor:right-side_is_right-side}, both functions $s\mapsto\tilde{b}(s)$ and $s\mapsto\tilde{b}(-s)$ are weakly sub-exponential on $\R$.
Hence by Theorem \ref{thm:maincor}, 
  $\lim\limits_{s\to\pm\infty} \frac{\tilde{b}^{*m}(s)}{\tilde{b}^{*k}(s)} 
    = \frac{m}{k} \|\tilde{b}\|_1^{m-k}$ for any $k,m\in\N$.
  In particular, there exist $m_0\in\N$ and $s_2\geq s_1$, such that, for $\upomega := \tilde{b}^{*m_0}$ and $|s|\geq s_2$,
  \begin{gather}
    \frac{\tilde{b}*\upomega(s)}{\upomega(s)} \leq (1+\eps) \|\tilde{b}\|_1,\label{eq:another_suff_cond} \\
    m_0 (1-\eps) \|\tilde{b}\|_1^{m_0-1} \leq \frac{\upomega(s)}{\tilde{b}(s)} \leq m_0 (1+\eps) \|\tilde{b}\|_1^{m_0-1}. \label{eq:omega_vs_tilde_b}
  \end{gather}

  For an $r>0$, $\essinf\limits_{s\in[-r,r]}\tilde{b}(s)>0$. It is straightforward to check then by induction, that
  \begin{equation}\label{eq:omega_is_separated_from_zero}
    \essinf\limits_{-r\leq s \leq r}\upomega(s)=\essinf\limits_{-r\leq s \leq r}\tilde{b}^{*m_0}(s)>0, \qquad r>0.
  \end{equation}
On the other hand, by Remark \ref{rem:quasi-decreasing_tends_to_0} and \eqref{eq:omega_vs_tilde_b}, we have
  \begin{equation}\label{eq:qrwrwqqwrt34435}
 \lim_{s\to\pm\infty} \upomega(s) = \lim_{s\to\pm\infty} \tilde{b}(s) = 0. 
  \end{equation} 

Hence there exists $\la_2\in (0, \la_1]$, such that for any $\la\in(0,\la_2)$, the set $\Omega_\la$ defined by \eqref{eq:defofsetOmegala} will be a non-empty subset of $(-\infty,-s_\la)\cup(s_\la,\infty)$, where $s_\la\to\infty$, as $\la\searrow 0$. Therefore, by \eqref{eq:another_suff_cond}, the condition \eqref{eq:ba:23r23Phi_est_suff_cond} holds with $a$ given by \eqref{eq:newfuncdef} and $\eta\leq 1+\eps$. Then, by Proposition \ref{prop:ba:suff_cond1}, there exists $\la_3\in(0,\min\{\la_2,1\})$, such that \eqref{eq:ba:Phi_est_suff_cond} holds with 
\[
\phi(s):=\min\{\la_3,\upomega(s)\}, \quad s\in\R, \qquad \gamma:=(1+\eps)^2.
\]
By \eqref{eq:omega_vs_tilde_b}, \eqref{eq:omega_is_separated_from_zero} and since $\tilde{b}$ is bounded, we have $\|\tilde{b}\|_\phi<\infty$. Hence, by Proposition~\ref{prop:quasiKersten} and using that $\gamma \|\tilde{b}\|_1\leq 1+\frac{\delta}{2}$ because of \eqref{eq:tildebnormest} and the choice of $\eps$, one easily gets
\begin{equation*}
    \tilde{b}^{*n}(s) \leq \Bigl(1+\frac{\delta}{2}\Bigr)^{n-1}  \|\tilde{b}\|_\phi \min\{ \la_3, \upomega(s)\}, \qquad s\in\R,\ n\in \N.
\end{equation*}
By \eqref{eq:qrwrwqqwrt34435}, \eqref{eq:omega_vs_tilde_b}, $\|\tilde{b}\|_\phi \min\{ \la_3, \upomega(s)\}\leq C_\delta\tilde{b}(s)=C_\delta b(s)$ for $s>s_3$, with some $s_3\geq s_2$ and $C_\delta=C_\delta(m_0)>0$. As a result,
\begin{equation}\label{eq:foruseful}
    \tilde{b}^{*n}(s) \leq C_\delta \Bigl(1+\frac{\delta}{2}\Bigr)^{n} b(s), \qquad s>s_3,\ n\in \N.
\end{equation}

By \eqref{eq:newfuncdef}, $b\leq b_1+\tilde{b}$ and hence $ b^{*n}\leq \sum_{k=0}^{n} \binom{n}{k}b_1^{*k} * \tilde{b}^{*(n-k)}$, $n\in\N$, pointwise. 
    Since $b_1^{*k}(s)=0$ for all $s\geq -s_1$, $k\in\N$, then, by \eqref{eq:foruseful}, one gets, for $s\geq s_3$,
    \begin{align*}
    &\quad  b_1^{*k} *\tilde{b}^{*(n-k)}(s) = \int_{s+s_1}^\infty b_1^{*k}(s-y) \tilde{b}^{*(n-k)}(y)dy\\
      &\leq C_\delta \Bigl(1+\frac{\delta}{2}\Bigr)^{n-k} \int_{s+s_1}^\infty b_1^{*k}(s-y) \tilde{b}(y)dy =C_\delta\Bigl(1+\frac{\delta}{2}\Bigr)^{n-k} \int_{-\infty}^{-s_1} b_1^{*k}(y)\tilde{b}(s-y)dy,
 \intertext{and by (2.9), (2.25), and (2.27), one can continue, for $s\geq s_\delta:=\max\{s_3,\rho\}$,}
&\leq       C_\delta \Bigl(1+\frac{\delta}{2}\Bigr)^{n-k} \Bigl(\frac{\eps}{2}\Bigr)^k K \tilde{b}(s)  \leq C_\delta  \Bigl(1+\frac{\delta}{2}\Bigr)^{n-k} \Bigl(\frac{\delta}{2}\Bigr)^k b(s),
  \end{align*}  
that yields \eqref{eq:Kb1}.
\end{proof}

\begin{rem}
Following the scheme of the proof for \cite[Proposition~8]{AFK2003}, it may be shown that Kesten's bound \eqref{eq:Kb1} holds under a weaker assummption that $b\in L^1(\R\to\R_+)$ is a bounded on $\R$ function with $\int_\R b(s)\,ds=1$ such that  \eqref{eq:subexpproperty} holds, i.e. 
\[
b^{*n}(x)\sim n b(x), \quad x\to\infty, \ n\geq 2
\]
(recall that, in contrast to \eqref{subexpofdensn} $b$ is not necessary concentrated on $\R_+$).
By Theorem~\ref{thm:maincor}, a sufficient condition for the latter asymptotic relation is that $b$ is weakly sub-exponential (i.e., cf. \eqref{eq:defofbplus}--\eqref{eq:defofsubexponR+}, its normalised restriction $b_+$ on $\R_+$ is subexponential) and the inequality \eqref{eq:quasi-decreasing} holds. 
\end{rem}

\section{Regular sub-exponential densities on $\R$}\label{sec:regsubexp}

We are going to obtain an analogue of Kesten's bound on $\X$ with $d>1$, at least for radially symmetric functions. Our technique will require to deal with functions $b(|x|)^\alpha$, $x\in\X$, where $b$ is a sub-exponential function on $\R_+$ and $\alpha<1$ is close enough to $1$; in particular, we have to be sure that $b^\alpha$ is also sub-exponential on $\R_+$. Moreover, to weaken the condition of radial symmetry, we will allow double-side estimates by functions of the form $p(|x|)b(|x|)$ for appropriate $p$ on $\R_+$ (say, polynomial). Again, we will need have to check whether the functions $pb$ is also sub-exponential on $\R_+$. To check such a stability of the class of sub-exponential on $\R_+$ functions with respect to power and multiplicative perturbations, we have to reduce the class to appropriately regular sub-exponential functions. Then the mentioned stability takes place, see Theorem~\ref{thm:coolthm} and Proposition~\ref{prop:tailprop2cool}. The examples of regular sub-exponential functions are given in Subsection~\ref{subsec:examples}. The analogues of Kesten's bound on $\X$ are considered in Section~\ref{sec:KbddRd}.

\subsection{Main properties}
\begin{defn}\label{def:super-subexp}
  Let $\S$ be the set of all functions $b:\R\to\R_+$ such that
  1)~$b\in L^1(\R_+)$ and $b$ is bounded on $\R$;
  2)~there exists $\rho=\rho_b>1$, such that $b$ is log-convex and strictly decreasing to~$0$ on $[\rho,\infty)$ (i.e.~$b$ is simultaneously tail-decreasing and tail-log-convex), and  $b(\rho)\leq 1$ (without loss of generality);
  3)~there exist $\delta=\delta_b\in(0,1)$ and an increasing function $h=h_b:(0,\infty)\to(0,\infty)$, with
$h(s)<\frac{s}{2}$ and $\lim\limits_{s\to\infty}h(s)=\infty$, such that the asymptotic \eqref{eq:surprise} holds, and, cf.~\eqref{eq:S0},
\begin{equation}\label{eq:Sdelta}
  \lim_{s\to\infty} b\bigl(h(s)\bigr)s^{1+\delta}=0.
\end{equation}
For any $n\in\N$, we denote by $\S[n]$ the subclass of functions $b$ from $\S$ such that
\begin{equation}\label{eq:finiteintd}
        \int_{-\infty}^\rho b(s) \,ds+\int_\rho^\infty b(s) s^{n-1}\,ds<\infty.
    \end{equation}
\end{defn}

\begin{rem}
It is worth noting again that, for a tail-decreasing function, \eqref{eq:surprise} implies that $b$ is long-tailed.
\end{rem}

\begin{rem}\label{rem:tosummarise}
  By Lemma~\ref{le:SimplsubexpR+}, any function $b\in\S$ is weakly sub-exponential on $\R$. Moreover, by~Lemma~\ref{lem:repeatFoss}, any function $b\in\S[1]$ is sub-exponential on $\R$.
\end{rem}

\begin{rem}
Let $b\in\S$, and $s_0>0$ be such that $h(2s_0)>\rho$. Then the monotonicity of $b$ and $h$ implies $b(s)\leq b(h(2s))$, $s>s_0$; and hence, because of \eqref{eq:Sdelta}, for $B:=2^{-1-\delta}$, there exists $s_1\geq s_0$, such that
\begin{equation}\label{eq:Eimpliespolbdd}
  b(s)\leq Bs^{-1-\delta}, \quad s\geq s_1.
\end{equation}
\end{rem}

Below we will show that $\S$ and $\S[n]$, $n\in\N$ are closed under some simple transformations of functions. For an arbitrary function $b\in\S$, we consider the following transformed functions:
1)~for fixed $p>0$, $q>0$, $r\in\R$, we set
  \begin{equation}\label{eq:aff}
    \widetilde{b}(s):=p b(q s+r), \quad s\in\R;
  \end{equation}
2) for a fixed $s_0>0$ and a fixed bounded function $c:\R\to\R_+$, we set
  \begin{equation}\label{eq:cutbyc}
    \breve{b}(s):=\1_{(-\infty,s_0)}(s) c(s)+\1_{[s_0,\infty)}(s)b(s), \quad s\in\R;
  \end{equation}
3) for any $\alpha\in(0,1]$, we denote
  \[
    b_\alpha(s):=\bigl(b(s)\bigr)^\alpha, \quad s\in\R.
  \] 

\begin{thm}\label{thm:coolthm}
  \begin{enumerate}
    \item Let $b\in\S$. Then the functions $\widetilde{b}$ and $\breve{b}$ defined in \eqref{eq:aff} and \eqref{eq:cutbyc}, correspondingly, also belong to $\S$ for all admissible values of their parameters. If, additionally, there exists $\alpha'\in(0,1)$ such that $b_{\alpha'}\in L^1(\R_+)$, then there exists $\alpha_0\in(\alpha',1)$, such that $b_\alpha\in\S$ for all $\alpha\in[\alpha_0,1]$.
    \item Let $b\in\S[n]$ for some $n\in\N$. Then $\widetilde{b}\in\S[n]$. If, additionally, the function $c$ in \eqref{eq:cutbyc} is integrable on $(-\infty,s_0)$, then $\breve{b}\in\S[n]$. Finally, if there exists $\alpha'\in(0,1)$ such that \eqref{eq:finiteintd} holds for $b=b_{\alpha'}$, then there exists $\alpha_0\in(\alpha',1)$, such that $b_\alpha\in\S[n]$ for all $\alpha\in[\alpha_0,1]$. Moreover, in the latter case, there exist $B_0>0$ and $\rho_0>0$, such that, for all $\alpha\in(\alpha_0,1]$,
         \begin{equation}\label{eq:ineqinsteadsubexp}
            \int_\R \bigl(b(s-\tau)\bigr)^\alpha \bigl(b(\tau)\bigr)^\alpha\,d\tau \leq B_0 \bigl(b(s)\bigr)^\alpha, \quad s\geq \rho_0,
        \end{equation}
  \end{enumerate}
 \end{thm}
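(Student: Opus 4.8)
The plan is to verify, transformation by transformation, the defining conditions 1)--3) of Definition~\ref{def:super-subexp}, together with the moment bound \eqref{eq:finiteintd} in the $\S[n]$ case. The affine map $\widetilde b$ of \eqref{eq:aff} and the truncation $\breve b$ of \eqref{eq:cutbyc} are the routine cases; the power $b_\alpha$, and especially the uniform estimate \eqref{eq:ineqinsteadsubexp}, are where the work lies.

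For $\widetilde b(s)=pb(qs+r)$, conditions 1) and 2) are immediate, since a change of variables preserves integrability and boundedness, and $\log\widetilde b=\log p+\log b(q\,\cdot+r)$ shows that log-convexity and strict monotonicity survive precomposition with an increasing affine map (one enlarges $\rho$ to secure $\widetilde b(\rho)\le1$). For condition 3) I take $h_{\widetilde b}(s):=\tfrac1q h_b(qs+r)$, adjusted downward by a bounded amount so that $h_{\widetilde b}(s)<\tfrac s2$; then $q h_{\widetilde b}(s)+r=h_b(qs+r)+O(1)$, so both \eqref{eq:surprise} and \eqref{eq:Sdelta} for $\widetilde b$ follow from those of $b$ by absorbing the fixed shift $r$ through the long-tailedness of $b$ at the diverging point $h_b(qs+r)$ (using \eqref{eq:Eimpliespolbdd} where convenient). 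Finally \eqref{eq:finiteintd} survives because the substitution $u=qs+r$ turns the weight $s^{n-1}$ into a degree-$(n-1)$ polynomial in $u$. For $\breve b$ everything is even easier: on $[\max\{\rho_b,s_0\},\infty)$ one has $\breve b\equiv b$, so conditions 2), 3) and the tail part of \eqref{eq:finiteintd} are inherited verbatim, while boundedness and integrability on $(-\infty,s_0)$ follow from the boundedness of $c$ and, in the $\S[n]$ case, its assumed integrability there.

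The power $b_\alpha=b^\alpha$ is the core. Conditions 1) and 2) hold for all $\alpha\in[\alpha',1]$: on the tail $b\le1$, so $b_\alpha\le b_{\alpha'}$, which yields $L^1$-membership and, in the $\S[n]$ case, the moment bound (using $b_\alpha\le b_{\alpha'}$ on the tail and $b_\alpha\le b_{\alpha'}+b$ on $(-\infty,\rho)$), while $\log b_\alpha=\alpha\log b$ stays convex and strictly decreasing with $b_\alpha(\rho)\le1$. The subtle condition is 3). Here \eqref{eq:surprise} persists with the same $h$, the ratio being merely raised to the power $\alpha$; but \eqref{eq:Sdelta} constrains $\alpha$: from $b(h(s))s^{1+\delta}\to0$ one gets $b_\alpha(h(s))\le s^{-\alpha(1+\delta)}$ eventually, and the exponent $-\alpha(1+\delta)$ lies below $-1$ only when $\alpha(1+\delta)>1$. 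This is exactly why $\alpha_0$ must be chosen in $(\max\{\alpha',\tfrac1{1+\delta}\},1)$; then $\delta':=\alpha_0(1+\delta)-1\in(0,1)$ serves as $\delta_{b_\alpha}$ simultaneously for all $\alpha\in[\alpha_0,1]$, giving $b_\alpha\in\S$ (resp. $\S[n]$).

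The main obstacle is \eqref{eq:ineqinsteadsubexp} with $B_0,\rho_0$ uniform in $\alpha\in(\alpha_0,1]$. Each $b_\alpha$ is individually sub-exponential by Remark~\ref{rem:tosummarise}, so $b_\alpha*b_\alpha\le C(\alpha)b_\alpha$ on some tail, but the asymptotic equivalence controls neither $C(\alpha)$ nor the threshold as $\alpha$ varies, so I would prove the estimate by hand through the three-zone split $\int_\R b_\alpha(s-\tau)b_\alpha(\tau)\,d\tau=\int_{\tau\le h(s)}+\int_{h(s)<\tau<s-h(s)}+\int_{\tau\ge s-h(s)}$. On the middle zone the convexity of $\tau\mapsto\alpha\log b(\tau)+\alpha\log b(s-\tau)$, symmetric about $s/2$, forces the integrand to be maximal at the endpoints, giving a bound $\le s\,b_\alpha(h(s))b_\alpha(s-h(s))$; here \eqref{eq:surprise} yields $b_\alpha(s-h(s))\le 2b_\alpha(s)$ uniformly (as $t^\alpha\le t$ for $t\ge1$), while \eqref{eq:Sdelta} gives $s\,b_\alpha(h(s))\le s^{1-\alpha(1+\delta)}\le s^{1-\alpha_0(1+\delta)}\to0$ uniformly in $\alpha\ge\alpha_0$ --- once more the role of $\alpha_0>\tfrac1{1+\delta}$. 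The two outer zones are handled by the monotonicity of $b$ (so that $b_\alpha(s-\tau)\le b_\alpha(s)$ for $\tau<0$ and $b_\alpha(s-\tau)\le 2b_\alpha(s)$ for $0\le\tau\le h(s)$) together with the uniform integrability estimate $b_\alpha\le b_{\alpha_0}+b\in L^1(\R)$. Carrying every error term uniformly in $\alpha$, and thereby pinning the admissible range to $\alpha>\tfrac1{1+\delta}$, is the heart of the argument; once this is done, $B_0$ and $\rho_0$ are visibly independent of $\alpha$.
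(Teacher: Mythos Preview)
Your argument is correct, and for $\widetilde b$, $\breve b$, and the membership $b_\alpha\in\S$ (resp.~$\S[n]$) it runs parallel to the paper's proof, including the identification of the threshold $\alpha_0>\max\{\alpha',\tfrac1{1+\delta}\}$. Two minor remarks: for the integrability of $b_\alpha$ the paper invokes the log-convexity of $L^p$-norms (interpolation between $L^{\alpha'}$ and $L^1$), whereas your pointwise bound $b_\alpha\le b_{\alpha'}$ on $[\rho,\infty)$ (since $b\le1$ there) is equally valid and arguably simpler.

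The genuine divergence is in the uniform estimate \eqref{eq:ineqinsteadsubexp}. The paper does \emph{not} redo the three-zone analysis: it first observes that $b$ and $b_{\alpha_0}$ both lie in $\S[1]$ and hence are sub-exponential on $\R$ (Remark~\ref{rem:tosummarise}), giving the inequality \eqref{eq:ineqinsteadsubexp} at the two endpoints $\alpha=1$ and $\alpha=\alpha_0$ with a common constant $B_0$ and threshold $\rho_0$. It then interpolates for intermediate $\alpha$ by applying the log-convexity of $L^p$-norms to the function $\tau\mapsto b(s-\tau)b(\tau)$ at fixed $s$, which immediately yields the same $B_0,\rho_0$ for every $\alpha\in(\alpha_0,1)$. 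Your route is more hands-on: you rerun the convolution estimate directly, exploiting tail-log-convexity on the middle zone (so the integrand is maximised at the endpoints $h(s),\,s-h(s)$) and tracking uniformity in $\alpha$ explicitly through $s\,b(h(s))^\alpha\le s^{1-\alpha_0(1+\delta)}\to0$. Both approaches work; the paper's is shorter and reuses the sub-exponential machinery already in place, while yours is self-contained and makes the role of $\alpha_0>\tfrac1{1+\delta}$ transparent without appealing to Remark~\ref{rem:tosummarise}.
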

\begin{proof}
It is very straightforward to check that if $b$ is long-tailed, tail-decreasing and tail-log-convex, then $\widetilde{b},\breve{b},b_\alpha$  also have these properties for all admissible values of their parameters. Let $h:(0,\infty)\to(0,\infty)$ be such that
$h(s)<\frac{s}{2}$, $\lim\limits_{s\to\infty}h(s)=\infty$, and \eqref{eq:surprise} hold. Let also \eqref{eq:Sdelta} hold for some $\delta>0$.

(i) Evidently, both \eqref{eq:surprise} and \eqref{eq:Sdelta} hold, with $b$ replaced by $\breve{b}$. Next, $\breve{b}\in L^1(\R_+)$ and $\breve{b}$ is bounded. Hence $\breve{b}\in\S$. If $b\in\S[n]$ and $c$ is integrable on $(-\infty,s_0)$, then \eqref{eq:finiteintd} holds for $b$ replaced by $\breve{b}$. Cf.~also Corollary~\ref{cor:right-side_is_right-side}.

(ii) Set, for the given $q>0$, $r\in\R$, $\widetilde{h}(s):=\frac{1}{q} h(qs+r)-\frac{r}{2q}\1_{\R_+}(r)$,  $s\in[s_1,\infty)$, where $s_1>0$ is such that $qs_1+r>0$ and $h(qs+r)>\frac{r}{2q}$ for all $s\geq s_1$, and $\widetilde{h}$ is increasing on $(0,s_1)$, such that $\widetilde{h}(s)<\min\bigl\{\frac{s}{2},\widetilde{h}(s_1)\bigr\}$, $s\in(0,s_1)$.
By~Proposition~\ref{prop:useful}, \eqref{eq:surprise} is equivalent to  \eqref{eq:uniformlongtailedR}. Then, by~\eqref{eq:aff}, we have, 
$\sup_{|\tau|\leq \widetilde{h}(s)}\Bigl\lvert \frac{\widetilde{b}(s+\tau)}{\widetilde{b}(s)}-1\Bigr\rvert\to0,$
as $s\to\infty$.
Therefore, again by Proposition~\ref{prop:useful}, \eqref{eq:surprise} holds for $b$ replaced by $\widetilde{b}$.
Next, it is straightforward to get from \eqref{eq:longtaileddef}, \eqref{eq:Sdelta}, that $\widetilde{b}\bigl(\widetilde{h}(s)\bigr) s^{1+\delta} \to 0$, as $s\to\infty$.
Therefore, $\widetilde{b}\in\S$.
Finally, $b\in\S[n]$ for some $n\in\N$, trivially implies $\widetilde{b}\in\S[n]$.

(iii) Evidently, the convergence \eqref{eq:surprise} implies the same one with $b$ replaced by $b_\alpha$, with the same $h$ and for any $\alpha\in(0,1)$.
Next, let $\alpha'\in(0,1)$ be such that $b_{\alpha'}\in L^1(\R_+)$. By the well-known log-convexity of $L^p$-norms (for $p>0$), for any $\alpha\in(\alpha',1)$ and for $\beta:=\frac{\alpha-\alpha'}{\alpha(1-\alpha')}\in(0,1)$, we have $\frac{1}{\alpha}=\frac{1-\beta}{\alpha'}+\beta$ and
\begin{equation}\label{eq:normlogconv}
\lVert b\rVert_{L^\alpha(\R_+)} \leq
\lVert b\rVert_{L^{\alpha'}(\R_+)} ^ {1-\beta}\, \lVert b\rVert_{L^1(\R_+)}^\beta<\infty,
\end{equation}
i.e.~$b_\alpha\in L^1(\R_+)$ for all $\alpha\in(\alpha',1)$.
Take and fix an $\alpha_0\in\Bigl(\max\Bigl\{\alpha',\frac{1}{1+\delta}\Bigr\},1\Bigr)$. Then, for any $\alpha\in
[\alpha_0,1]$, we have that $\delta':=\alpha(1+\delta)-1\in(0,\delta]$, and hence, by~\eqref{eq:Sdelta},
  \[
  \lim_{s\to\infty} b_\alpha\bigl(h(s)\bigr) s^{1+\delta'} =
  \lim_{s\to\infty} \Bigl( b\bigl(h(s)\bigr) s^{1+\delta}\Bigr)^\alpha=0.
  \]
Therefore, $b_\alpha\in\S$, $\alpha\in[\alpha_0,1]$.

Let, additionally, \eqref{eq:finiteintd} hold for both $b$ and $b_{\alpha'}$ (i.e., in particular, $b\in\S[n]$) and for some $n\in\N$. Then one can use again the log-convexity of $L^p$-norms, now for $L^p\bigl((\rho,\infty),s^n\,ds\bigr)$ spaces, to deduce that $b_\alpha\in\S[n]$, $\alpha\in[\alpha_0,1]$.

Finally, $b,b_{\alpha_0}\in\S[n]$, $n\in\N$, implies $b,b_{\alpha_0}\in\S[1]$, and hence, cf.~Remark~\ref{rem:tosummarise}, $b$ and $b_{\alpha_0}$ are sub-exponential on $\R$, i.e.~\eqref{eq:realsubexponR} holds for both $b$ and $b_{\alpha_0}$. Therefore, for an arbitrary $\eps\in(0,1)$, there exists $\rho_0=\rho_0(\eps, b,b_{\alpha_0})>\rho $ (where $\rho$ is from Definition~\ref{def:super-subexp}) and
  $B_0:=2(1+\eps)\max\bigl\{\int_\R b(s)\,ds,\int_\R b_{\alpha_0}(s)\,ds\bigr\}>0$,
such that, for all $s\geq\rho_0$,
\begin{equation}\label{eq:weknow}
\int_\R b(s-\tau)b(\tau)\,d\tau\leq B_0 b(s),\qquad
\int_\R b_{\alpha_0}(s-\tau)b_{\alpha_0}(\tau)\,d\tau\leq B_0 b_{\alpha_0}(s).
\end{equation}
Then, applying again the norm log-convexity arguments, cf.~\eqref{eq:normlogconv}, one gets, for any fixed $s\geq\rho_0$ and for all $\alpha\in(\alpha_0,1)$
\[
\int_\R \bigl(b(s-\tau)b(\tau)\bigr)^\alpha d\tau\leq
  \biggl(\int_\R \bigl(b(s-\tau)b(\tau)\bigr)^{\alpha_0} d\tau\biggr)^{\frac{1}{\alpha_0}(1-\beta)\alpha}
  \biggl(\int_\R b(s-\tau)b(\tau) d\tau\biggr)^{\beta\alpha },
\]
where $\beta=\frac{\alpha-\alpha_0}{\alpha(1-\alpha_0)}\in(0,1)$. Combining the latter inequality with \eqref{eq:weknow}, one gets
\[
\int_\R \bigl(b(s-\tau)b(\tau)\bigr)^\alpha d\tau\leq
\bigl(B_0 (b(s))^{\alpha_0}\bigr)^{\frac{1}{\alpha_0}(1-\beta)\alpha}
\bigl(B_0 b(s)\bigr)^{\beta\alpha}=B_0 \bigl(b(s)\bigr)^\alpha.
\]
The theorem is fully proved now.
\end{proof}

It is naturally to expect that asymptotically small changes of  tail properties preserve the sub-exponential property of a function. Namely, consider the following definition.
\begin{defn}
  Two functions $b_1,b_2:\R\to\R_+$ are said to be {\em weakly tail-equivalent} if
  \begin{equation*}
    0<\liminf_{s\to\infty}\frac{b_1(s)}{b_2(s)}\leq \limsup_{s\to\infty}\frac{b_1(s)}{b_2(s)}<\infty,
  \end{equation*}
  or, in other words, if there exist $\rho>0$ and $C_2\geq C_1>0$, such that,
  \begin{equation}\label{eq:ineqweakeqv}
    C_1 b_1(s)\leq b_2(s)\leq C_2 b_1(s), \quad s\geq\rho.
  \end{equation}
\end{defn}

The proof of the following statement is a straightforward consequence of \cite[Theorem 4.8]{FKZ2013}.
\begin{prop}
  Let $b_1:\R\to\R_+$ be a weakly sub-exponential on $\R$ function. Let $b_2:\R\to\R_+$ be a long-tailed function which is weakly tail-equivalent to $b_1$. Then $b_2$ is weakly sub-exponential on $\R$ as well. If, additionally, \eqref{eq:quasi-decreasing} holds for $b=b_1$, then $b_2$ is sub-exponential on $\R$.
\end{prop}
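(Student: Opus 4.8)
The plan is to reduce everything to the normalized restrictions $b_{1,+},b_{2,+}$ on $\R_+$ (given by \eqref{eq:defofbplus}) and to run the standard decomposition of the self-convolution at a suitable cutoff, transferring the only delicate estimate across the weak tail-equivalence \eqref{eq:ineqweakeqv}. First I would record the preliminary reductions. Since $b_2$ is long-tailed and $b_2(s)\le C_2 b_1(s)$ for $s\ge\rho$ by \eqref{eq:ineqweakeqv}, with $b_1\in L^1(\R_+)$, the function $b_2$ lies in $L^1(\R_+)$ and $b_{2,+}$ is well-defined. Dividing \eqref{eq:ineqweakeqv} by the respective $L^1(\R_+)$-masses shows $b_{1,+}$ and $b_{2,+}$ are again weakly tail-equivalent, so there are $C_1',C_2'>0$ with $C_1' b_{1,+}(s)\le b_{2,+}(s)\le C_2' b_{1,+}(s)$ for large $s$. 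Both are long-tailed ($b_{1,+}$ by hypothesis, $b_{2,+}$ since $b_2$ is), so it remains to verify the self-convolution asymptotic \eqref{eq:defofsubexponR+} for $b_{2,+}$.

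For the main argument, since $b_{1,+}$ is sub-exponential on $\R_+$, \cite[Theorem~4.7]{FKZ2013} (already invoked through \eqref{eq:safsfa} in the proof of Proposition~\ref{prop:almostdream}) supplies an increasing $h$ with $h(s)<\frac s2$, $h(s)\to\infty$ and $\int_{h(s)}^{s-h(s)}b_{1,+}(s-\tau)b_{1,+}(\tau)\,d\tau=o(b_{1,+}(s))$. I would then split
\[
(b_{2,+}*b_{2,+})(s)=\Bigl(\int_0^{h(s)}+\int_{h(s)}^{s-h(s)}+\int_{s-h(s)}^{s}\Bigr)b_{2,+}(s-\tau)b_{2,+}(\tau)\,d\tau.
\]
On the two outer pieces, $\tau\in[0,h(s)]$ forces $s-\tau\in[s-h(s),s]$, and the locally uniform long-tailedness \eqref{eq:uniformlongtailedR} of $b_{2,+}$ gives $b_{2,+}(s-\tau)=(1+o(1))b_{2,+}(s)$ uniformly; since $\int_0^{h(s)}b_{2,+}(\tau)\,d\tau\to1$, each outer integral is asymptotically $b_{2,+}(s)$, contributing $2b_{2,+}(s)$. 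For the central piece, both $\tau$ and $s-\tau$ exceed $h(s)\to\infty$, so \eqref{eq:ineqweakeqv} bounds both factors and, together with $b_{1,+}(s)\le (C_1')^{-1}b_{2,+}(s)$, yields $\int_{h(s)}^{s-h(s)}b_{2,+}(s-\tau)b_{2,+}(\tau)\,d\tau\le (C_2')^{2}\int_{h(s)}^{s-h(s)}b_{1,+}(s-\tau)b_{1,+}(\tau)\,d\tau=o(b_{2,+}(s))$. Summing the three pieces gives $(b_{2,+}*b_{2,+})(s)\sim2b_{2,+}(s)$, i.e.\ $b_2$ is weakly sub-exponential on $\R$.

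For the second assertion I would transfer the quasi-decreasing property: if \eqref{eq:quasi-decreasing} holds for $b_1$, then for $s$ large and $\tau>0$, \eqref{eq:ineqweakeqv} gives $b_2(s+\tau)\le C_2 b_1(s+\tau)\le C_2K b_1(s)\le (C_2K/C_1)\,b_2(s)$, so \eqref{eq:quasi-decreasing} holds for $b_2$ with $K'=C_2K/C_1$ and a possibly larger threshold. Lemma~\ref{lem:repeatFoss} then upgrades weak sub-exponentiality to \eqref{eq:realsubexponR}, i.e.\ $b_2$ is sub-exponential on $\R$.

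The conceptual difficulty is that \eqref{eq:ineqweakeqv} only provides two-sided bounds rather than a genuine limit $b_2/b_1\to c$, so Proposition~\ref{prop:almostdream} is not directly applicable; the splitting above is tailored to circumvent this, since the central piece needs only the inequality while the outer pieces use the long-tailedness of $b_2$ alone. The one technical nuisance I anticipate is the coordination of cutoffs: the $h$ produced for $b_{1,+}$ by \cite[Theorem~4.7]{FKZ2013} and an insensitivity cutoff for $b_{2,+}$ need not coincide, so one must work with a common cutoff and absorb the resulting short end-intervals into the outer estimates. This bookkeeping is precisely what is packaged in \cite[Theorem~4.8]{FKZ2013}, so one may alternatively invoke that result directly to obtain the sub-exponentiality of $b_{2,+}$, after which the two transfer arguments above (integrability and quasi-decrease) complete the proof.
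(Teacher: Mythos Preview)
Your proposal is correct and aligns with the paper's own approach: the paper simply records that the statement is ``a straightforward consequence of \cite[Theorem~4.8]{FKZ2013}'', which is exactly the black box you arrive at after unpacking the three-piece decomposition and the cutoff-coordination issue. Your additional step for the second assertion---transferring \eqref{eq:quasi-decreasing} to $b_2$ via \eqref{eq:ineqweakeqv} and then invoking Lemma~\ref{lem:repeatFoss}---is the natural way to read the paper's one-line claim, and is correct.
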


\begin{prop}\label{prop:tailprop}
  Let $b_1\in\S$ and let $b_2:\R\to\R_+$ be a bounded tail-decreasing and tail-log-convex function, such that
  \begin{equation}\label{eq:tailprop}
    \lim_{s\to\infty}\frac{b_2(s)}{b_1(s)}=C\in(0,\infty).
  \end{equation}
  Then $b_2\in\S$.
\end{prop}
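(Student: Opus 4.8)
The plan is to verify directly the three defining conditions of the class $\S$ (Definition~\ref{def:super-subexp}) for $b_2$, recycling the witnesses $h=h_{b_1}$ and $\delta=\delta_{b_1}$ supplied by the hypothesis $b_1\in\S$ and exploiting the asymptotic equivalence $b_2(s)\sim C b_1(s)$ from \eqref{eq:tailprop}. The whole argument rests on the elementary observation that, for any quantity of interest, one may insert the ratio $\tfrac{b_2}{b_1}$, which tends to the finite positive constant $C$ whenever its argument tends to $\infty$, and hence does not affect limits.

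First I would dispose of conditions~1) and~2), which are almost immediate. Boundedness of $b_2$ is assumed. For integrability on $\R_+$, note that $b_2$ is bounded on any finite interval, while \eqref{eq:tailprop} gives $b_2(s)\leq (C+1)b_1(s)$ for all large $s$; since $b_1\in L^1(\R_+)$, this yields $b_2\in L^1(\R_+)$. As for condition~2), $b_2$ is assumed tail-decreasing and tail-log-convex, so there is a threshold beyond which $b_2$ is simultaneously log-convex and strictly decreasing to~$0$; since $b_2\to0$ (being tail-decreasing, cf.~Definition~\ref{def:taildecreasing}), I can enlarge this threshold to a value $\rho=\rho_{b_2}>1$ at which $b_2(\rho)\leq 1$, as required.

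The substantive step is condition~3). I would take the same increasing function $h=h_{b_1}$ and exponent $\delta=\delta_{b_1}$ that witness $b_1\in\S$; these already satisfy $h(s)<\tfrac{s}{2}$ and $h(s)\to\infty$. To get \eqref{eq:surprise} for $b_2$, I factor
\[
\frac{b_2\bigl(s\pm h(s)\bigr)}{b_2(s)}=\frac{b_2\bigl(s\pm h(s)\bigr)}{b_1\bigl(s\pm h(s)\bigr)}\cdot\frac{b_1\bigl(s\pm h(s)\bigr)}{b_1(s)}\cdot\frac{b_1(s)}{b_2(s)}.
\]
As $s\to\infty$ one has $s\pm h(s)\to\infty$ (using $h(s)<\tfrac{s}{2}$ for the minus sign), so by \eqref{eq:tailprop} the first factor tends to $C$ and the third to $C^{-1}$, while the middle factor tends to $1$ because \eqref{eq:surprise} holds for $b_1$; hence the product tends to $1$, giving \eqref{eq:surprise} for $b_2$. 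For the decay condition, since $h(s)\to\infty$ I again use \eqref{eq:tailprop} to write $b_2\bigl(h(s)\bigr)s^{1+\delta}=\tfrac{b_2(h(s))}{b_1(h(s))}\,b_1\bigl(h(s)\bigr)s^{1+\delta}$, whose right-hand side tends to $C\cdot 0=0$ by \eqref{eq:Sdelta} for $b_1$. Thus condition~3) holds with $(h,\delta)$, and $b_2\in\S$.

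I do not anticipate a genuine obstacle here: every condition defining $\S$ is a tail condition (or can be arranged to hold past a large threshold), and tail-equivalence with a finite positive constant transports each of them. The only points requiring a little care are checking that $s-h(s)\to\infty$ so that both signs in \eqref{eq:surprise} are covered, and noting that the log-convex and decreasing structure of $b_2$ is granted by hypothesis rather than inherited from $b_1$, so that no convexity argument is needed.
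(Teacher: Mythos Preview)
Your proof is correct and follows essentially the same route as the paper: verify the three items of Definition~\ref{def:super-subexp} for $b_2$ by reusing the witnesses $h=h_{b_1}$ and $\delta=\delta_{b_1}$, and transfer each tail condition via the asymptotic $b_2\sim C b_1$. Your factoring argument for \eqref{eq:surprise} is in fact a slightly cleaner packaging of the paper's explicit $\eps$-bounds in \eqref{eq:closeC12}, but the underlying idea is identical.
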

\begin{proof}
Clearly, $b_2$ is long-tailed as $b_1$ is such. Let  $\delta\in(0,1)$ and $h$ be an increasing function, with
$h(s)<\frac{s}{2}$, $\lim\limits_{s\to\infty}h(s)=\infty$, such that  \eqref{eq:surprise} and \eqref{eq:Sdelta} hold for $b=b_1$.
Let $\eps\in\bigl(0,\min\{1,C\}\bigr)$, and choose $\rho>1$, such that $b_2$ is decreasing and log-convex on $[\rho,\infty)$, and $b_2(\rho)\leq 1$. By~\eqref{eq:tailprop} and \eqref{eq:surprise} (for $b=b_1$), there exists $\rho_1\geq\rho$, such that
  \begin{equation}\label{eq:closeC12}
0<(C-\eps)b_1(s)\leq b_2(s)\leq (C+\eps) b_1(s), \qquad \quad
\biggl\lvert \frac{b_1(s\pm h(s))}{b_1(s)}-1\biggr\rvert<\eps
  \end{equation}
  for all $s\geq \rho_1$.
Since $b_2$ is bounded and $b_1\in L^1(\R_+)$, we have from \eqref{eq:closeC12} that $b_2\in L^1(\R_+)$.
By \eqref{eq:closeC12}, for any $s\geq 2 \rho_1$, 
\[
\biggl\lvert \frac{b_2(s\pm h(s))}{b_2(s)}-1\biggr\rvert
<\max\biggl\{\eps\frac{C+\eps}{C-\eps}+\frac{C+\eps}{C-\eps}-1,\
\eps\frac{C-\eps}{C+\eps}+1-\frac{C-\eps}{C+\eps}\biggr\}.
\]
Since the latter expression may be arbitrary small, by an appropriate choice of $\eps$, one gets that \eqref{eq:surprise} holds for $b=b_2$. Finally, \eqref{eq:Sdelta} with $b=b_1$ and \eqref{eq:tailprop} imply that \eqref{eq:Sdelta} holds with $b=b_2$ and the same $\delta$ and $h$.
\end{proof}

\begin{rem}\label{rem:basbsadfsaofas}
In the assumptions of the previous theorem, if, additionally, $b_1\in\S[n]$ for some $n\in\N$, and $b_2$ is integrable on $(-\infty,-\rho_2)$ for some $\rho_2>0$, then $b_2\in\S[n]$ (because of \eqref{eq:closeC12} and the boundedness of $b_2$).
\end{rem}

On the other hand, if one can check that both functions $b_1$ and $b_2$ satisfy \eqref{eq:surprise} with the same function $h(s)$, then
the sufficient condition to verify \eqref{eq:Sdelta} for $b=b_2$, provided that it holds for $b=b_1$, is much weaker than
\eqref{eq:tailprop}. To present the corresponding statement, consider the following definition.

\begin{defn}\label{def:logeqv}
Let $b_1,b_2:\R\to\R_+$ and, for some $\rho\geq0$, $b_i(s)>0$ for all $s\in[\rho,\infty)$, $i=1,2$. The functions $b_1$ and $b_2$ are said to be  {\em (asymptotically) log-equivalent}, if
 \begin{equation}\label{eq:log-equiv}
   \log b_1(s) \sim \log b_2(s), \quad s\to\infty.
 \end{equation}
\end{defn}

\begin{prop}\label{prop:tailprop2cool}
  Let $b_1\in\S$ and let $h$ be the function corresponding to Definition~\ref{def:super-subexp} with $b=b_1$. 
  Let $b_2:\R\to\R_+$ be a bounded tail-decreasing and tail-log-convex function, such that \eqref{eq:surprise} holds with $b=b_2$  and the same $h$. Suppose that $b_1$ and $b_2$ are log-equivalent.  Then $b_2\in\S$. If, additionally, there exists $\alpha'\in(0,1)$, such that \eqref{eq:finiteintd} holds with $b=(b_1)^{\alpha'}$  and $b_2$ is integrable on $(-\infty,\rho)$, then $b_2\in\S[n]$.
\end{prop}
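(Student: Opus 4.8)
The plan is to verify, one at a time, the three defining requirements of $\S$ in Definition~\ref{def:super-subexp} for $b_2$, and then to add the moment condition \eqref{eq:finiteintd} for the $\S[n]$ part. Requirement~2 is essentially handed to us: $b_2$ is assumed bounded, tail-decreasing and tail-log-convex, and since $b_2(s)\to 0$ we may enlarge its threshold to some $\rho_2>1$ with $b_2(\rho_2)\leq 1$ without destroying monotonicity or log-convexity on $[\rho_2,\infty)$. Likewise, \eqref{eq:surprise} for $b_2$ with the given $h$ is assumed outright. Thus the whole content of the membership $b_2\in\S$ reduces to two points: the decay estimate \eqref{eq:Sdelta} for $b_2$ (with some $\delta'\in(0,1)$ and the same $h$), and the integrability $b_2\in L^1(\R_+)$.

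The heart of the argument, and the step I expect to be the main obstacle, is deriving \eqref{eq:Sdelta} for $b_2$ from log-equivalence alone, which is a much weaker hypothesis than the weak tail-equivalence \eqref{eq:tailprop} used in Proposition~\ref{prop:tailprop}. I would work on the logarithmic scale. Fix $\eta>0$ small enough that $(1-\eta)(1+\delta)>1$ and set $\delta':=(1-\eta)(1+\delta)-1\in(0,\delta)\subset(0,1)$; the point of this choice is the identity $\frac{1+\delta'}{1-\eta}=1+\delta$. Since $h(s)\to\infty$ we have $b_1(h(s))\to 0$, hence $\log b_1(h(s))\to-\infty$, and log-equivalence \eqref{eq:log-equiv} evaluated along $h(s)$ gives, for all large $s$, the bound $\frac{\log b_2(h(s))}{\log b_1(h(s))}>1-\eta$; multiplying by the negative quantity $\log b_1(h(s))$ reverses the inequality and yields $\log b_2(h(s))\leq(1-\eta)\log b_1(h(s))$. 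Consequently
\[
\log b_2(h(s))+(1+\delta')\log s\leq(1-\eta)\Bigl(\log b_1(h(s))+(1+\delta)\log s\Bigr),
\]
and the bracket tends to $-\infty$ precisely because \eqref{eq:Sdelta} holds for $b_1$. Hence $b_2(h(s))\,s^{1+\delta'}\to 0$, i.e.~\eqref{eq:Sdelta} holds for $b_2$ with $\delta'$ and the same $h$.

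Integrability then comes essentially for free. The derivation of the polynomial bound \eqref{eq:Eimpliespolbdd} in the remark following Definition~\ref{def:super-subexp} uses only that $b$ is tail-decreasing, that $h$ is increasing with $h(s)<\frac{s}{2}$ and $h(s)\to\infty$, and that \eqref{eq:Sdelta} holds; all of these are now available for $b_2$. Running that argument verbatim (via $b_2(s)\leq b_2(h(2s))$ together with \eqref{eq:Sdelta}) yields $b_2(s)\leq B's^{-1-\delta'}$ for large $s$, which combined with the boundedness of $b_2$ gives $b_2\in L^1(\R_+)$. This closes Requirement~1, and therefore $b_2\in\S$.

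For the $\S[n]$ statement it remains to establish \eqref{eq:finiteintd} for $b_2$. The integral $\int_{-\infty}^{\rho}b_2(s)\,ds$ is finite by the added hypothesis that $b_2$ is integrable on $(-\infty,\rho)$. For the tail moment I would again compare on the logarithmic scale, now directly at $s$: for large $s$ we have $b_1(s)<1$, so $\log b_1(s)<0$, and log-equivalence gives $\frac{\log b_2(s)}{\log b_1(s)}>\alpha'$, whence $b_2(s)\leq\bigl(b_1(s)\bigr)^{\alpha'}$. Therefore $\int^{\infty}b_2(s)\,s^{n-1}\,ds$ is dominated, apart from a finite initial piece on which $b_2$ is bounded, by $\int^{\infty}\bigl(b_1(s)\bigr)^{\alpha'}s^{n-1}\,ds$, which is finite by the assumed validity of \eqref{eq:finiteintd} for $b=(b_1)^{\alpha'}$. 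Thus \eqref{eq:finiteintd} holds for $b_2$, and $b_2\in\S[n]$.
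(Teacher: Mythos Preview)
Your proof is correct and follows essentially the same route as the paper's: translate log-equivalence into the sandwich $b_2\le b_1^{1-\eta}$ for large arguments, then raise the known bound $b_1(h(s))s^{1+\delta}\to 0$ to the power $1-\eta$ to obtain \eqref{eq:Sdelta} for $b_2$ with exponent $(1-\eta)(1+\delta)-1$; your $\eta$ is the paper's $\eps\in\bigl(0,\frac{\delta}{1+\delta}\bigr)$. For the $\S[n]$ part the paper imposes the extra constraint $\eps<1-\alpha'$ so that $b_2\le b_1^{1-\eps}<b_1^{\alpha'}$, whereas you go directly to $b_2\le b_1^{\alpha'}$ by taking the log-equivalence ratio above $\alpha'$; this is the same idea, and your explicit verification of $b_2\in L^1(\R_+)$ via \eqref{eq:Eimpliespolbdd} fills in a step the paper leaves implicit.
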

\begin{proof}
Let $\delta\in(0,1)$ be such that \eqref{eq:Sdelta} holds for $b$ replaced by $b_1$. Take an arbitrary $\eps\in\bigl(0,\frac{\delta}{1+\delta}\bigr)$. By \eqref{eq:log-equiv}, there exists $\rho_\eps>0$, such that $b_i(s)<1$,  $s>\rho_\eps$, $i=1,2$, and 
\begin{gather}
-(1-\eps) \log b_1(s)\leq -\log b_2 (s)\leq -(1+\eps)\log b_1(s), \quad s>\rho_\eps,\notag\\
 b_1(s)^{1+\eps}\leq b_2(s)\leq b_1(s)^{1-\eps}, \quad s>\rho_\eps.\label{eq:asfdsafsadfdsewtewtegw}
\end{gather}
Since $h(s)\to\infty$, $s\to\infty$, there exists $\rho_0>\rho_\eps$, such that $h(s)>\rho_\eps$ for any $s>\rho_0$. Then, by \eqref{eq:asfdsafsadfdsewtewtegw}, we have, for all $s>\rho_0$,
\begin{equation*}
b_2(h(s))s^{(1+\delta)(1-\eps)}<b_1(h(s))^{1-\eps}s^{(1+\delta)(1-\eps)}
=\bigr(b_1(h(s))s^{1+\delta}\bigr)^{1-\eps},
\end{equation*}
and therefore, \eqref{eq:Sdelta} holds with $b=b_2$ and $\delta$ replaced by 
$(1+\delta)(1-\eps)-1=\delta-\eps(1+\delta)\in(0,1)$,
that proves the first statement. To prove the second one, assume, additionally, that $\eps<1-\alpha'$. Then, by~\eqref{eq:asfdsafsadfdsewtewtegw}, we have
$b_2(s)s^{n-1}\leq b_1(s)^{1-\eps} s^{n-1}<b_1(s)^{\alpha'}s^{n-1}$ for all $s>\rho_\eps$,
as $b_1(s)<1$ here. 
\end{proof}

\subsection{Examples}\label{subsec:examples}

We consider now examples of functions $b\in\S$. Because of Proposition~\ref{prop:tailprop2cool}, we will classify these functions `up to log-equivalence', i.e.~by tail properties of the function
\[
l(s):=-\log b(s).
\]

Taking into account the result of Theorem~\ref{thm:coolthm} concerning the function $\breve{b}$, it will be enough to define $b$ on some $(s_0,\infty)$, $s_0>0$ only. 
Next, by~Lemma~\ref{le:SimplsubexpR+}, the function $b_+$ defined by \eqref{eq:defofbplus} is a sub-exponential density on $\R_+$. Therefore, one can use the classical examples of such densities, see e.g.~\cite{FKZ2013}. However, using the result of Theorem~\ref{thm:coolthm} concerning the function $\widetilde{b}$, one can consider that examples in their `simplest' forms (ignoring any shifts of the argument or scales of the argument or the function itself).

Now we consider different asymptotics of the function $l(s)=-\log b(s)$. In~all particular examples below, it is straightforward to check that each particular bounded functions $b$ is such that $b'(s)<0$ and $(\log b(s))''>0$ for all big enough values of $s$, i.e.~$b$ is tail-decreasing and tail-log-convex.

\subsection*{Class 1: $l(s)\sim D\log s$, $s\to\infty$, $D>1$}

\paragraph{Polynomial decay.} Let $b:\R\to\R_+$ be a bounded tail-decreasing tail-log-convex function, such that $b(s)\sim q s^{-D}$, $s\to\infty$, $D>1$, $q>0$.
By Proposition~\ref{prop:tailprop}, to show that $b\in\S$, it is enough to prove this for 
\[
b(s)=\1_{\R_+}(s)(1+s)^{-D}, \quad s\in\R.
\]
For an arbitrary $\ga\in\bigl(\frac{1}{D},1\bigr)$, consider $h(s)=s^\ga$, $s>0$. Then it is straightforward to check that \eqref{eq:surprise} and \eqref{eq:Sdelta} hold, provided that $\delta\in(0, \ga D-1)\subset(0,1)$. As a result, $b\in\S$. Clearly, $b\in\S[n]$ for $D>n$, cf.~Remark~\ref{rem:basbsadfsaofas}. 

Classical examples of the polynomially decaying probability densities in \cite{FKZ2013} can be described by the following functions: 
  \begin{enumerate}
  \item {\em Student's $t$-function}: $\mathscr{T}(s)=(1+s^2)^{-p}$, $s>0$, $p>\frac{1}{2}$.
  Note that $\mathscr{T}\in\S[n]$, $n\in\N$, if only $p>\frac{n}{2}$.
 The case $p=1$ is referred to the Cauchy distribution, the corresponding function belongs to $\S[n]$ for $n=1$ only.
    \item {\em The L\'evy function}: 
    $\mathscr{L}(s)=s^{-\frac{3}{2}}\exp\bigl(-\frac{c}{s}\bigr)$, $s>0$, $c>0$.
    \item {\em The Burr function}: $\mathscr{B}(s)= s^{c-1}(1+s^c)^{-k-1}$, $s>0$, $c>0$, $k>0$.
    Note that the case $c=1$ is related to the Pareto distribution; the latter has the density $kp^k \mathscr{B}(s-1)\1_{[p,\infty)}(s)$ for any $p>0$.
\end{enumerate}

\paragraph{Logarithmic perturbation of the polynomial decay.} 
Let $D>1$, $\nu\in\R$, and
\[
b(s)=\1_{(1,\infty)}(s)(\log s)^\nu s^{-D}, \quad s\in\R.
\]
We are going to apply Proposition~\ref{prop:tailprop2cool} now, with $b_1(s)=s^{-D}$ and $b_2(s)=(\log s)^\nu s^{-D}$.
Indeed, then \eqref{eq:log-equiv} evidently holds. It remains to check that \eqref{eq:surprise} holds for both $b_1$ and $b_2$ with the same $h(s)=s^\ga$, $\ga\in(0,1)$. One has then
$\frac{\log (s\pm s^\ga)}{\log s} \to 1$ as $s\to\infty$, that yields the needed. 
  
\subsection*{Class 2:  $l(s)\sim D(\log s)^q$, $s\to\infty$, $q>1$, $D>0$} 
\label{subsubsect:lognorm}

Consider the function
\[
N(s):=\1_{\R_+}(s)\exp\bigl(-D(\log s)^q\bigr), \quad s\in\R.
\]
Take $h(s)=\1_{[\rho,\infty)}(s)s^{\frac{1}{q}}$, where $\rho>1$ is chosen such that $h(s)<\frac{s}{2}$ for $s\geq\rho$. Since $q>1$, we have
\begin{align*}
\frac{N(s\pm h(s))}{N(s)}
    &=
    \exp\biggl\{D(\log s)^q\biggl(1-\Bigl(1+\frac{\log(1\pm s^{\frac{1}{q}-1})}{\log s}\Bigr)^q\biggr)\biggr\}\\
&\sim \exp\biggl\{D(\log s)^q\biggl(\mp q \frac{s^{\frac{1}{q}-1}}{\log s}\biggr)\biggr\}\sim 1, \quad s\to\infty,
\end{align*}
that proves \eqref{eq:surprise}.
Next, for any $\delta\in\R$,
    \[
    N\bigl(s^{\frac{1}{q}}\bigr)s^{1+\delta}=
    \exp\bigl(-Dq^{-q}(\log s)^q+(1+\delta)\log s\bigr)\to0,\quad s\to\infty.
    \]
    As a result, $N\in\S$. Moreover, evidently, $N\in\S[n]$, for any $n\in\N$.
 
    We may also consider Proposition~\ref{prop:tailprop2cool} for $b_1=b$ and $b_2=pb$, where $b_2$ is tail-decreasing and tail-log-convex function, such that $\log p = o(\log b)$ (that is equivalent to $\log b_1\sim\log b_2$) and $p$ satisfies \eqref{eq:surprise} with $h(s)=s^{\frac{1}{q}}$. According to the results above, a natural example of such $p(s)$ might be $s^D$, $D\in\R$. It is straightforward to verify that, for any $D\in\R$, $b_2=pb_1$ is tail-decreasing and tail-log-convex. As a result, then $b_2\in\S[n]$, $n\in\N$.

The classical log-normal distribution has the density described by the function
    $\mathscr{N}(s)=\frac{1}{s}\exp\Bigl(-\frac{(\log s)^2}{2\gamma^2}\Bigr)$, $s>0$, $\gamma>0$, that can be an example of the function $b_2$ above.

\subsection*{Class 3: $l(s)\sim s^\alpha$, $\alpha\in(0,1)$}

    Consider, for any $\alpha\in(0,1)$, the so-called {\em fractional exponent}
    \begin{equation}
        w(s)=\1_{\R_+}(s)\exp(-s^\alpha), \quad s\in\R.\label{eq:fractionalexp}
    \end{equation}
    Set $h(s)=\1_{[\rho,\infty)}(s)(\log s)^{\frac{2}{\alpha}}$, where $\rho>0$ is chosen such that $h(s)<\frac{s}{2}$ for $s\geq\rho$. Then $h(s)=o(s)$, $s\to\infty$, and hence
\[
    \frac{w\bigl(s\pm h(s)\bigr)}{w(s)}=
    \exp\biggl\{-s^\alpha\biggl( \Bigl( 1\pm \frac{h(s)}{s}\Bigr)^\alpha-1 \biggr)\biggr\}\sim
    \exp\biggl\{- s^\alpha\biggl( \pm \alpha \frac{h(s)}{s} \biggr)\biggr\}
    \sim 1, 
\]
as $s\to\infty$, that proves \eqref{eq:surprise}. Next, for any $\delta\in\R$,
   \[
   w\bigl(h(s)\bigr)s^{1+\delta}=\exp\bigl(-(\log s)^2+(1+\delta)\log s\bigr)\to0, \quad s\to\infty.
   \]
    As a result, $w\in\S$. It is clear also that $w\in\S[n]$ for all $n\in\N$. 

Similarly to the above, one can show that $pw\in\S$, provided that, in particular, $\log p=o(\log w)$ and \eqref{eq:surprise} holds for $b=p$ and $h(s)=(\log s)^{\frac{2}{\alpha}}$. Again, one can consider $p(s)=s^D$, $D\in\R$, since it satisfies \eqref{eq:surprise} with $h(s)=s^\ga>(\log s)^{\frac{2}{\alpha}}$, $\alpha,\gamma\in(0,1)$, and big enough $s$. As before, the verification that, for any $D\in\R$, $b_2=pb_1$ is tail-decreasing and tail-log-convex is straightforward. 

The probability density of the classical Weibull distribution is described by the function
    $\mathscr{W}(s)= s^{\alpha-1}\exp(-s^\alpha)$, $s\geq\rho>0$, $\alpha\in(0,1)$. 
 By the above, $\mathscr{W}\in\S[n]$, $n\in\N$.   Note that $\int_s^\infty\mathscr{W}(\tau)\,d\tau=\frac{1}{\alpha} w(s)$, where $w$ is given by \eqref{eq:fractionalexp}.

\subsection*{Class 4: $l(s)\sim \frac{s}{(\log s)^q}$, $q>1$}

    Consider also a function which decays `slightly' slowly than an exponential function. Namely, let, for an arbitrary fixed $q>1$,
   \[
        g(s)=\1_{\R_+}(s)\exp\Bigl(-\frac{s}{(\log s)^q}\Bigr),\quad s\in\R.
   \]
   Take, for an arbitrary $\gamma\in(1,q)$, $h(s)=(\log s)^\gamma$, $s>0$; and denote, for a brevity, $p(s):=\frac{h(s)}{s}\to0$, $s\to\infty$. Then, $\log (s+h(s))=\log s+\log(1+p(s))$. Set also
   $r(s):=\frac{\log(1+p(s))}{\log s}\to0$, $s\to\infty$.
   Then, for any $s>e^{q+1}$, we have
   \begin{align*}
   &\quad\log\frac{g(s+ h(s))}{g(s)}
   =
 \frac{s}{(\log s)^q}\Bigl(1-\frac{1+p(s)}{\bigl(1+r(s)\bigr)^q}\Bigr)\\
   &=
   \frac{1}{\bigl(1+r(s)\bigr)^q}\biggl(q\frac{\bigl(1+r(s)\bigr)^q  -1}{q r(s)} \frac{\log(1+p(s))}{p(s)}(\log s)^{\gamma-q-1}-(\log s)^{\gamma-q}\biggr)\to0,
   \end{align*}
   as $s\to\infty$, since $\gamma<q$; and similarly $\log\frac{g(s- h(s))}{g(s)}\to0$, $s\to\infty$. Therefore, \eqref{eq:surprise} holds for $b=g$.
   Next,
   \[
   \log \bigl(g(h(s))s^{1+\delta}\bigr)=
   -(\log s)\Bigl(\frac{(\log s)^{\gamma-1}}{\gamma^q(\log \log s)^q}
   -(1+\delta)\Bigr)\to-\infty, \quad s\to\infty,
   \]
   that yields \eqref{eq:Sdelta} for $b=g$. As a result, $g\in\S$. Again, evidently, $g\in\S[n]$, $n\in\N$. The same arguments as before show that, for any $D\in\R$, the function $s^D g(s)$ belongs to $\S[n]$ as well.

    \begin{rem}\label{rem:exception}
      Naturally, $q\in(0,1]$ gives behavior of $g(s)$ more `close' to the exponential function. Unfortunately, our approach does not cover this case: the analysis above shows that $h(s)$, to fulfill even \eqref{eq:S0}, must grow faster than $\log s$, whereas so `big' $h(s)$ would not fulfill \eqref{eq:surprise}. In general, Lemma~\ref{le:SimplsubexpR+} gives a sufficient condition only, to get a sub-exponential density on $\R_+$. It can be shown, see e.g.~\cite[Example~1.4.3]{EKM1997}, that a probability distribution, whose density $b$ on $\R_+$ is such that $\int_s^\infty b(\tau)\,d\tau\sim g(s)$, $s\to\infty$, with $q>0$, is a sub-exponential distribution (for the latter definition, see e.g.~\cite[Definition~3.1]{FKZ2013}). Then we expect that $b(s)\sim -g'(s)$, $s\to\infty$, and it is easy to see that $\log (-g'(s))\sim \log g(s)$, $s\to\infty$. It should be stressed though that, in general, sub-exponential property of a distribution does not imply the corresponding property of its density, cf.~\cite[Section~4.2]{FKZ2013}. Therefore, we can not state that the function $b$ above is a sub-exponential one for $q\in(0,1]$.
    \end{rem}
   
Combining the results above, one gets the following statement.
\begin{cor}
Let $b:\R\to\R_+$ be a bounded tail-decreasing and tail-log-convex function, such that, for some $C>0$, the function $Cb(s)$ has either of the following asymptotics as $s\to\infty$ 
\begin{alignat*}{2}
&(\log s)^\mu s^{-(n+\delta)},\qquad   && (\log s)^\mu  s^\nu \exp\bigl(-D(\log s)^q\bigr), \\
&(\log s)^\mu s^\nu \exp\bigl(-s^\alpha\bigr), \qquad  &&   
(\log s)^\mu s^\nu \exp\Bigl(-\frac{s}{(\log s)^q}\Bigr),
\end{alignat*}
where $D, \delta>0$, $q>1$, $\alpha\in(0,1)$, $\nu,\mu\in\R$. Then $b\in\S[n]$, $n\in\N$.
\end{cor}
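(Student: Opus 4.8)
The plan is to reduce the corollary to the four worked examples (Classes~1--4) and to Proposition~\ref{prop:tailprop2cool}, which is precisely the tool designed for handling log-equivalent perturbations. First I would observe that by Theorem~\ref{thm:coolthm}, concerning the function $\widetilde{b}$, any fixed multiplicative constant $C>0$ and any affine change of the argument are harmless for membership in $\S[n]$; so it suffices to treat each of the four asymptotic forms in its `simplest' normalisation, namely with $C=1$ and with the argument written as $s$ on $(s_0,\infty)$. Similarly, by the part of Theorem~\ref{thm:coolthm} concerning $\breve{b}$, the behaviour on any bounded initial interval is irrelevant, so I only need to verify the tail asymptotics.

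Next I would split into the four classes and in each case identify a `base' function $b_1$ whose membership in $\S[n]$ has already been established in the examples, together with a suitable common sensitivity function $h$:
\begin{itemize}
\item For $(\log s)^\mu s^{-(n+\delta)}$, take $b_1(s)=s^{-D}$ with $D=n+\delta>n$ and $h(s)=s^\ga$, $\ga\in\bigl(\tfrac1D,1\bigr)$, as in the Polynomial decay subsection; the logarithmic factor $(\log s)^\mu$ is absorbed exactly as in the Logarithmic perturbation subsection.
\item For $(\log s)^\mu s^\nu\exp\bigl(-D(\log s)^q\bigr)$, take $b_1=N$ and $h(s)=s^{1/q}$ from Class~2.
\item For $(\log s)^\mu s^\nu\exp(-s^\alpha)$, take $b_1=w$ and $h(s)=(\log s)^{2/\alpha}$ from Class~3.
\item For $(\log s)^\mu s^\nu\exp\bigl(-\tfrac{s}{(\log s)^q}\bigr)$, take $b_1=g$ and $h(s)=(\log s)^\ga$, $\ga\in(1,q)$, from Class~4.
\end{itemize}
In each case set $b_2:=Cb$, the function actually under consideration. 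The factor $(\log s)^\mu s^\nu$ contributes only $\mu\log\log s+\nu\log s$ to $-\log b_2$, which is negligible against the dominant term in $l(s)=-\log b_1(s)$ in Classes~2--4 (and against $D\log s$ in Class~1, since there the polynomial power is strictly separated); hence $\log b_1\sim\log b_2$, i.e.\ $b_1$ and $b_2$ are log-equivalent in the sense of Definition~\ref{def:logeqv}.

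With log-equivalence in hand, I would then check that the same $h$ chosen for $b_1$ also satisfies \eqref{eq:surprise} for $b_2$. This is where the polynomial-type prefactor $(\log s)^\mu s^\nu$ must be controlled: one writes $\frac{b_2(s\pm h(s))}{b_2(s)}$ as the product of the prefactor ratio and the ratio for $b_1$, and verifies that the prefactor ratio tends to $1$ because $\frac{\log(s\pm h(s))}{\log s}\to1$ and $\bigl(1\pm\tfrac{h(s)}{s}\bigr)^\nu\to1$, using $h(s)=o(s)$. The hypothesis of the corollary already grants that $b_2$ is bounded, tail-decreasing and tail-log-convex, which are the structural assumptions required by Proposition~\ref{prop:tailprop2cool}. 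Applying that proposition with this $b_1\in\S$ and this $b_2$ then yields $b_2\in\S$, and its second part — invoked with $\alpha'\in(0,1)$ chosen so that $(b_1)^{\alpha'}$ still satisfies \eqref{eq:finiteintd} (possible since $b_1\in\S[n]$ gives integrability of $b_1(s)s^{n-1}$ with room to spare) and using that $b_2$ is integrable near $-\infty$ by its definition as a genuine tail function — upgrades this to $b_2\in\S[n]$. Finally, dividing out the constant $C$ via Theorem~\ref{thm:coolthm} returns $b\in\S[n]$.

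I expect the main obstacle to be the verification of \eqref{eq:surprise} for $b_2$ with the prefactor present, particularly confirming in the borderline Class~4 that the slowly varying factor does not destroy the delicate cancellation already exploited for $g$; the other steps (log-equivalence, constant and cutoff reductions, and the integrability bookkeeping for $\S[n]$) are routine given the machinery of Theorem~\ref{thm:coolthm} and Proposition~\ref{prop:tailprop2cool}.
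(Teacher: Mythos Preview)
Your plan is correct and is precisely what the paper intends: the corollary is stated without proof as a direct consequence of the four worked Classes together with Theorem~\ref{thm:coolthm} and Propositions~\ref{prop:tailprop}, \ref{prop:tailprop2cool}.

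One small slip to tidy up: when you decompose $\dfrac{b_2(s\pm h(s))}{b_2(s)}$ as (prefactor ratio)$\times$(ratio for $b_1$), you are tacitly treating $b_2:=Cb$ as if it \emph{equalled} $(\log s)^\mu s^\nu b_1(s)$, whereas the hypothesis only gives the asymptotic equivalence $Cb(s)\sim(\log s)^\mu s^\nu b_1(s)$. The clean remedy --- and the one implicit in the paper's structure --- is a two-step transfer: first run your product argument and Proposition~\ref{prop:tailprop2cool} on the explicit function $f(s)=(\log s)^\mu s^\nu b_1(s)$ itself to obtain $f\in\S[n]$ (this is exactly what the ``logarithmic perturbation'' paragraphs in each Class do), and then invoke Proposition~\ref{prop:tailprop} together with Remark~\ref{rem:basbsadfsaofas} to pass from $f$ to $Cb$ via the genuine tail-equivalence $Cb/f\to1$. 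After that, Theorem~\ref{thm:coolthm} removes the constant $C$ as you say.
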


\section{Analogues of Kesten's bound on $\X$}\label{sec:KbddRd}

We start with a simple corollary of Propositions~\ref{prop:quasiKersten} and~\ref{prop:ba:suff_cond1}.

\begin{prop}\label{prop:coolsasd}
Let a function $\upomega:\X\to(0,+\infty)$ be such that \eqref{eq:defofsetOmegala} holds, and
   \begin{equation} \label{eq:aconvomegaoveromega}
\limsup_{\la\to0+}\sup_{x\in\Omega_\la}\frac{(a*\upomega)(x)}{\upomega(x)}\leq1.
\end{equation}
Let also $a\in L^\infty(\X)$ and $\lVert a\rVert_\upomega<\infty$. 
Then, for any $\delta\in(0,1)$, there exist $c_\delta>0$ and $\la=\la(\delta)\in(0,1)$, such that 
\begin{equation}\label{eq:new2}
a^{*n}(x)\leq c_\delta (1+\delta)^{n-1} \min\bigl\{\la,\upomega(x)\bigr\}, \quad x\in\X.
\end{equation}
\end{prop}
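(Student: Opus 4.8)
The plan is to chain the two preceding propositions: first produce the pointwise bound \eqref{eq:ba:Phi_est_suff_cond} for the truncated weight $\phi=\upomega_\la=\min\{\la,\upomega\}$ with a constant $\ga$ arbitrarily close to $1$, and then feed $\phi$ into Proposition~\ref{prop:quasiKersten} to obtain the geometric-in-$n$ estimate. The only real difference from Proposition~\ref{prop:ba:suff_cond1} is that here the $\limsup$ in \eqref{eq:aconvomegaoveromega} is bounded by $1$ rather than being an arbitrary finite $\eta$, so the resulting $\ga$ will be forced down to $1+\delta$.

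First, I would fix $\delta\in(0,1)$ and use \eqref{eq:aconvomegaoveromega} to choose $\la=\la(\delta)\in(0,1)$ small enough that $\sup_{x\in\Omega_\la}\frac{(a*\upomega)(x)}{\upomega(x)}\leq 1+\delta$, with $\Omega_\la\neq\emptyset$ guaranteed by \eqref{eq:defofsetOmegala}. With this $\la$ I repeat the short argument from the proof of Proposition~\ref{prop:ba:suff_cond1}: since $\upomega_\la\leq\upomega$, for $x\in\Omega_\la$ one has $(a*\upomega_\la)(x)\leq(a*\upomega)(x)\leq(1+\delta)\upomega(x)=(1+\delta)\upomega_\la(x)$, while for $x\notin\Omega_\la$ one has $\upomega_\la(x)=\la$ and $(a*\upomega_\la)(x)\leq\la\int_\X a(y)\,dy=\la=\upomega_\la(x)$. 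Hence \eqref{eq:ba:Phi_est_suff_cond} holds with $\phi=\upomega_\la$ and $\ga=\max\{1,1+\delta\}=1+\delta$; carrying out this step directly (rather than invoking Proposition~\ref{prop:ba:suff_cond1} verbatim) also sidesteps that proposition's requirement $\eta\in(0,\infty)$, which would otherwise fail should the $\limsup$ in \eqref{eq:aconvomegaoveromega} vanish.

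It remains to verify the two standing hypotheses of Proposition~\ref{prop:quasiKersten} for this $\phi$. Since $0<\phi\leq\la$ and $a\in L^1(\X)$, the convolution $a*\phi$ is well-defined (indeed $a*\phi\leq\la$). For the weighted norm I would write $\frac{a(x)}{\phi(x)}=\max\Bigl\{\frac{a(x)}{\la},\frac{a(x)}{\upomega(x)}\Bigr\}$ and bound each term separately, using $a\in L^\infty(\X)$ for the first and $\lVert a\rVert_\upomega<\infty$ for the second, which gives $\lVert a\rVert_\phi\leq\max\bigl\{\la^{-1}\lVert a\rVert_{L^\infty},\lVert a\rVert_\upomega\bigr\}<\infty$.

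Proposition~\ref{prop:quasiKersten} then yields $a^{*n}(x)\leq\ga^{n-1}\lVert a\rVert_\phi\,\phi(x)=(1+\delta)^{n-1}\lVert a\rVert_\phi\min\{\la,\upomega(x)\}$ for all $x\in\X$, so \eqref{eq:new2} follows with $c_\delta:=\lVert a\rVert_\phi$. I expect no genuine obstacle, the statement being a corollary; the only points requiring care are the reduction of $\ga$ to $1+\delta$ from the one-sided bound \eqref{eq:aconvomegaoveromega} and the finiteness of $\lVert a\rVert_\phi$, for which both hypotheses $a\in L^\infty(\X)$ and $\lVert a\rVert_\upomega<\infty$ are genuinely needed.
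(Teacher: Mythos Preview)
Your proof is correct and follows the same approach as the paper: invoke (a direct reworking of) Proposition~\ref{prop:ba:suff_cond1} to get \eqref{eq:ba:Phi_est_suff_cond} for $\phi=\upomega_\la$ with $\ga=1+\delta$, check $\lVert a\rVert_\phi<\infty$, and apply Proposition~\ref{prop:quasiKersten}. The paper simply cites Proposition~\ref{prop:ba:suff_cond1} and bounds $\lVert a\rVert_\phi$ via the indicator splitting $\frac{a}{\upomega_\la}\leq\frac{\lVert a\rVert_\infty}{\la}\1_{\X\setminus\Omega_\la}+\frac{a}{\upomega}\1_{\Omega_\la}$, which is equivalent to your $\max$ formula; your choice to redo the truncation argument by hand, noting that it sidesteps the formal requirement $\eta>0$ in Proposition~\ref{prop:ba:suff_cond1}, is a legitimate and careful refinement.
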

\begin{proof} Take any $\delta\in(0,1)$. By Proposition~\ref{prop:ba:suff_cond1}, there exists $\la=\la(\delta,\upomega)\in(0,1)$, such that  \eqref{eq:ba:Phi_est_suff_cond} holds, with $ \phi$ given by \eqref{eq:defofomegala} and $\ga=1+\delta$. 
   Denote  $\|a\|_\infty:=\|a\|_{L^\infty(\X)}$. We have
   \begin{equation}\label{eq:uoiomegalambda}
  \frac{a(x)}{\upomega_\la(x)}\leq \frac{\|a\|_\infty}{\la}\1_{\X\setminus\Omega_\la}(x)+
\frac{a(x)}{\upomega(x)}\1_{\Omega_\la}(x)
\leq \frac{\|a\|_{\infty}}{\la}+\lVert a\rVert_{\upomega}=:c_\delta<\infty,
\end{equation}
and one can apply Proposition~\ref{prop:quasiKersten} that yields the statement.
\end{proof}

\begin{rem}
Note that, for $d=1$, Proposition~\ref{prop:almostdream} implies that, if only $\upomega\in\S[1]$, $a(s)=o(\upomega(s))$, $s\to\infty$, and \eqref{eq:normed} holds, then $(a*\upomega)(s)\sim \upomega(s)$, $s\to\infty$; and then, in particular, \eqref{eq:aconvomegaoveromega} holds. Next, in~the course of the proof of Theorem~\ref{thm:KestenonR} (still for $d=1$), we slightly  weakened the restriction $a=o(\upomega)$ for the case where $a$ itself is sub-exponential, by setting $\upomega:=a^{*m}$ with large enough $m\in\N$, since then $\frac{a}{\upomega}\sim \frac{1}{m}$ may be chosen arbitrary small. As it was mentioned in the Introduction, for the multi-dimensional case, we do not have a theory of sub-exponential densities. Therefore, we consider more `rough' candidate for $\upomega$ to ensure \eqref{eq:aconvomegaoveromega}, namely, $\upomega(x)=a(x)^\alpha$ for $\alpha\in(0,1)$; then, in particular, $a(x)=o(\upomega(x))$, $|x|\to\infty$ if, for example, $a(x)=b(|x|)$, $x\in\X$, with a tail-decreasing function $b$, cf. Definition~\ref{def:Dtclass} below. In particular, the results of this Section, following from \eqref{eq:new2}, yield upper bounds for $a^{*n}(x)$ with the right-hand side heavier than $a(x)$ at infinity.     
\end{rem}

\begin{defn}\label{def:Dtclass}
Let $\Dt$ be the set of all bounded functions $b:\R\to(0,\infty)$, such that $b$ is tail-decreasing (cf.~Definition~\ref{def:taildecreasing}) and
      $\int_0^\infty b(s) s^{d-1}\,ds<\infty$.
Let $\D\subset\Dt$ denote the subset of all functions from $\Dt$ which are (strictly) decreasing to $0$ on $\R_+$.
\end{defn}

\begin{rem}\label{rem:saddssdadsasadsda1}
It is easy to see that, if $b^{\alpha_0}\in\D$ for some $\alpha_0\in(0,1)$, then $b^\alpha\in\D$ for all $\alpha\in[\alpha_0,1]$.
\end{rem}

Consider an analogue of long-tailed functions on $\X$.
\begin{lem}\label{le:rightsidelongtailedmeansspatially}
Let $b:\R_+\to\R_+$ be a long-tailed function (cf.~Definition~\ref{def:longtailedfunc}), and $c(x)=b(|x|)$, $x\in\X$.  Then, for any $r>0$,
  \begin{equation}\label{eq:long-tailed}
        \lim_{|x|\to\infty} \sup_{|y|\leq r} \biggl\lvert \frac{c(x+y)}{c(x)} - 1 \biggr\rvert = 0.
  \end{equation}
\end{lem}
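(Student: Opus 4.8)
The plan is to reduce the $d$-dimensional spatial supremum over $\{y\in\X:|y|\leq r\}$ to the one-dimensional locally uniform asymptotic already recorded for long-tailed functions in \eqref{eq:longtaileddef-uniform}. The bridge is the triangle inequality: for every $y\in\X$ with $|y|\leq r$ one has $\bigl\lvert|x+y|-|x|\bigr\rvert\leq|y|\leq r$, so, writing $\tau=\tau(x,y):=|x+y|-|x|$, we always have $|\tau|\leq r$ and $|x+y|=|x|+\tau$. This is the only place where the spatial structure enters, and it is elementary.

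Consequently, for each fixed $x$ with $|x|$ large, the radial form of $c$ gives
\[
\biggl\lvert\frac{c(x+y)}{c(x)}-1\biggr\rvert=\biggl\lvert\frac{b(|x|+\tau)}{b(|x|)}-1\biggr\rvert\leq\sup_{|\sigma|\leq r}\biggl\lvert\frac{b(|x|+\sigma)}{b(|x|)}-1\biggr\rvert,
\]
and since the right-hand side no longer depends on $y$, taking the supremum over $|y|\leq r$ yields
\[
\sup_{|y|\leq r}\biggl\lvert\frac{c(x+y)}{c(x)}-1\biggr\rvert\leq\sup_{|\sigma|\leq r}\biggl\lvert\frac{b(|x|+\sigma)}{b(|x|)}-1\biggr\rvert.
\]

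Then I would let $|x|\to\infty$. Setting $s=|x|$ and $h=r$ in the locally uniform form \eqref{eq:longtaileddef-uniform} of the long-tailed property (available because $b$ is long-tailed, by the remark following Definition~\ref{def:longtailedfunc}), the right-hand side tends to $0$ as $s=|x|\to\infty$; note that for $|x|$ large enough $|x|-r$ exceeds the threshold $\rho$ beyond which $b$ is positive, so all the quotients are well-defined. This gives \eqref{eq:long-tailed}. There is no genuine obstacle here: the only point requiring care is that the one-dimensional convergence must be invoked in its locally uniform form \eqref{eq:longtaileddef-uniform}, covering both signs of the increment $\sigma$, rather than the pointwise form \eqref{eq:longtaileddef}. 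Using the uniform version is precisely what permits a single estimate to dominate the supremum over all admissible $y$ simultaneously.
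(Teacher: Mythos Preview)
Your argument is correct and essentially identical to the paper's own proof: both reduce the spatial supremum to the one-dimensional locally uniform bound via the observation that $|y|\leq r$ forces $|x+y|-|x|\in[-r,r]$, and then invoke \eqref{eq:longtaileddef-uniform}. The only difference is cosmetic---you make the well-definedness of the quotients explicit, which the paper leaves implicit.
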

\begin{proof}
Evidently, $|y|\leq r$ implies that $h:=|x+y|-|x|\in[-r,r]$ for each $x\in\X$. Therefore,
\[
\sup_{|y|\leq r} \biggl\lvert \frac{b(|x+y|)}{b(|x|)}-1\biggr\rvert \leq \sup_{h\in[-r,r]} \biggl\lvert \frac{b(|x|+h)}{b(|x|)}-1\biggr\rvert \to 0, \quad |x|\to\infty,
\]
because of \eqref{eq:longtaileddef-uniform}.
\end{proof}

We will assume in the sequel, that $a$ is bounded by a radially symmetric function:
\begin{equation}\label{ass:newA5}
     \text{There exists $b^+\in\D$, such that } 
     a(x)\leq b^+(|x|), \ \text{for a.a.~} x\in\X.
\end{equation}

We start with the following sufficient condition.

\begin{prop}\label{prop:poldecbdd}
Let \eqref{ass:newA5} hold with $b^+\in\D$ which is log-equivalent, cf.~Definition~\ref{def:logeqv}, to the function $b$, given by 
\begin{equation}\label{eq:polynomialb}
b(s):=\1_{\R_+}(s)\frac{M}{(1+s)^{d+\mu }}, \quad s\in\R,
\end{equation}
for some $\mu,M>0$.
Then there exists $\alpha_0\in(0,1)$, such that, for all $\alpha\in(\alpha_0,1)$,
 the function $\upomega(x)=b(|x|)^\alpha$, $x\in\X$, satisfies \eqref{eq:aconvomegaoveromega}.
 \end{prop}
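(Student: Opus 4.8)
The plan is to verify \eqref{eq:aconvomegaoveromega} through the equivalent statement $\limsup_{|x|\to\infty}\frac{(a*\upomega)(x)}{\upomega(x)}\leq 1$. Since the function $b$ in \eqref{eq:polynomialb} is strictly decreasing on $\R_+$, the weight $\upomega(x)=b(|x|)^\alpha=M^\alpha(1+|x|)^{-\alpha(d+\mu)}$ is radially strictly decreasing, so every sublevel set $\Omega_\la$ from \eqref{eq:defofsetOmegala} equals a complement of a ball $\{|x|>r_\la\}$ with $r_\la\to\infty$ as $\la\searrow 0$; hence $\limsup_{\la\to0+}\sup_{x\in\Omega_\la}(\cdot)$ coincides with $\limsup_{|x|\to\infty}(\cdot)$. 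I would write $\gamma:=\alpha(d+\mu)$ and set $\alpha_0:=\frac{d}{d+\mu}\in(0,1)$, so that for $\alpha\in(\alpha_0,1)$ one has $\gamma>d$ and therefore $\upomega\in L^1(\X)$. Fixing such an $\alpha$, put $\eps:=\frac{1-\alpha}{2}>0$; the log-equivalence of $b^+$ and $b$ (Definition~\ref{def:logeqv}) then provides $s_\eps>0$ with $b^+(s)\leq b(s)^{1-\eps}=M^{1-\eps}(1+s)^{-\beta}$ for $s\geq s_\eps$, where $\beta:=(1-\eps)(d+\mu)$. The decisive gain is the strict inequality $\beta>\gamma$, which holds precisely because $1-\eps=\frac{1+\alpha}{2}>\alpha$: this encodes that $\upomega$ has a strictly heavier polynomial tail than the bound $b^+$ controlling $a$.

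For a fixed $r>s_\eps$ and $|x|=R>2r$, I would split $(a*\upomega)(x)=\int_\X a(y)\upomega(x-y)\,dy$ over the three disjoint regions $A=\{|y|\leq r\}$, $B=\{|x-y|\leq r\}$, and $C=\{|y|>r,\ |x-y|>r\}$. On $A$, Lemma~\ref{le:rightsidelongtailedmeansspatially} applied to the long-tailed function $b^\alpha$ yields $\sup_{|y|\leq r}\bigl\lvert\upomega(x-y)/\upomega(x)-1\bigr\rvert\to0$, whence $\frac{1}{\upomega(x)}\int_A a(y)\upomega(x-y)\,dy\to\int_{|y|\leq r}a(y)\,dy\leq 1$ as $R\to\infty$. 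On $B$, the inclusion $|x-y|\leq r$ forces $|y|\geq R-r$, so monotonicity of $b^+$ gives $a(y)\leq b^+(R-r)$, which I would pull out, leaving $\int_{|z|\leq r}\upomega(z)\,dz=:C_r<\infty$; dividing by $\upomega(x)=M^\alpha(1+R)^{-\gamma}$ and using $b^+(R-r)\leq M^{1-\eps}(1+R-r)^{-\beta}$ produces a quantity of order $R^{\gamma-\beta}\to0$.

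The region $C$ is the heart of the argument, and I would split it once more according to whether $|x-y|\geq R/2$ or $|x-y|<R/2$. When $|x-y|\geq R/2$ one has $\upomega(x-y)\leq 2^\gamma\upomega(x)$, so this part is bounded by $2^\gamma\upomega(x)\int_{|y|>r}a(y)\,dy$, and $\int_{|y|>r}a(y)\,dy\to0$ as $r\to\infty$ since $a\in L^1(\X)$. When $|x-y|<R/2$ one necessarily has $|y|>R/2$, so $a(y)\leq b^+(R/2)$ may be extracted, leaving $\int_{|x-y|>r}\upomega(x-y)\,dy\leq\|\upomega\|_{L^1(\X)}<\infty$ (this is exactly where $\gamma>d$ is used); dividing by $\upomega(x)$ and invoking $b^+(R/2)\leq M^{1-\eps}(1+R/2)^{-\beta}$ again gives a term of order $R^{\gamma-\beta}\to0$. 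This is the polynomial analogue of the classical sub-exponential cancellation: excising both bulk regions $A$ and $B$ leaves a remainder that is a vanishing fraction of the tail.

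Collecting the four estimates, for each fixed $r$ I would obtain $\limsup_{R\to\infty}\frac{(a*\upomega)(x)}{\upomega(x)}\leq\int_{|y|\leq r}a(y)\,dy+2^\gamma\int_{|y|>r}a(y)\,dy=1+(2^\gamma-1)\int_{|y|>r}a(y)\,dy$, and letting $r\to\infty$ gives the desired bound $\leq1$. The main obstacle I anticipate is organising the region $C$ so that the heavier tail of $\upomega$ (the condition $\alpha<1$, turned into $\beta>\gamma$) is exploited correctly in \emph{both} sub-regions simultaneously; everything else reduces to bookkeeping with the polynomial upper bound extracted from log-equivalence and the locally uniform long-tailedness supplied by Lemma~\ref{le:rightsidelongtailedmeansspatially}.
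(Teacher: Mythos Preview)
Your argument is correct. It differs from the paper's proof in the decomposition and in how the limit is taken. The paper splits the integration domain in the $y$-variable into three shells with a \emph{growing} inner radius $h(|x|)=|x|^\delta$: a near region $\{|y|\le |x|^\delta\}$, a middle region $\{|x|^\delta<|y|\le |x|/2\}$, and a far region $\{|y|>|x|/2\}$; the growing radius lets each of the three pieces converge directly as $|x|\to\infty$, with no auxiliary parameter. You instead keep a \emph{fixed} radius $r$, separate out both bulk caps $\{|y|\le r\}$ and $\{|x-y|\le r\}$, sub-split the complement at $|x-y|=|x|/2$, and finish with the double limit $|x|\to\infty$ then $r\to\infty$; this is closer in spirit to the classical Chistyakov-type treatment of sub-exponential convolution tails. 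Both routes rest on the same two inputs---the long-tailedness of $b^\alpha$ (your region $A$, the paper's $D_1$) and the strict gap $1-\eps>\alpha$ between the decay exponent controlling $a$ and that of $\upomega$ (your $\beta>\gamma$, the paper's $I_3\to0$). A minor bonus of your setup is that your threshold $\alpha_0=\dfrac{d}{d+\mu}$ is smaller than the paper's $\alpha_0=\dfrac{d+\mu/2}{d+\mu}$, so you obtain the conclusion for a slightly larger range of $\alpha$.
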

\begin{proof} Set $\alpha_0:=\frac{d+\frac{\mu }{2}}{d+\mu }\in(0,1)$.
Take arbitrary $\alpha\in(\alpha_0,1)$ and $\epsilon\in(0,1-\alpha)$. 
Take also an arbitrary $\delta\in(0,1)$, and define $h(s)=s^\delta$, $s>0$. By \eqref{eq:asfdsafsadfdsewtewtegw}, applied to $b_1=b$ and $b_2=b^+$, there exists $s_\delta>2r$ such that, for all $ s>s_\delta$,
 \begin{equation}\label{eq:choiceofsdelta}
   h(s)<\frac{s}{2}, \qquad  b^+(s)\leq \bigl(b(s)\bigr)^{1-\epsilon}.
 \end{equation}
 For an arbitrary $x\in\X$ with $|x|>s_\delta$, we have a disjoint expansion $\X=D_1(x)\sqcup D_2(x)\sqcup D_3(x)$, where
\begin{gather*}
D_1(x):=\bigl\{|y|\leq h(|x|)\bigr\}, \quad D_2(x):=\Bigl\{h(|x|)<|y|\leq\frac{|x|}{2}\Bigr\},\quad
 D_3(x)=\Bigl\{|y|\geq \frac{|x|}{2}\Bigr\}.
\end{gather*}
Then, $\frac{(a*\upomega)(x)}{\upomega(x)}=I_1(x)+I_2(x)+I_3(x)$, where
\[
I_j(x):=\int_{D_j(x)}a(y)\frac{(1+|x|)^{(d+\mu )\alpha}}{(1+|x-y|)^{(d+\mu )\alpha}}dy, \quad j=1,2,3.
\]
Using the inequality $|x-y|\geq \bigl\lvert |x|-|y|\bigr\rvert$, $x,y\in\X$, one has that $|x-y|\geq |x|-|y|\geq |x|-|x|^\delta$ for $y\in D_1(x)$, $|x|>s_\delta$. Then
\begin{align*}
  I_1(x)\leq \biggl(\frac{1+|x|}{1+|x|-|x|^\delta}\biggr)^{(d+\mu )\alpha}\int_{D_1(x)} a (y)dy\to 1, \quad |x|\to\infty.
\end{align*}
Next, we  have, for any $|y|<\frac{|x|}{2}$, that $1+|x-y|\geq 1+|x|-|y|\geq\frac{1}{2}(1+|x|)$; therefore,
\begin{align*}
  I_2(x)\leq 2^{(d+\mu )\alpha} \int_{\{|y|>|x|^\delta\}} a (y)dy\to0, \quad |x|\to\infty.
\end{align*}
Finally, by \eqref{ass:newA5} and \eqref{eq:choiceofsdelta}, the inclusions $y\in D_3(x)$ and $|x|>s_\delta$ imply
$a(y)\leq b^+(|y|)\leq b(|y|)^{1-\epsilon}\leq b\bigl(\frac{|x|}{2}\bigr)^{1-\epsilon}$,
 and, therefore,
\begin{align*}
  I_3(x)&\leq
M\frac{(1+|x|)^{(d+\mu )\alpha}}{\bigl(1+\frac{|x|}{2}\bigr)^{(d+\mu)(1-\epsilon) }}
\int_{D_3(x)}\frac{1}{(1+|x-y|)^{(d+\mu )\alpha_0}}dy\\
&\leq M\frac{(1+|x|)^{(d+\mu )\alpha}}{\bigl(1+\frac{|x|}{2}\bigr)^{(d+\mu)(1-\epsilon) }}
\int_{\X}\frac{1}{(1+|y|)^{d+\frac{\mu }{2}}}dy\to0, \quad |x|\to\infty,
\end{align*}
as $1-\epsilon>\alpha$.
Since $b$ is decreasing on $\R_+$, we have, by \eqref{eq:defofsetOmegala}, that, for any $\la>0$, there exists $\rho_\la>0$, such that $\Omega_\la=\{x\in\X:|x|>\rho_\la\}$. This yields \eqref{eq:aconvomegaoveromega}.
\end{proof}

\begin{lem}\label{lem:newcool}
Let $b\in L^1(\R)$ be even, positive, decreasing to $0$ on the whole $\R_+$, and long-tailed function. Suppose that there exist $B, r_b,\rho_b>0$, such that
\begin{equation}\label{eq:quasisubexp}
  \int_{r_b}^\infty b(s-\tau)b(\tau)\,d\tau\leq B b(s), \quad s>\rho_b.
\end{equation}
Suppose also that
     \begin{equation}\label{old:newA5}
     \lim_{|x|\to\infty} \frac{ a (x)|x|^{d-1}}{b(|x|)}=0.
     \end{equation}
Then the inequality \eqref{eq:aconvomegaoveromega} holds for $\upomega(x):=b(|x|)$, $x\in\X$.
\end{lem}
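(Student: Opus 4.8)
The plan is to exploit that, since $b$ is decreasing to $0$ on $\R_+$, the sublevel set in \eqref{eq:defofsetOmegala} has the form $\Omega_\la=\{x\in\X:|x|>\rho_\la\}$ with $\rho_\la\to\infty$ as $\la\searrow0$; consequently $\sup_{x\in\Omega_\la}$ is non-increasing in $\la$ and \eqref{eq:aconvomegaoveromega} for $\upomega(x)=b(|x|)$ is exactly equivalent to $\limsup_{|x|\to\infty}\frac{(a*\upomega)(x)}{\upomega(x)}\le 1$. Because $b$ is long-tailed, I first fix an insensitivity function $h:(0,\infty)\to(0,\infty)$ with $h(s)<\frac{s}{2}$, $h(s)\to\infty$, for which \eqref{eq:uniformlongtailedR} holds; since $b$ is tail-decreasing, Proposition~\ref{prop:useful} then gives $\frac{b(s-h(s))}{b(s)}\to1$. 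Writing $t:=|x|$, for $t$ large I split $\X$ into the near zone $\{|y|\le h(t)\}$ and the far zone $\{|y|>h(t)\}$ and decompose $\frac{(a*\upomega)(x)}{\upomega(x)}=I_1(x)+I_2(x)$ accordingly.

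The near zone produces the constant $1$. Indeed, $|y|\le h(t)$ gives $|x-y|\ge t-h(t)$, so by monotonicity $b(|x-y|)\le b(t-h(t))$, whence $I_1(x)\le \frac{b(t-h(t))}{b(t)}\int_{|y|\le h(t)}a(y)\,dy\le \frac{b(t-h(t))}{b(t)}\to1$, using $\int_\X a=1$.

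The heart of the argument is the far zone, where \eqref{old:newA5} and the one-dimensional bound \eqref{eq:quasisubexp} come into play. Fix $\eps>0$. For $t$ large, $h(t)$ exceeds the threshold beyond which \eqref{old:newA5} gives $a(y)\le \eps\,b(|y|)\,|y|^{-(d-1)}$, so
\[
I_2(x)\le \frac{\eps}{b(t)}\int_{|y|>h(t)}\frac{b(|y|)}{|y|^{d-1}}\,b(|x-y|)\,dy.
\]
The crucial observation is that, in polar coordinates $y=s\theta$ with $dy=s^{d-1}\,ds\,d\theta$, the weight $|y|^{-(d-1)}$ supplied by \eqref{old:newA5} cancels exactly the Jacobian $s^{d-1}$, reducing the $d$-dimensional integral to $\int_{h(t)}^\infty b(s)\bigl(\int_{S^{d-1}}b(|x-s\theta|)\,d\theta\bigr)\,ds$. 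Since $|x-s\theta|\ge\bigl\lvert|x|-s\bigr\rvert=|t-s|$ and $b$ is decreasing, the inner angular integral is at most $\omega_{d-1}\,b(|t-s|)$, where $\omega_{d-1}$ denotes the surface measure of the unit sphere $S^{d-1}$. Because $b$ is even, \eqref{eq:quasisubexp} reads $\int_{r_b}^\infty b(s)\,b(|t-s|)\,ds\le B\,b(t)$ for $t>\rho_b$; as $h(t)\ge r_b$ for $t$ large, extending the domain only increases the (nonnegative) integrand, so $\int_{h(t)}^\infty b(s)\,b(|t-s|)\,ds\le B\,b(t)$. Hence $I_2(x)\le \eps\,\omega_{d-1}B$, and combining with the near-zone estimate gives $\limsup_{|x|\to\infty}\frac{(a*\upomega)(x)}{\upomega(x)}\le 1+\eps\,\omega_{d-1}B$. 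Letting $\eps\searrow0$ finishes the proof.

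I expect the main obstacle to be engineering this far-zone reduction correctly: recognizing that the $|y|^{-(d-1)}$ factor furnished by \eqref{old:newA5} is precisely what annihilates the radial Jacobian, and that bounding the angular integral by $b$ evaluated at $\bigl\lvert|x|-s\bigr\rvert$ turns the genuinely multi-dimensional convolution into the one-dimensional quasi-subexponential integral \eqref{eq:quasisubexp}. The rest is bookkeeping — making sure $h(t)$ simultaneously surpasses the thresholds $r_b$, $\rho_b$ and the range where \eqref{old:newA5} is effective, and taking the limits in $\eps$ and $|x|$ in the correct order.
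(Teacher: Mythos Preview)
Your proof is correct and follows essentially the same route as the paper's: split the convolution into a near zone and a far zone, use the long-tail property in the near zone, and in the far zone use \eqref{old:newA5} to cancel the radial Jacobian and reduce to the one-dimensional bound \eqref{eq:quasisubexp} via $|x-y|\ge\bigl\lvert|x|-|y|\bigr\rvert$ and monotonicity of $b$. The only cosmetic difference is that the paper splits at a \emph{fixed} radius $r=r(\delta)>r_b$ (invoking only the basic long-tail estimate \eqref{eq:long-tailed}), whereas you split at the growing radius $h(|x|)$ and invoke $h$-insensitivity; this extra machinery is not needed but does no harm.
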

\begin{proof}
The assumption \eqref{old:newA5} implies that
  \begin{equation}\label{eq:qr}
        g(r):= \sup_{|x|\geq r} \frac{ a (x)|x|^{d-1}}{\upomega(x)} \to 0,\quad r\to\infty.
  \end{equation}
Take an arbitrary $\delta\in(0,1)$. By \eqref{eq:qr}, one can take then  $r=r(\delta)>r_b$ such that $g(r)<\delta$. Next, by Lemma~\ref{le:rightsidelongtailedmeansspatially}, the inequality \eqref{eq:long-tailed} holds for $c=\upomega$. Therefore, there exists $\rho=\rho(\delta,r)=\rho(\delta)>\max\{r,\rho_b\}$, such that
  \begin{equation}\label{eq:1delta}
      \sup_{|y|\leq r}\frac{\upomega(x-y)}{\upomega(x)}<1+\delta, \quad |x|\geq\rho.
  \end{equation}
  Then, by \eqref{eq:qr} and \eqref{eq:1delta}, we have
  \begin{align}
    ( a *\upomega)(x) &= \upomega(x)\int_{\{|y|\leq r\}}  a (y)\frac{\upomega(x-y)}{\upomega(x)}dy+ \upomega(x)\int_{\{|y|\geq r\}} \frac{ a (y)|y|^{d-1}}{\upomega(y)}\frac{\upomega(x-y)\upomega(y)}{\upomega(x)|y|^{d-1}}dy\nonumber \\ &\leq \upomega(x) (1+\delta)\int_{|y|\leq r}  a (y)dy  + g(r)\upomega(x) \int_{\{|y|\geq r\}} \frac{b(|x-y|) b(|y|)}{b(|x|)|y|^{d-1}}dy,\nonumber\\
      \intertext{and using that $b$ is decreasing on $\R_+$ and the inequality $|x-y|\geq \bigl\lvert |x|-|y|\bigr\rvert$, one gets, cf.~\eqref{eq:normed} and recall that $g(r)<\delta$,}
      &\leq \upomega(x) (1+\delta)+\delta\upomega(x) \int_{\{|y|\geq r\}} \frac{b\bigl(\bigl\lvert |x|-|y|\bigr\rvert\bigr) b(|y|)}{b(|x|)|y|^{d-1}}dy,\nonumber\\ 
      &\leq \upomega(x) (1+\delta)+\delta\upomega(x) \sigma_d\int_r^\infty
      \frac{b\bigl(|x|-p\bigr) b(p)}{b(|x|)}dp,\label{eq:recovered}
      \end{align}
      where $\sigma_d$ is the hyper-surface area of a unit sphere in $\X$ (note that we have omitted an absolute value, as $b$ is even). Finally, using that $r>r_b$ and $\rho>\rho_b$, we obtain from \eqref{eq:quasisubexp} and \eqref{eq:recovered} that, for any $\delta\in(0,1)$,
      \[
      ( a *\upomega)(x)
      \leq \upomega(x) \bigl(1+\delta(1+\sigma_d B)\bigr), \quad |x|>\rho(\delta),
      \]
  that implies the statement.
\end{proof}

\begin{lem}\label{le:cruciallemma123}
  Let $b\in \S[1]$ be an even function. Suppose that there exists $\alpha'\in(0,1)$ such that $b^{\alpha'}\in\D$, i.e.
 \begin{equation}\label{eq:finite_d-1_momentalpha}
\int_0^\infty b(s)^{\alpha'} s^{d-1}\,ds<\infty,
\end{equation}
and, for any $\alpha\in(\alpha',1)$,
  \begin{equation}\label{eq:quickerthaninvpol}
      \lim_{|x|\to\infty} \frac{ a (x)}{b(|x|)^{\alpha}}|x|^{d-1}=0.
   \end{equation}
   Then there exists $\alpha_0\in(\alpha',1)$ such that the inequality \eqref{eq:aconvomegaoveromega} holds for  $\upomega(x)=b(|x|)^\alpha$, $x\in\X$ for all $\alpha\in(\alpha_0,1)$.
\end{lem}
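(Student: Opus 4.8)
The plan is to deduce the claim from Lemma~\ref{lem:newcool} applied not to $b$ itself but to its power $s\mapsto b(s)^\alpha$ for $\alpha<1$ sufficiently close to $1$. Indeed, \eqref{eq:aconvomegaoveromega} for $\upomega(x)=b(|x|)^\alpha$ is exactly the conclusion of Lemma~\ref{lem:newcool} when the radial profile there is taken to be $b^\alpha$; so it suffices to produce a threshold $\alpha_0\in(\alpha',1)$ and to verify, for every $\alpha\in(\alpha_0,1)$, that $b^\alpha$ satisfies all hypotheses of that lemma.

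First I would fix $\alpha_0$ by invoking Theorem~\ref{thm:coolthm}(ii) with $n=1$. Since $b^{\alpha'}\in\D$, relation \eqref{eq:finite_d-1_momentalpha} holds; because $d\geq1$ this forces $b^{\alpha'}\in L^1(\R_+)$, and the evenness of $b$ upgrades this to $b^{\alpha'}\in L^1(\R)$, which is precisely \eqref{eq:finiteintd} for $b_{\alpha'}$ in the case $n=1$. Theorem~\ref{thm:coolthm}(ii) then supplies $\alpha_0\in(\alpha',1)$ together with constants $B_0,\rho_0>0$ such that $b_\alpha\in\S[1]$ and the convolution estimate \eqref{eq:ineqinsteadsubexp} hold for all $\alpha\in(\alpha_0,1]$. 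This is the $\alpha_0$ claimed in the statement.

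Now fix $\alpha\in(\alpha_0,1)$ and check the hypotheses of Lemma~\ref{lem:newcool} for $b^\alpha$. The structural ones are routine: $b^\alpha$ is even and positive because $b$ is; it lies in $L^1(\R)$ since $b_\alpha\in\S[1]$; it is long-tailed because $b^\alpha(s+\tau)/b^\alpha(s)=\bigl(b(s+\tau)/b(s)\bigr)^\alpha\to1$; and, crucially, $b^\alpha$ is \emph{decreasing to $0$ on the whole of $\R_+$} — not merely tail-decreasing — since $b^{\alpha'}\in\D$ yields $b^\alpha\in\D$ for all $\alpha\in[\alpha',1]$ by Remark~\ref{rem:saddssdadsasadsda1}. (This is exactly where the hypothesis $b^{\alpha'}\in\D$, rather than mere tail-monotonicity, is used.) The decay requirement \eqref{old:newA5} for $b^\alpha$ is nothing but \eqref{eq:quickerthaninvpol}, assumed for every $\alpha\in(\alpha',1)\supset(\alpha_0,1)$. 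Finally, the quasi-subexponential bound \eqref{eq:quasisubexp} for $b^\alpha$ is obtained by restricting the nonnegative integrand in \eqref{eq:ineqinsteadsubexp} to $[r_b,\infty)$: for any $r_b>0$ one gets $\int_{r_b}^\infty b(s-\tau)^\alpha b(\tau)^\alpha\,d\tau\leq B_0\,b(s)^\alpha$ for $s\geq\rho_0$, so \eqref{eq:quasisubexp} holds with $B=B_0$ and $\rho_b=\rho_0$.

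With every hypothesis verified, Lemma~\ref{lem:newcool} applied to $b^\alpha$ delivers \eqref{eq:aconvomegaoveromega} for $\upomega(x)=b(|x|)^\alpha$, which completes the argument. The only substantive step is securing the convolution inequality \eqref{eq:quasisubexp} for the perturbed profile $b^\alpha$: this is where the full regularity packaged in $b\in\S[1]$ (log-convexity, the $h$-insensitivity in \eqref{eq:surprise}, and sub-exponentiality on $\R$) is consumed, through the power-perturbation stability \eqref{eq:ineqinsteadsubexp} of Theorem~\ref{thm:coolthm}. Everything else amounts to checking that raising to the power $\alpha$ preserves monotonicity, evenness, integrability, and the long-tail property.
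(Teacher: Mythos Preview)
Your proposal is correct and follows exactly the paper's route: invoke Theorem~\ref{thm:coolthm}(ii) with $n=1$ (using \eqref{eq:finite_d-1_momentalpha} to verify \eqref{eq:finiteintd} for $b_{\alpha'}$) to obtain $\alpha_0$ and the bound \eqref{eq:ineqinsteadsubexp}, then feed $b^\alpha$ into Lemma~\ref{lem:newcool} using \eqref{eq:quickerthaninvpol} for \eqref{old:newA5} and \eqref{eq:ineqinsteadsubexp} for \eqref{eq:quasisubexp}. You simply spell out more carefully than the paper the verification that $b^\alpha$ meets all the structural hypotheses of Lemma~\ref{lem:newcool} (evenness, positivity, global monotonicity via $b^{\alpha'}\in\D$ and Remark~\ref{rem:saddssdadsasadsda1}, long-tailedness, integrability), which the paper leaves implicit.
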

  \begin{proof}
  We apply the second part of Theorem~\ref{thm:coolthm}, for $n=1$; note that then \eqref{eq:finite_d-1_momentalpha} implies \eqref{eq:finiteintd}. As a result, for any $\alpha\in(\alpha_0,1)$, the inequality \eqref{eq:ineqinsteadsubexp} holds; in particular, then \eqref{eq:quasisubexp} holds with $b$ replaced by $b^\alpha$. The latter together with \eqref{eq:quickerthaninvpol} allows to apply Lemma~\ref{lem:newcool} for $b$ replaced by $b^\alpha$, that fulfils the statement.
\end{proof}

 \begin{rem}
 Note that, by Remark~\ref{rem:saddssdadsasadsda1}, \eqref{eq:finite_d-1_momentalpha} implies that $b\in\D$ and hence, cf.~Definition~\ref{def:super-subexp}, $b\in\S[d]$.
 \end{rem}

As a result, one gets a counterpart of Proposition~\ref{prop:poldecbdd}, for the case when the function $b^+$ in \eqref{ass:newA5} decays faster than polynomial and $d>1$.

\begin{prop}\label{prop:quickerdec}
  Let \eqref{ass:newA5} hold for a function $b^+\in\D$ which is loq-equivalent to a function $b\in\S[1]$. For $d>1$, we suppose, additionally, that
\begin{equation}\label{eq:quicklydecreasing}
\lim_{s\to\infty} b(s)s^\nu =0 \qquad \text{for all $\nu\geq1$.}
\end{equation}
Then there exists $\alpha_0\in(0,1)$, such that, for all $\alpha\in(\alpha_0,1)$,
 the function $\upomega(x)=b(|x|)^\alpha$, $x\in\X$ satisfies \eqref{eq:aconvomegaoveromega}.
\end{prop}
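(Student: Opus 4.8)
The plan is to deduce the statement directly from Lemma~\ref{le:cruciallemma123}, so the entire argument reduces to verifying that lemma's three hypotheses for the given $b$. First, observe that $\upomega(x)=b(|x|)^\alpha$ depends only on $b|_{\R_+}$, and that \eqref{eq:aconvomegaoveromega} is a statement about the behaviour of $\upomega$ near infinity (the sets $\Omega_\la$ collapse onto a neighbourhood of $\infty$ as $\la\to0+$). Hence I may freely replace $b$ by its even extension $s\mapsto b(|s|)$ and, if needed, modify it on a compact set to make it strictly decreasing on all of $\R_+$; by Theorem~\ref{thm:coolthm} (applied to the cut-off $\breve{b}$) this preserves membership in $\S[1]$ and alters neither the tail nor \eqref{eq:aconvomegaoveromega}. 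Thus the evenness required by Lemma~\ref{le:cruciallemma123} holds without loss of generality.

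Next I verify the moment condition \eqref{eq:finite_d-1_momentalpha}, $\int_0^\infty b(s)^{\alpha'}s^{d-1}\,ds<\infty$, for a suitable $\alpha'\in(0,1)$, and here the two dimensional regimes genuinely diverge. For $d=1$ the weight is absent and I invoke \eqref{eq:Eimpliespolbdd}, which gives $b(s)\leq Bs^{-1-\delta}$ for large $s$; then $b^{\alpha'}\in L^1(\R_+)$ as soon as $(1+\delta)\alpha'>1$, i.e.~for any $\alpha'\in\bigl(\frac{1}{1+\delta},1\bigr)$. For $d>1$ this polynomial bound is too weak to absorb the factor $s^{d-1}$ (it would demand $\alpha'>\frac{d}{1+\delta}>1$), which is exactly why the extra hypothesis \eqref{eq:quicklydecreasing} is imposed: it yields $b(s)\leq C_\nu s^{-\nu}$ for every $\nu\geq1$, so choosing $\nu$ with $\nu\alpha'>d$ makes $\int_0^\infty b(s)^{\alpha'}s^{d-1}\,ds<\infty$ for every $\alpha'\in(0,1)$. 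In both cases the even, bounded, tail-decreasing function $b^{\alpha'}$ satisfies \eqref{eq:finite_d-1_momentalpha}, hence $b^{\alpha'}\in\D$.

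Then I check the decay condition \eqref{eq:quickerthaninvpol} for each $\alpha\in(\alpha',1)$. The log-equivalence of $b^+$ and $b$ gives, through \eqref{eq:asfdsafsadfdsewtewtegw} (applied with $b_1=b$, $b_2=b^+$), that for any $\eps>0$ one has $a(x)\leq b^+(|x|)\leq b(|x|)^{1-\eps}$ for $|x|$ large. Fixing $\eps:=\frac{1-\alpha}{2}$, so that $1-\eps-\alpha=\frac{1-\alpha}{2}>0$, the quotient in \eqref{eq:quickerthaninvpol} is dominated by $b(|x|)^{(1-\alpha)/2}|x|^{d-1}$. For $d=1$ this is merely $b(|x|)^{(1-\alpha)/2}\to0$ since $b\to0$; for $d>1$ I rewrite it as $\bigl(b(|x|)\,|x|^{2(d-1)/(1-\alpha)}\bigr)^{(1-\alpha)/2}$ and apply \eqref{eq:quicklydecreasing} with $\nu=\frac{2(d-1)}{1-\alpha}\geq2\geq1$, forcing the expression to $0$. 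Hence \eqref{eq:quickerthaninvpol} holds for all $\alpha\in(\alpha',1)$, and Lemma~\ref{le:cruciallemma123} now supplies the required $\alpha_0\in(\alpha',1)$ and the conclusion.

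The only genuinely delicate point is the dimensional dichotomy in the second step. The regular sub-exponential bound \eqref{eq:Eimpliespolbdd}, available for every $b\in\S$, controls the $L^1$-size of fractional powers but cannot control their $s^{d-1}$-weighted size once $d\geq2$; it is precisely the faster-than-polynomial decay \eqref{eq:quicklydecreasing} that repairs this gap, while still leaving a whole interval of admissible $\alpha'$ for the subsequent decay estimate \eqref{eq:quickerthaninvpol} to close.
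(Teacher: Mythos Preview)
Your proof is correct and follows essentially the same route as the paper: reduce to Lemma~\ref{le:cruciallemma123}, verify \eqref{eq:finite_d-1_momentalpha} via \eqref{eq:Eimpliespolbdd} when $d=1$ and via \eqref{eq:quicklydecreasing} when $d>1$, and verify \eqref{eq:quickerthaninvpol} by combining the log-equivalence bound \eqref{eq:asfdsafsadfdsewtewtegw} with \eqref{eq:quicklydecreasing}. You are in fact slightly more careful than the paper in justifying the evenness hypothesis of Lemma~\ref{le:cruciallemma123}, which the paper leaves implicit.
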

\begin{proof}
We will use Lemma~\ref{le:cruciallemma123}.
  For $d>1$,  one gets from \eqref{eq:quicklydecreasing} that, for any $\nu >0$, there exists $\rho_\nu \geq1$, such that $b(s)\leq s^{-\nu}$, $s>\rho_\nu $. In particular, for any $\alpha'\in(0,1)$, one has \eqref{eq:finite_d-1_momentalpha}. For $d=1$, $\sigma=0$, we use instead that $b\in\S[1]$ implies \eqref{eq:Eimpliespolbdd}, and hence we get \eqref{eq:finite_d-1_momentalpha}, if only $\alpha'\in\bigl(\frac{1}{1+\delta},1\bigr)$. 

Next, for any $d\in\N$, choose an arbitrary $\alpha\in(\alpha',1)$. 
 Then, by \eqref{ass:newA5} and \eqref{eq:asfdsafsadfdsewtewtegw} applied for $b_1=b$ and $b_2=b^+$, we have that, for any $\epsilon\in(0,1-\alpha)$, there exists $\rho_\epsilon>0$, such that, for all $|x|>\rho_\eps$,
\begin{equation}\label{eq:safwwar}
  \frac{ a (x)}{b(|x|)^{\alpha}}|x|^{d-1}
  \leq b(|x|)^{1-\epsilon-\alpha}|x|^{d-1}
=\Bigl(b(|x|) |x|^{\nu}\Bigr)^{1-\epsilon-\alpha},
\end{equation}
where $\nu=\frac{d-1}{1-\epsilon-\alpha}\geq 0$, as $\alpha<1-\epsilon$. Clearly, \eqref{eq:safwwar} together with \eqref{eq:quicklydecreasing}, in the case $d>1$, imply \eqref{eq:quickerthaninvpol}, that fulfills the statement.
\end{proof}

\begin{rem}
Note that, in Proposition~\ref{prop:poldecbdd}, for the function $b$ given by \eqref{eq:polynomialb}, one can choose $\alpha'\in(0,1)$ such that \eqref{eq:finite_d-1_momentalpha} holds. The same property we have checked above for the function $b$ which satisfies assumptions of Proposition~\ref{prop:quickerdec}. As a result, by~Remark~\ref{rem:saddssdadsasadsda1}, the functions $\upomega(x)=b(|x|)^\alpha$, $x\in\X$ in these Propositions are integrable for all $\alpha\in(\alpha_0,1)$.
\end{rem}

\begin{defn}\label{def:super-subexp-tilde}
Let the set $\St[d]\subset\S[d]$, $d\in\N$ be defined as follows. Let 
$\St[1]$ be just the set $\S[1]$, whereas, for $d>1$, let $\St[d]$ be the set of all functions  $b\in\S[d]$, such that $b$ is either given by \eqref{eq:polynomialb} for some $M,\mu>0$ or $b$ satisfies \eqref{eq:quicklydecreasing}.
\end{defn}

\begin{rem}
All functions in Classes 2--4 in Subsection~\ref{subsec:examples} evidently satisfy \eqref{eq:quicklydecreasing} and hence belong to $\St[d]$. 
\end{rem}

\begin{thm}\label{thm:mainRd}
Let \eqref{ass:newA5} hold with $b^+\in\D$ which is log-equivalent to a function $b\in\St[d]$. Then there exists $\alpha_0\in(0,1)$, such that, for any $\delta\in(0,1)$ and $\alpha\in(\alpha_0,1)$, there exist $c_1=c_1(\delta,\alpha)>0$ and $\la=\la(\delta,\alpha)\in(0,1)$, such that
\begin{equation*}
a^{*n}(x)\leq c_1 (1+\delta)^{n} \min\bigl\{\la,b(|x|)^\alpha\bigr\}, \quad x\in\X.
\end{equation*}
In particular, for some $c_2=c_2(\delta,\alpha)>0$ and $s_\alpha=s_\alpha(\delta)>0$,
\begin{equation}\label{eq:KbRd}
a^{*n}(x)\leq c_2 (1+\delta)^n b(|x|)^\alpha, \quad |x|>s_\alpha, n\in\N.
\end{equation}
\end{thm}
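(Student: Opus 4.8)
The plan is to deduce the statement from Proposition~\ref{prop:coolsasd}, applied to the weight $\upomega(x):=b(|x|)^\alpha$ for $\alpha<1$ sufficiently close to~$1$; almost all of the analytic content has already been packaged into Propositions~\ref{prop:poldecbdd} and~\ref{prop:quickerdec}, so the task here is essentially to check that the hypotheses of Proposition~\ref{prop:coolsasd} are met and then to unwind its conclusion.

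First I would produce $\alpha_0$ together with the key inequality \eqref{eq:aconvomegaoveromega} by a case split dictated by the definition of $\St[d]$. If $d=1$, then $b\in\St[1]=\S[1]$ and Proposition~\ref{prop:quickerdec} applies directly, since for $d=1$ it requires no extra decay hypothesis. If $d>1$ and $b$ is the inverse polynomial \eqref{eq:polynomialb}, I invoke Proposition~\ref{prop:poldecbdd}; if instead $b$ satisfies \eqref{eq:quicklydecreasing}, then, because $\S[d]\subset\S[1]$, I again invoke Proposition~\ref{prop:quickerdec}. In every case this yields an $\alpha_0\in(0,1)$, depending only on $b$ and $d$, such that $\upomega=b(|\cdot|)^\alpha$ satisfies \eqref{eq:aconvomegaoveromega} for all $\alpha\in(\alpha_0,1)$; this is the $\alpha_0$ claimed in the theorem, and it is independent of $\delta$.

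Next I would verify the remaining hypotheses of Proposition~\ref{prop:coolsasd} for this $\upomega$. Positivity of $\upomega$ and nonemptiness of the sublevel sets \eqref{eq:defofsetOmegala} follow from $b$ being positive and decreasing to $0$ on $\R_+$ (in the relevant cases $b$ is either the explicit positive polynomial or, as the proof of Proposition~\ref{prop:quickerdec} shows via the remark after Lemma~\ref{le:cruciallemma123}, lies in $\D$); this also gives $\Omega_\la=\{|x|>\rho_\la\}$ with $\rho_\la\to\infty$ as $\la\searrow0$. Boundedness $a\in L^\infty(\X)$ is immediate from \eqref{ass:newA5} and boundedness of $b^+\in\D$. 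The one genuinely computational point is $\lVert a\rVert_\upomega<\infty$: using the log-equivalence of $b^+$ and $b$ in the form \eqref{eq:asfdsafsadfdsewtewtegw}, for any $\epsilon\in(0,1-\alpha)$ one has $a(x)\leq b^+(|x|)\leq b(|x|)^{1-\epsilon}$ for large $|x|$, whence $a(x)/\upomega(x)\leq b(|x|)^{1-\epsilon-\alpha}\to0$, while on any ball the quotient stays bounded because $b$ is bounded below there; this is precisely the estimate \eqref{eq:safwwar}.

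Finally, Proposition~\ref{prop:coolsasd} supplies, for each $\delta\in(0,1)$, constants $c_\delta>0$ and $\la\in(0,1)$ with $a^{*n}(x)\leq c_\delta(1+\delta)^{n-1}\min\{\la,b(|x|)^\alpha\}$; since $(1+\delta)^{n-1}\leq(1+\delta)^n$, taking $c_1:=c_\delta$ gives the first displayed inequality. For \eqref{eq:KbRd} I would simply note that $b(|x|)^\alpha\to0$ as $|x|\to\infty$, so there is $s_\alpha$ with $b(|x|)^\alpha<\la$, hence $\min\{\la,b(|x|)^\alpha\}=b(|x|)^\alpha$, for $|x|>s_\alpha$; setting $c_2:=c_1$ completes the argument. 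The main obstacle is not any single estimate but the bookkeeping: making the case split exhaust $\St[d]$ with a single $\alpha_0$ independent of $\delta$, and converting the log-equivalence hypothesis into the power bound $b^+\leq b^{1-\epsilon}$ needed for $\lVert a\rVert_\upomega<\infty$.
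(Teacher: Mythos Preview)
Your proposal is correct and follows essentially the same route as the paper: the paper's proof is a two-line sketch that combines Propositions~\ref{prop:poldecbdd} and~\ref{prop:quickerdec} with Proposition~\ref{prop:coolsasd} and then uses the tail-decreasing property of $b$ to pass to \eqref{eq:KbRd}. You have simply unpacked this, making the case split on $\St[d]$ explicit and spelling out the verification of $\lVert a\rVert_\upomega<\infty$ via the log-equivalence bound \eqref{eq:asfdsafsadfdsewtewtegw}, which the paper leaves implicit in its reference to \eqref{eq:uoiomegalambda}.
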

\begin{proof}
Combining Proposition~\ref{prop:poldecbdd} and~\ref{prop:quickerdec} with Proposition~\ref{prop:coolsasd}, we get by 
\eqref{eq:new2} and \eqref{eq:uoiomegalambda} that there exist $\tilde{c_\delta}=\tilde{c_\delta}(\upomega)$, $\la=\la(\delta,\alpha)\in(0,1)$, where $\upomega(x)=b(|x|)^\alpha$, $x\in\X$, such that
\begin{equation}\label{eq:justadded}
a^{*n}(x)\leq \tilde{c}_\delta (1+\delta)^{n-1} \min\bigl\{\la,b(|x|)^\alpha\bigr\}, \quad x\in\X,
\end{equation}
that evidently yields \eqref{eq:justadded}. Since $b$ is tail-decreasing, we have that, for some $s_\alpha>0$, $b(|x|)^\alpha<\la$ for $|x|>s_\alpha$. This implies \eqref{eq:KbRd}.
\end{proof}

\begin{cor}\label{cor:mainRd}
Let $a(x)=b(|x|)$, $x\in\R$ for some $b\in \St[d]$. Then there exists $\alpha_0\in(0,1)$, such that, for any $\delta\in(0,1)$ and $\alpha\in(\alpha_0,1)$, there exist $c_{\delta,\alpha}>0$ and $s_\alpha=s_\alpha(\delta)>0$, such that, for all ,
\begin{equation*}
a^{*n}(x)\leq c_{\delta,\alpha} (1+\delta)^n a(x)^\alpha, \quad |x|>s_\alpha, n\in\N.
\end{equation*}
\end{cor}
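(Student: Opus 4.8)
The plan is to obtain the corollary directly from Theorem~\ref{thm:mainRd} by exhibiting an admissible dominating profile $b^+$ and then specializing to the radially symmetric case $a(x)=b(|x|)$. The only gap to bridge is that a function $b\in\St[d]\subset\S[d]$ is, by Definition~\ref{def:super-subexp}, merely \emph{tail}-decreasing (strictly decreasing to $0$ only on $[\rho,\infty)$, with $\rho=\rho_b$), whereas hypothesis \eqref{ass:newA5} of Theorem~\ref{thm:mainRd} requires a dominating $b^+\in\D$, i.e.\ a function strictly decreasing to $0$ on \emph{all} of $\R_+$. Hence the first step is to manufacture such a $b^+$ out of $b$.

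First I would repair $b$ on the compact piece $[0,\rho]$ while leaving its tail untouched. Concretely, set $b^+(s):=b(s)$ for $s\ge\rho$, and on $[0,\rho)$ choose any continuous, strictly decreasing function that exceeds $\sup_{t\in[0,\rho]}b(t)$ at $s=0$, matches the value $b(\rho)$ at $s=\rho$, and dominates $b$ pointwise on $[0,\rho)$; such a function exists because $b$ is bounded. By construction $b^+$ is bounded and strictly decreasing to $0$ on $\R_+$, and since $b^+=b$ on $[\rho,\infty)$ one has $\int_0^\infty b^+(s)s^{d-1}\,ds<\infty$: the integral over $[0,\rho]$ is finite by boundedness, while the tail integral is finite because $b\in\S[d]$ satisfies \eqref{eq:finiteintd} with $n=d$. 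Thus $b^+\in\D$ in the sense of Definition~\ref{def:Dtclass}. Moreover $b^+$ coincides with $b$ on the tail, so it is log-equivalent to $b$ in the sense of Definition~\ref{def:logeqv}, and $a(x)=b(|x|)\le b^+(|x|)$ for all $x$, i.e.\ \eqref{ass:newA5} holds. Equivalently, one may obtain $b^+$ via the $\breve{b}$-modification of Theorem~\ref{thm:coolthm}, choosing the auxiliary function on $[0,s_0)$ (with $s_0\ge\rho$) to be decreasing and to dominate $b$ there.

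With $b^+\in\D$ log-equivalent to $b\in\St[d]$ and $a(x)\le b^+(|x|)$ in hand, I would then simply invoke Theorem~\ref{thm:mainRd}: it furnishes $\alpha_0\in(0,1)$ and, for every $\delta\in(0,1)$ and $\alpha\in(\alpha_0,1)$, constants $c_2=c_2(\delta,\alpha)>0$ and $s_\alpha=s_\alpha(\delta)>0$ such that $a^{*n}(x)\le c_2(1+\delta)^n b(|x|)^\alpha$ for all $|x|>s_\alpha$ and $n\in\N$, which is precisely \eqref{eq:KbRd}. Finally, since here $a(x)=b(|x|)$ identically, we have $b(|x|)^\alpha=a(x)^\alpha$, so setting $c_{\delta,\alpha}:=c_2$ rewrites the bound as $a^{*n}(x)\le c_{\delta,\alpha}(1+\delta)^n a(x)^\alpha$ for $|x|>s_\alpha$, which is the assertion.

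I do not expect a genuine obstacle here: the statement is just the specialization of Theorem~\ref{thm:mainRd} to the radially symmetric case $a(x)=b(|x|)$, and the whole content of the proof is the elementary remark that a bounded tail-decreasing $b$ can be dominated by a globally decreasing, tail-identical (hence log-equivalent) function $b^+\in\D$. The one point that needs a (routine) verification is that this $b^+$ indeed lies in $\D$, in particular that its finite $(d-1)$-moment is inherited from $b\in\S[d]$; after that check the corollary follows by substituting $a(x)^\alpha$ for $b(|x|)^\alpha$ in \eqref{eq:KbRd}.
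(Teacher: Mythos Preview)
Your proposal is correct and follows essentially the same route as the paper: construct a globally decreasing $b^+\in\D$ that agrees with $b$ on $[\rho,\infty)$ and dominates it on $[0,\rho]$, then invoke Theorem~\ref{thm:mainRd} and use $a(x)=b(|x|)$ to rewrite $b(|x|)^\alpha$ as $a(x)^\alpha$. The paper's proof is the same argument stated in two sentences; you have simply made the construction and the verification that $b^+\in\D$ explicit.
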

\begin{proof}
Since, for some $\rho>0$, $b$ is decreasing on $(\rho,\infty)$ and \eqref{eq:finiteintd} holds, there exists $b^+\in\D$, such that $b^+(s)=b(s)$, $s>\rho$ and $b^+(s)\geq b(s)$, $s\in[0,\rho]$. Then one can apply Theorem~\ref{thm:mainRd}.   
\end{proof}  

\section*{Appendix: Non-local heat equation}

\setcounter{equation}{0}
\renewcommand{\theequation}{A.\arabic{equation}}

We can apply the obtained results to the study 
of the regular part of the fundamental solution to the non-local heat equation
\begin{equation}\label{eq:nlheat}
\frac{\partial}{\partial t}u(x,t)= \ka\int_\X a(x-y)\bigl(u(y,t)-u(x,t)\bigr)\,dy, \quad x\in\X,
\end{equation}
where $\ka>0$ and $0\leq a\in L^1(\X)\cap L^\infty(\X)$ is normalized, i.e.~$\int_\X a(x)\,dx=1$; see e.g. \cite{AMRT2010,BCF2011,KMPZ2016}.  
Consider an initial condition $u(x,0)=u_0(x)$, $x\in\X$ to \eqref{eq:nlheat} with $u_0$ from a space $E$ of bounded on $\X$ functions. Since the operator $Au=\ka a*u - \ka u$ in the right hand side of \eqref{eq:nlheat} is bounded on $E$, the unique solution to \eqref{eq:nlheat} is given by 
\begin{equation}\label{eq:fullsol}
u(x,t)=e^{-\ka t}\bigl((\delta_0+\phi_\ka(t))*u_0\bigr)(x),
\end{equation} 
where $\delta_0$ is the Dirac delta at $0\in\X$ and 
\begin{equation}\label{eq:regpartfundsol}
\phi_\ka(x,t):=\sum_{n=1}^\infty \frac{\ka^n t^n}{n!}a^{*n}(x), \quad x\in\X, t\geq0.
\end{equation}
Note that it was shown in \cite[Lemma 2.2]{CCR2006}, that if $a$ is a rapidly decreasing smooth function, then $\phi_k$ is indeed the solution to \eqref{eq:nlheat} with $u_0=\delta_0$. 

Now, for $d=1$, suppose that $a(x)=b(x)$, $x\in\R^1$, and $b$ satisfies the conditions of Theorem~\ref{thm:KestenonR}. Then, by \eqref{eq:Kb1}, the series in \eqref{eq:regpartfundsol} converges uniformly on finite time intervals for each $x> s_0$, and therefore, by \eqref{eq:subexpproperty},
\begin{equation*}
  \phi_\ka(x,t)\sim kt e^{\ka t} a(x), \quad x\to\infty, \ t>0.
  \end{equation*}

For $d>1$, let $a$ and $b$ satisfy the conditions of Theorem~\ref{thm:mainRd}. Then, for each $\delta>0$ and for each $\alpha<1$ close enough to $1$, 
\begin{equation}\label{eq:estfundsol}
  \phi_\ka(x,t)\leq c_{\delta,\alpha} \bigl(e^{\ka t(1+\delta)}-1\bigr)b(|x|)^\alpha, \quad |x|>s_{\alpha}, \ t>0 
\end{equation}
for some $c_{\delta,\alpha}>0$ and $s_{\alpha}=s_{\alpha}(\delta)>0$. In particular, if $a$ is radially symmetric and the conditions of Corollary~\ref{cor:mainRd} hold, then one can replace $b(|x|)$ on $a(x)$ in \eqref{eq:estfundsol}. 

Moreover, combining \eqref{eq:justadded} with \eqref{eq:fullsol}, one can get an estimate for the solution $u$ to \eqref{eq:nlheat} as well.
The further analysis of solutions to \eqref{eq:nlheat} can be found in \cite{FKT2016, FT2017c}.

\acks
Authors gratefully acknowledge the financial support by the DFG through CRC 701 ``Stochastic
Dynamics: Mathematical Theory and Applications'' (DF~and~PT), the European
Commission under the project STREVCOMS PIRSES-2013-612669 (DF), and the ``Bielefeld Young Researchers'' Fund through the Funding Line Postdocs:
``Career Bridge Doctorate\,--\,Postdoc'' (PT).

\end{document}